\title[]{Infinitesimal Zariski closures of positive representations}
\author[]{Andr\'es Sambarino}
\thanks{The author was partially financed by ANR DynGeo ANR-16-CE40-0025}
\date{}
\renewcommand*{\backref}[1]{}
\renewcommand*{\backrefalt}[4]{\quad \tiny
  \ifcase #1 (\textbf{NOT CITED.})%
  \or    (Cited on page~#2.)%
  \else   (Cited on pages~#2.)%
  \fi}
\def\MRbibitem{\@ifnextchar[\my@lbibitem\my@bibitem}
\def\mybiblabel#1#2{\@biblabel{{\hyperref{http://www.ams.org/mathscinet-getitem?mr=#1}{}{}{#2}}}}
\def\myhyperanchor#1{\Hy@raisedlink{\hyper@anchorstart{cite.#1}\hyper@anchorend}}
\def\my@lbibitem[#1]#2#3#4\par{%
  \item[\mybiblabel{#2}{#1}\myhyperanchor{#3}\hfill]#4%
  \@ifundefined{ifbackrefparscan}{}{\BR@backref{#3}}%
  \if@filesw{\let\protect\noexpand\immediate
    \write\@auxout{\string\bibcite{#3}{#1}}}\fi\ignorespaces%
}
\def\my@bibitem#1#2#3\par{%
  \refstepcounter\@listctr
  \item[\mybiblabel{#1}{\the\value\@listctr}\myhyperanchor{#2}\hfill]#3%
  \@ifundefined{ifbackrefparscan}{}{\BR@backref{#2}}%
  \if@filesw\immediate\write\@auxout
    {\string\bibcite{#2}{\the\value\@listctr}}\fi\ignorespaces%
}
\newcommand{\xqedhere}[2]{%
  \rlap{\hbox to#1{\hfil\llap{\ensuremath{#2}}}}}
\newcommand{\Z}{\mathbb{Z}} 
\newcommand{\R}{\mathbb{R}}
\newcommand{\C}{\mathbb{C}}
\newcommand{\N}{\mathbb{N}}
\renewcommand{\P}{\mathbb{P}}
\newcommand{\T}{\mathbb{T}}
\DeclareMathOperator{\inv}{inv}
\newcommand{\lb}{\llbracket}
\newcommand{\rb}{\rrbracket}
\newcommand{\eps}{\varepsilon}
\newcommand{\G}{\sf{\Gamma}}    
\newcommand{\<}{\langle}
\renewcommand{\>}{\rangle}
\newcommand{\g}{\gamma}
\newcommand{\bord}{\partial}
\newcommand{\posgen}{\cal F^{(2)}}
\newcommand{\posF}{{\cal F}_{>0}}
\newcommand{\vt}{\vartheta}
\newcommand{\p}{{\mathsf{f}}}
\newcommand{\wk}{\check}
\renewcommand{\aa}{\alpha}
\newcommand{\bb}{\beta}
\newcommand{\grupo}{\Lambda} 
\newcommand{\Bcone}{{\cal L}}
\newcommand{\LL}{\mathrm{L}}
\newcommand{\diag}{{\mathrm{diag}}}
\newcommand{\scr}{\mathscr}
\renewcommand{\sf}[1]{{\mathsf{#1}}}
\newcommand{\cal}{\mathcal}
\renewcommand{\frak}{\mathfrak}
\newcommand{\Lim}[1]{\mathbf{L}_{#1}}
\newcommand{\A}{{\sf A}}
\newcommand{\B}{{\sf B}}
\newcommand{\Ce}{{\sf C}}
\newcommand{\D}{{\sf D}}
\newcommand{\EE}{{\sf E}}
\newcommand{\F}{{\sf F}}
\newcommand{\Ge}{{\sf G}}
\newcommand{\peso}{\varpi}
\newcommand{\om}{\epsilon}
\newcommand{\yy}{\sf{y}}
\newcommand{\xx}{\sf{x}}
\newcommand{\y}{y}
\newcommand{\x}{x}
\newcommand{\hh}{h}
\newcommand{\ee}{e}
\renewcommand{\t}{\vartheta}
\newcommand{\centro}{{\frak Z}}
\DeclareMathOperator{\level}{level}
\newcommand{\fund}{{\phi}}
\newcommand{\Fund}{{\bar{\phi}}}
\newcommand{\mapa}{\Phi}
\newcommand{\Borel}{B}
\newcommand{\upupsilon}{h}
\renewcommand{\b}{{\frak b}}
\newcommand{\n}{{\frak n}}
\newcommand{\gl}{{\frak{gl}}}
\renewcommand{\sl}{\frak{sl}}
\renewcommand{\ge}{{\frak g}}
\renewcommand{\l}{{\frak l}}
\newcommand{\e}{{\frak e}}
\newcommand{\f}{{\frak f}}
\renewcommand{\a}{{\frak a}}
\newcommand{\h}{{\frak h}}
\renewcommand{\p}{{\frak p}}
\renewcommand{\k}{{\frak k}}
\newcommand{\so}{\frak{so}}
\renewcommand{\sp}{\frak{sp}}
\newcommand{\PP}{\hom_{\gtrsim}}
\newcommand{\Weyl}{\cal W}
\newcommand{\simple}{{\sf\Delta}}
\renewcommand{\root}{{\sf\Phi}}
\newcommand{\poids}{{\sf\Pi}}
\newcommand{\sroot}{{\sigma}}
\newcommand{\hasse}[2]{\operatorname{\mathcal{H}}^{#1}_{#2}}
\newcommand{\jordan}{\lambda}
\DeclareMathOperator{\Int}{Int}
\DeclareMathOperator{\isom}{Isom}
\DeclareMathOperator{\ii}{i}
\DeclareMathOperator{\spa}{span}
\DeclareMathOperator{\SL}{{\mathsf{SL}}}
\DeclareMathOperator{\PSL}{{\mathsf{PSL}}}
\DeclareMathOperator{\GL}{{\mathsf{GL}}}
\DeclareMathOperator{\SO}{{\mathsf{SO}}}
\DeclareMathOperator{\PGL}{{\mathsf{PGL}}}
\DeclareMathOperator{\PSO}{{\mathsf{PSO}}}
\DeclareMathOperator{\id}{id}
\DeclareMathOperator{\inte}{int}
\DeclareMathOperator{\lie}{Lie}
\DeclareMathOperator{\ad}{ad}
\DeclareMathOperator{\Ad}{Ad}
\DeclareMathOperator{\Rad}{Rad}
\DeclareMathOperator{\rk}{rank}
\DeclareMathOperator{\Fix}{Fix}
\DeclareMathOperator{\het}{ht}
\newcommand{\auto}{\varkappa}
\newcommand{\circulo}{\partial X_\G}
\newcommand{\gripe}{X}
\newcommand{\base}{{\bf B}}
\newcommand{\mult}{{\sf m}}
\newcommand{\pin}{\mathrm{O}}
\newcommand{\BC}{\vec{\cal E}}
\newcommand{\cartan}{o}
\newcommand{\grass}{\mathrm{Gr}}
\newcommand{\Wedge}{{\bar{\psi}}}  
\DeclareMathOperator{\Sp}{{\sf{Sp}}}
\DeclareMathOperator{\PSp}{{\sf{PSp}}}
\newcommand{\bpm}{\begin{pmatrix}}
\newcommand{\epm}{\end{pmatrix}}
\numberwithin{equation}{section}     
\setlist[enumerate,1]{label = {\upshape(\roman*)},ref = \roman*}
\setlist[enumerate,2]{label = {\upshape(\alph*)},ref = \alph*}
\newcommand{\cev}[1]{\reflectbox{\ensuremath{\vec{\reflectbox{\ensuremath{#1}}}}}}
\newcommand{\CB}{\cev{\cal E}}
\newtheorem{thmA}{Theorem}
\newtheorem{thm}{Theorem}[section]
\newtheorem{cor}[thm]{Corollary}
\newtheorem{lemma}[thm]{Lemma}
\newtheorem{prop}[thm]{Proposition}
\theoremstyle{definition}
\newtheorem{defi}[thm]{Definition}
\newtheorem{ex}[thm]{Example}
\theoremstyle{remark}
\newtheorem{obs}[thm]{Remark}
\newtheorem*{ack}{Acknowledgements}
\begin{document}

\begin{abstract}We classify the (semi-simple parts of the) Lie algebra of the Zariski closure of a discrete subgroup of a split simple real-algebraic Lie group, whose limit sets are minimal and such that the limit set in the space of full flags contains a positive triple of flags (as in Lusztig \cite{Lusztig-TP}). We then apply our result to obtain a new proof of Guichard's classification \cite{clausura} of Zariski closures of Hitchin representations into $\PSL_d(\R).$
\end{abstract}

\maketitle

\tableofcontents

\section{Introduction}\label{intro}

Let us consider the vector space $\R^d$ equipped with its canonical ordered basis $\cal E=\{e_1,\ldots,e_d\}$ and let $\GL_d(\R)$ be the group of invertible matrices. A \emph{minor} of $g\in\GL_d(\R)$ is the determinant of a square matrix obtained from $g$ by deleting some lines and columns from it. Minors appear naturally when one considers the exterior powers of $\R^d.$ Indeed, theses spaces carry also a natural basis $$\wedge^k\cal E=\{e_{i_1}\wedge\cdots\wedge e_{i_k}:i_1<\cdots<i_k\}$$ defined from $\cal E,$ and the coefficients of $\wedge^kg$ in this basis are the $k\times k$ minors of $g.$

As introduced by Schoenberg \cite{Schoenberg} and Gantmacher-Krein \cite{GK}, a matrix is \emph{totally positive} if all its minors are positive\footnote{Let us convene throughout the paper that $0$ is not a positive real number.}. If $g\in\GL_d(\R)$ is such a matrix, then, since all its entries are positive, it preserves the sharp convex cone of $\R^d$ $$\scr C_{\cal E}=\{(x_1,\ldots,x_d):x_i\geq0\},$$ consisting on vectors all of whose entries in $\cal E$ are non-negative. By the preceding paragraph more is true: the same holds for every exterior power of $g$, $$(\wedge^kg)(v_1\wedge\cdots\wedge v_k)=gv_1\wedge\cdots \wedge gv_k,$$ replacing $\cal E$ by $\wedge^k\cal E.$  

An application of the classical Perron-Frobenius Theorem implies then that $\wedge^kg$ has a unique attracting fixed line in the interior of this cone, \begin{equation}\label{encono}g_{+,k}\in\inte\scr C_{\wedge^k\cal E},\end{equation} and the collection $(g_{+,k})_1^d$ is an attracting complete flag\footnote{Recall that a \emph{complete flag} of $\R^d$ is a sequence of vector subspaces $(V_i)_1^d$ such that $\dim V_i=i$ and $V_i\subset V_{i+1}.$} of $g.$ If we denote by $\BC$ the complete flag $$\BC=(\spa(e_1\oplus\cdots\oplus e_k)\big)_1^d$$ then the inclusion (\ref{encono}) readily implies that the \emph{lower triangular matrix} $\wk u_g$ sending $\BC$ to $(g_{+,k})$ has positive minors (except those that are forced to be zero by the virtue of being lower triangular). Such a semi-group will be denoted by $\wk{U}_{>0}.$ If one is more familiar with \emph{upper triangular matrices} then one should replace $\BC$ by $\CB=(\spa(e_d\oplus\cdots\oplus e_{d-k+1})\big)_1^d$ to obtain an analogous $U_{>0}.$ The subspace of \emph{positive flags} is then defined by $$\cal F_{>0}=\wk{U}_{>0}\cdot\BC= U_{>0}\cdot\CB.$$ The pair of flags $(\BC,\CB)$ uniquely determines the (ordered) decomposition $\R^d=\bigoplus_{e\in\cal E}\R e$, so $\cal F_{>0}$ is actually defined by the pair $(\BC,\CB)$ and \emph{a pinning} (see \S\,\ref{total>0}).


The above (very quick) picture has been generalized to the real points of an arbitrary (Zariski-connected) reductive split real-algebraic group $\sf G$ by Lusztig \cite{Lusztig-TP}. We refer the reader to \S \ref{>0} for the precise definitions and we reuse the notation $\cal F_{\sf G}=\cal F$ as the complete flag space of $\sf G$ and $\cal F_{>0}$ for the subset of positive flags associated to a pair of fixed opposite Borel subgroups $\Borel$ and $\wk{\Borel}$ (and a pinning). Let us say that a triple of pairwise transverse flags $(x,y,z)$ is \emph{positive}, if there exists $g\in\sf G$ such that $g\cdot x=[\wk{\Borel}],$ $g\cdot z=[\Borel]$ and $g\cdot y\in\cal F_{>0}.$

Le us consider more generally a \emph{partial flag} $\cal F_\theta$ of $\sf G,$ these are indexed by subsets of the set of simple roots $\simple,$ with $\cal F_\simple=\cal F.$  An element $g\in\sf G$ is \emph{proximal on} $\cal F_\theta$ if it has an attracting fixed point on $\cal F_\theta,$ i.e.  there exists $g_{+,\theta}\in\cal F_\theta$ fixed by $g$ and an open neighborhood $V$ of $g_{+,\theta}$ such that $g\overline V\subset\inte V.$ In this situation one has $\bigcap_{n\in\N} g^nV=\{g_{+,\theta}\}.$ Elements that are proximal on $\cal F$ are often called \emph{purely loxodromic}.

If $\grupo<\sf G$ is a discrete subgroup then its \emph{limit set on} $\cal F_\theta$ is defined as $$\Lim{\grupo,\theta}=\overline{\{g_{+,\theta}:g\in\grupo\textrm{ proximal on $\cal F_\theta$}\}}\subset\cal F_\theta.$$ A result by Benoist\footnote{(that holds when $\sf G$ is an arbitrary reductive real-algebraic Lie group of non-compact type)} \cite{limite} asserts that if $\grupo$ is Zariski dense, then $\Lim{\grupo,\theta}$ is non-empty and contained in any closed non-empty $\grupo$-invariant set. We will assume a slightly weaker version of this property. Let us say that $\Lim{\grupo,\theta}$ is \emph{minimal} if the only closed $\grupo$-invariant subsets of $\Lim{\grupo,\theta}$ are either the empty set or $\Lim{\grupo,\theta}$ itself. 


\begin{defi}Let $\grupo<\sf G$ be a discrete group. We say that\begin{itemize}\item[-] $\grupo$ has \emph{minimal limit sets} if $\Lim{\grupo,\{\sroot\}}$ is minimal for every $\sroot\in\simple,$ \item[-]$\Lim{\grupo,\simple}$ \emph{contains a positive loxodromic triple} if there exists $g_0\in\grupo$ proximal on $\cal F$ and $x_0\in\Lim{\grupo,\simple}$ such that $(g_+,x_0,g_-)$ is a positive triple.\end{itemize}\end{defi}

Recall that a reductive Lie algebra $\h$ splits as the sum $\h=\h_{ss}\oplus \centro(\h)$ where $\centro(\h)$ is its center and $\h_{ss}=[\h,\h]$ is semi-simple. Recall also that, as $\ge$ is split, it contains a special conjugacy class of sub-algebras isomorphic to $\sl_2(\R)$ called \emph{the principal $\sl_2(\R)$'s}, see \S\ref{xalpha} for the definition.

The main purpose of this paper is to prove the following.

\begin{thmA}\label{tA}Let $\sf G$ be the real points of a Zariski connected, simple split, real-algebraic group and $\grupo<\sf G$ a subgroup with reductive Zariski closure $\sf H,$ minimal limit sets and such that $\Lim{\grupo,\simple}$ contains a positive loxodromic triple. Then $\h_{ss}$ is either $\ge,$ a principal $\sl_2(\R),$ or $\Int\ge$-conjugated to one of the possibilities listed in Table \ref{unesco}.
\end{thmA}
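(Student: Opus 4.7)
My plan is a two-step reduction. First, show the hypotheses force $\h_{ss}$ to contain a principal $\sl_2(\R)$ of $\ge$. Second, classify the semi-simple subalgebras of the simple split $\ge$ containing a principal $\sl_2$ and match the resulting list with Table~\ref{unesco}.

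For the first step, after conjugating inside $\sf G$ (which preserves the hypotheses and only modifies $\h$ up to $\Int\ge$-conjugacy) I normalize the positive loxodromic triple to $(g_+,x_0,g_-)=([\wk\Borel],x_0,[\Borel])$ with $x_0\in\posF$. Then $g_0$ lies in the split torus $T=\Borel\cap\wk\Borel$, and being proximal on $\cal F$ it satisfies $\jordan(g_0)\in\inte(\a^+)$. Since $\sf H$ is algebraic, the element $\log g_0\in\h\cap\inte(\a^+)$ is already a Cartan direction of the right type. To promote this to a principal $\sl_2\subset\h$, I would combine (a) Lusztig's positive structure, which represents $x_0=\exp(X)\cdot g_\pm$ with every simple-root coordinate of $X\in\u$ nonzero, and (b) minimality of $\Lim{\grupo,\{\sroot\}}$ for each $\sroot\in\simple$, which forces the $\sf H$-orbit closure of $x_0$ to be large enough to cover each simple flag variety in its relevant basin of attraction for $g_0$. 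These two facts together imply that the infinitesimal action of $\h$ at $g_+$, viewed inside $T_{g_+}\cal F\simeq\ge/\wk\b$, has image meeting every simple-root line; equivalently, $\h$ contains a nilpotent element $e$ with nonzero component in every simple root space, i.e.\ a principal nilpotent of $\ge$. Jacobson--Morozov applied inside $\h$ to $e$ then yields the desired principal $\sl_2(\R)\subset\h_{ss}$.

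For the second step, the $\ad$-action of a principal $\sl_2\subset\ge$ decomposes $\ge$ into irreducibles of odd dimensions $2m_i+1$ indexed by the exponents $m_i$ of $\ge$, and any intermediate semi-simple subalgebra $\h_{ss}$ must be compatible with this weight decomposition. The resulting classification of semi-simple subalgebras of a simple split $\ge$ containing a principal $\sl_2$ is finite and essentially classical, due to Dynkin; combined with the additional dynamical compatibility (minimal limit sets carrying a positive triple) exactly the entries of Table~\ref{unesco} survive, together with the two extreme cases $\ge$ and the principal $\sl_2$ itself.

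The main obstacle is the first step, and more precisely the production of the principal nilpotent in $\h$: positivity at a single flag $x_0$ and minimality of a single orbit are separately insufficient, and one must combine them with the Zariski closedness of $\sf H$ to obtain a Lie-algebraic object with simultaneous nontriviality in every simple-root direction. Once this principal $\sl_2\subset\h$ is secured, the remaining classification, although still delicate in the exceptional types, becomes a computation inside root systems.
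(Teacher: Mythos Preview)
Your two-step plan is quite different from the paper's route, and Step~1 has a genuine gap that you already sense. From ``the image of $\h\to\ge/\wk\b$ meets every simple-root line'' you cannot conclude that $\h$ contains a principal nilpotent of $\ge$: a generic $x\in\h$ may have nonzero projection onto every $\ge_\sroot$ after reducing mod~$\wk\b$, but there is no reason the $\n$-part of $x$ lies in $\h$, and what you actually need is an element of $\h\cap\n$ (in fact a root vector of $\h_{ss}$ itself) with nonzero component in every $\ge_\sroot$. Neither the positivity of $x_0$ nor the minimality of the individual $\Lim{\grupo,\{\sroot\}}$ gives this directly: positivity of $x_0$ concerns a unipotent of $\sf G$, not of $\sf H$, and minimality only controls orbit closures in each $\cal F_{\{\sroot\}}$, not the position of $\h\cap\n$ inside $\n$. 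The paper does establish exactly this statement in the rank-one case (Corollary~\ref{rg=1}), but only \emph{after} building substantial machinery; the claimed ``equivalently'' in your sketch is where the argument breaks.

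That machinery is also where the paper diverges from you. Rather than aiming for a principal $\sl_2$, the paper shows that positivity plus minimality make $\grupo$ \emph{totally coherent}: for each $\Fund_\sroot$, Lusztig's canonical basis (simply-laced case, then descent) forces the second eigenline of $g_0$ to lie in $\spa\Lim{\Fund_\sroot(\grupo)}^\P$. Coherence then produces a surjection $f:\simple_\ge\to\simple_\h$ together with, for every extremal $\aa$, a surjective level- and label-preserving map of Hasse diagrams $\hasse{\ge}{\peso_\aa}\to\hasse{\h}{\peso_{f(\aa)}}$, and simultaneously proves that $\h_{ss}$ is simple and split. Table~\ref{unesco} is then the outcome of a purely combinatorial classification of such Hasse-diagram surjections (Lemma~\ref{diagramasClasif}). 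Your Step~2 is correct in spirit---the simple split subalgebras of $\ge$ through which a principal $\sl_2$ factors are indeed those in the table---but to invoke it you would still owe independent arguments that $\h_{ss}$ is simple and split, and that the $\sl_2$ you produced is principal in $\h_{ss}$ and not merely in $\ge$, none of which your outline supplies.
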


We would like to stress the fact that only one positive (loxodromic) triple in the limit set $\Lim{\grupo,\simple}$ is required.

\begin{table}[h!]
  \begin{center}
    \begin{tabular}{r|r|r} 
      $\ge$ & $\h_{ss}$ & $\phi:\h_{ss}\to\ge$  \\
      \hline
     $\sl_{2n}(\R)$ & $\sp(2n,\R)$ & defining representation \\\hline
      \multirow{2}{*}{$\sl_{2n+1}(\R)$} & $\so(n,n+1)$ $\forall n$ & defining representation \\
       & $\ge_2$ if $n=3$ & fundamental for the short root\\\hline
      $\so(3,4)$ & $\ge_2$ & fundamental for the short root \\\hline
      \multirow{4}{*}{$\so(n,n)$} & $\so(n-1,n)$ $\forall n\geq3$ & stabilizer of a non-isotropic line \\
       & $\so(3,4)$ if $n=4$ & fundamental for the short root \\\cline{3-3}
       & \multirow{2}{*}{$\ge_2$ if $n=4$} & stabilizes a non-isotropic line $L$ and is  \\ & & fundamental for the short root on $L^\perp$\\\hline 
      $\e_6$ & $\f_4$ & $\Fix(\inv_0)$ (see Example \ref{Inv})
    \end{tabular}
    \caption{The statement of Theorem \ref{tA}, if a simple split algebra $\ge$ is not listed in the first column then the only possibilities for $\h_{ss}$ are $\ge$ or a principal $\sl_2(\R).$ The notations $\e_6,\f_4$ and $\ge_2$ refer to the split real forms of the corresponding exceptional complex Lie algebras. 
Observe that there are two non $\Int\so(n,n)$-conjugated embeddings $\so(n,n-1)\to\so(n,n)$ that stabilize a non-isotropic line.} \label{unesco}
  \end{center}
\end{table}

%
%
%
%
The use of Lusztig's positivity to study discrete groups seems to have originated in Fock-Goncharov's \cite{FG} work, where the notion of \emph{positive representation} of a surface group was introduced. A similar approach simultaneously originated in Labourie \cite{labourie}. Both works focus on understanding a special connected component of the character variety $\frak X(\pi_1S,\sf G)=\hom(\pi_1S,\sf G)/\sf G,$ for a closed connected orientable surface $S$ of genus $\geq2$ and a center-free split simple group $\sf G,$ introduced by Hitchin \cite{hitchin}. These \emph{Hitchin components} are defined as those components that contain a discrete and faithful representation $\pi_1S\to\sf G$ such that the Zariski closure of $\rho(\pi_1S)$ is a principal $\PSL_2(\R)$ in $\sf G.$

Combining \cite{FG} and \cite{labourie}, together with Guichard \cite{guichard}, one has the following geometric characterization of Hitchin representations. Recall that the Gromov boundary of $\pi_1S$ is homeomorphic to a circle and carries a $\pi_1S$-invariant cyclic order.

\begin{thm}[{\cite{FG,guichard,labourie}}]\label{HP} A representation $\rho:\pi_1S\to\sf G$ lies in a Hitchin component if and only if there exists a continuous equivariant map $\xi:\partial\pi_1S\to\cal F$ sending cyclically ordered triples to positive triples of flags.\end{thm}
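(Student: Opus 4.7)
The plan is to prove both directions through an open-closed connectedness argument in $\hom(\pi_1 S,\sf G)$, anchored at a principal Fuchsian base representation.

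\textbf{Forward direction.} Let $\rho_0=\tau\circ j$ be the composition of a Fuchsian representation $j:\pi_1 S\to\PSL_2(\R)$ with the principal embedding $\tau:\PSL_2(\R)\to\sf G$. The unique closed $\tau(\PSL_2(\R))$-orbit in $\cal F$ yields a continuous equivariant map $\xi_0:\partial\pi_1 S\to\cal F$. Since the one-parameter subgroups $\exp(te_\sroot),\exp(tf_\sroot)$ generating the principal $\sl_2(\R)$ are, by Lusztig's definition, precisely the generators of the totally positive unipotent semigroup, one checks directly that $\xi_0$ sends cyclically ordered triples of $\partial\HH^2$ to positive triples in $\cal F$. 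I then consider the set
\[\cal P=\{\rho\in\hom(\pi_1 S,\sf G):\textrm{ there exists a continuous equivariant triple-positive }\xi_\rho\}\]
and argue that $\cal P$ is open and closed. Openness combines the fact that triple-positivity is an open condition with a stability statement: triple-positivity will be shown below to imply the $\Borel$-Anosov property, and perturbed boundary maps remain $C^0$-close, hence still positive on cyclic triples. Closedness uses Arzelà–Ascoli together with the observation that a limit of positive triples is positive as soon as pairwise transversality persists, which is guaranteed by Anosovness passing to limits. Since $\cal P$ is a union of connected components containing $\rho_0$, it contains the entire Hitchin component.

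\textbf{Backward direction.} Assume $\rho$ admits a positive equivariant $\xi:\partial\pi_1 S\to\cal F$. The main task is to prove that $\rho$ is $\Borel$-Anosov. Positivity of triples forces pairwise transversality of $\xi$, and for any non-trivial $\g\in\pi_1 S$ with Gromov fixed points $\g^\pm$, I would apply positivity to the triples $(\xi(\g^-),\xi(t),\xi(\g^+))$ for $t$ varying, and exploit a Perron-Frobenius type contraction inside the totally positive unipotent semigroup (extending to split $\sf G$ the classical estimate recalled in the introduction for $\GL_d(\R)$). The conclusion is that $\rho(\g)$ is proximal on every partial flag space $\cal F_{\{\sroot\}}$ with attractor $\xi(\g^+)$, and that the associated singular-value gaps grow linearly in word length. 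This is Labourie's contraction characterization of $\Borel$-Anosov representations, so $\rho$ is discrete and faithful, and belongs to $\cal P$. Re-running the open-closed argument of the forward direction, the connected component of $\rho$ meets $\cal P$, hence coincides with it, and therefore contains $\rho_0$; it is thus a Hitchin component.

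\textbf{Main obstacle.} The delicate step is the Anosov contraction estimate in the backward direction: deducing, from the purely algebraic triple-positivity hypothesis, a genuine dynamical uniform contraction of $\rho(\g^n)$ toward $\xi(\g^+)$ on each $\cal F_{\{\sroot\}}$. The tool I would use is Lusztig's positive parametrization of $U_{>0}$, together with the Chevalley-type formulas for the action on fundamental representations, to upgrade the signs appearing in the triple-positivity condition into quantitative estimates on matrix coefficients (exactly paralleling, on each fundamental $\sf G$-module, the Perron-Frobenius estimate sketched around \eqref{encono}). Once this contraction is established, the equivalences of Labourie and Guichard-Wienhard package the remaining deductions, and the open-closed-connected scheme closes the proof.
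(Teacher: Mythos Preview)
The paper does not prove Theorem~\ref{HP}; it is quoted as a combination of results from \cite{FG}, \cite{guichard}, and \cite{labourie} and used as a black box (notably in \S\ref{gr} to deduce Corollary~\ref{gui}). There is therefore no ``paper's own proof'' to compare your proposal against.

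That said, your sketch has a genuine gap in the forward direction. You argue that $\cal P$ is closed by appealing to ``Anosovness passing to limits,'' but the set of Anosov representations is open and \emph{not} closed in $\hom(\pi_1 S,\sf G)$: a sequence of Anosov representations can degenerate to a non-Anosov (even non-discrete) one, and the associated boundary maps need not converge. So the Arzel\`a--Ascoli step fails without an a~priori equicontinuity bound, which you have not supplied. In the actual literature the forward direction is handled quite differently: Labourie \cite{labourie} proves directly, via the Higgs bundle parametrization and the harmonic map, that \emph{every} representation in the Hitchin component is Anosov (hyperconvex), bypassing any open--closed argument; Fock--Goncharov \cite{FG} establish positivity via cluster coordinates on the moduli space. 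Neither route is a naive open--closed scheme of the kind you describe.

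Your backward sketch is closer in spirit to Guichard \cite{guichard}, where one shows that a continuous equivariant positive (hyperconvex) curve forces the Anosov property. The Perron--Frobenius-type contraction you allude to is the right idea, but the passage from proximality of each $\rho(\g)$ to a \emph{uniform} linear gap in word length (i.e.\ the Anosov condition) requires an additional compactness/uniformity argument that you have only named, not carried out.
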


In this paper we deal with a weaker notion than the one required in the above result. We replace $\pi_1S$ with any discrete group acting on a Gromov-hyperbolic space and relax the ``order preserving'' condition. 

If $\gripe$ is a proper Gromov-hyperbolic space and $\G<\isom(\gripe)$ is a discrete subgroup, then we denote by $\circulo$ its limit set on the visual boundary of $\gripe.$ It is a compact $\G$-invariant subset and $\G$ is \emph{non-elementary} if $\circulo$ contains at least 3 points. If this is the case, $\circulo $ is characterized by being the smallest non-empty $\G$-invariant closed subset of $\bord \gripe$, and $\G$ necessarily contains a non-abelian free subgroup. We refer the reader to Ghys-de la Harpe \cite[Chapitre 8]{ghysharpe} for these and other general facts we will require. Unless $\G$ is convex co-compact, the limit set $\circulo$ need not be an intrinsic object associated to the group structure of $\G.$

We will consider the following representations.

\begin{defi}\label{posiDefi}Let $\gripe$ be a proper Gromov-hyperbolic space and $\G$ be a non-elementary discrete isometry group. A representation $\rho:\G\to\sf G$ is \emph{partially positive} if there exists a $\rho$-equivariant continuous map $\xi:\circulo\to\cal F$ such that for every pair $x\neq z$ in $\circulo,$ there exists $y\in \circulo$ such that $\big(\xi(x),\xi(y),\xi(z)\big)$ is a positive triple.\end{defi}

It is implicit in the definition that distinct pairs of $\circulo$ are mapped to transverse flags. 

Recall from Knapp \cite[Chapter B.1]{knapp} that a finite dimensional real Lie algebra $\frak l$ is a semi-direct product $\l_{ss}\oplus_{\pi}\Rad \frak l,$ where $\l_{ss}$ is semi-simple and $\Rad \frak l$ is solvable. The second main result of this paper is the following. 

\begin{thmA}\label{loOtro}Let $\gripe$ be a proper Gromov-hyperbolic space, $\G<\isom X$ a non-elementary discrete subgroup and $\rho:\G\to\sf G$ a partially positive representation. Denote by $\sf L$ the Zariski closure of $\rho(\G).$ Then the semi-simple part $\l_{ss}$ is either $\ge,$ a principal $\sl_2(\R),$ or $\Int\ge$-conjugated to one of the possibilities listed in Table \ref{unesco}.\end{thmA}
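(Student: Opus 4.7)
The plan is to reduce Theorem~\ref{loOtro} to Theorem~\ref{tA}. The two tasks will be to verify the hypotheses of Theorem~\ref{tA} for $\rho(\G)$ in the case when $\sf L$ is reductive, and to reduce the general case to the reductive one via a Levi decomposition of $\sf L$.

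First, since $\G$ is a non-elementary discrete subgroup of the isometry group of a proper Gromov-hyperbolic space, $\G$ acts minimally on $\circulo$ and contains loxodromic elements. For a loxodromic $\g\in\G$ with attracting/repelling fixed points $\g_\pm\in\circulo$, partial positivity supplies $y\in\circulo$ such that $\big(\xi(\g_+),\xi(y),\xi(\g_-)\big)$ is a positive triple; the transversality built into positive triples forces $\xi(\g_+)$ and $\xi(\g_-)$ to be transverse and $y\neq\g_\pm$. Since $\g^n\cdot y\to\g_+$ in $\circulo$, continuity and equivariance of $\xi$ yield $\rho(\g)^n\xi(y)\to\xi(\g_+)$, which together with transversality shows that $\rho(\g)$ is proximal on $\cal F$ with attracting flag $\xi(\g_+)$.

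It then follows that $\Lim{\rho(\G),\simple}\subseteq\xi(\circulo)$, as the attracting flags of such $\rho(\g)$ lie in $\xi(\circulo)$. Conversely, $\xi(\circulo)$ is a closed $\rho(\G)$-invariant subset of $\cal F$, and minimality of $\G$ on $\circulo$ descends through the continuous equivariant map $\xi$ to minimality of $\xi(\circulo)$ as a $\rho(\G)$-set, forcing $\xi(\circulo)=\Lim{\rho(\G),\simple}$. Projecting equivariantly to each $\cal F_{\{\sroot\}}$ transfers minimality to the partial limit sets $\Lim{\rho(\G),\{\sroot\}}$, and the triple $\big(\xi(\g_+),\xi(y),\xi(\g_-)\big)$ supplies the required positive loxodromic triple. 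Hence when $\sf L$ is reductive, Theorem~\ref{tA} directly yields the classification of $\l_{ss}$.

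The main obstacle will be reducing the general case to the reductive one. I will take the Levi decomposition $\sf L=\sf H\ltimes\sf R_u$, with $\sf R_u$ the unipotent radical and $\sf H$ a Levi subgroup whose semi-simple part is $\l_{ss}$, and consider the composition $\rho_0=\pi\circ\rho:\G\to\sf H\subseteq\sf G$ where $\pi:\sf L\to\sf H$ is the Levi projection; the image $\rho_0(\G)$ is then Zariski dense in the reductive group $\sf H$. The hard part is producing a $\rho_0$-equivariant positive boundary map $\xi_0:\circulo\to\cal F$, since $\xi$ is equivariant for $\rho$ but not a priori for $\rho_0$. I expect this to follow from a dynamical limit argument: conjugating $\rho$ by a one-parameter subgroup of $\sf G$ that contracts $\sf R_u$ and extracting a subsequential limit of the conjugated boundary maps by compactness of the space of continuous maps $\circulo\to\cal F$, with positivity preserved by the openness of the positive locus under limits. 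Once partial positivity of $\rho_0$ is established, the first part of the argument applies and Theorem~\ref{tA} gives the classification, since $\h_{ss}=\l_{ss}$.
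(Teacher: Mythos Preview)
Your overall architecture matches the paper's: verify the hypotheses of Theorem~\ref{tA} for~$\rho(\G)$ via the boundary map, and handle a non-reductive Zariski closure by passing to a semi-simplification. But two steps are genuinely incomplete.

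\textbf{Proximality of $\rho(\g)$.} Knowing that $\rho(\g)$ fixes the transverse pair $\xi(\g_\pm)$ and that $\rho(\g)^n\xi(y)\to\xi(\g_+)$ for \emph{one} $y$ does not force proximality on~$\cal F$. In $\SL_3(\R)$ take $g=\mathrm{diag}(2,2,1/4)$, fixed flags $[B],[\wk B]$, and the generic flag $F=(e_1+e_3,\spa(e_1+e_3,e_2))$: one checks $g^nF\to[B]$, yet $g$ is not proximal on~$\P^2$. What is missing is that $\xi(y)\in\cal F_{>0}$ implies, via Lusztig's canonical basis (Proposition~\ref{>0pesos}), that $\mapa_\sroot\xi(y)$ has \emph{every} coefficient nonzero in $\base_{\Fund_\sroot}$; only then does convergence to the top weight line force all other eigenvalues to be strictly smaller in modulus. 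This is exactly the content of Lemma~\ref{lox} in the paper and cannot be replaced by transversality alone.

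\textbf{Minimality of the partial limit sets.} You assert that projecting $\xi(\circulo)$ to $\cal F_{\{\sroot\}}$ yields $\Lim{\rho(\G),\{\sroot\}}$, but the limit set is by definition the closure of attractors of \emph{all} $\{\sroot\}$-proximal elements of $\rho(\G)$, not just those coming from hyperbolic $\g$. A parabolic $\upupsilon\in\G$ could a priori map to an element proximal on some $\cal F_{\{\sroot\}}$ with attractor outside $p_\sroot\big(\xi(\circulo)\big)$, destroying minimality. The paper rules this out with Lemma~\ref{parab}: a positive-basis argument shows that $\rho(\upupsilon)^k$ is unipotent, hence never proximal on any partial flag variety. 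You need an analogue of this step.

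\textbf{The non-reductive reduction.} Your instinct is correct and coincides with the paper's Proposition~\ref{semi}: the semi-simplification $\eta$ is obtained as a limit of $\sf G$-conjugates of~$\rho$ (Lemma~\ref{ZarRed}), and the limiting boundary map is built from the convergence $g_n\xi_\rho(\g_\pm)\to\xi_\eta(\g_\pm)$ for hyperbolic~$\g$. Two points deserve care. First, the paper does \emph{not} prove that $\eta$ is partially positive; it directly checks that $\eta(\G)$ has minimal limit sets and a positive loxodromic triple, which is all Theorem~\ref{tA} needs. Second, the preservation of positivity under the limit is delicate: the limit of a positive triple lies only in $\overline{\posF}$, and one must use transversality of the limiting map together with Proposition~\ref{abierto} (points of $\partial\posF$ fail to be transverse to $[B]$ or $[\wk B]$) to conclude the limit is still in~$\posF$. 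Your ``openness of the positive locus'' remark points in the opposite direction; openness alone does not prevent degeneration to the boundary.
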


The challenge here is to show that $\Lim{\rho(\G),\simple}=\xi(\circulo)$ and that for every $\sroot\in\simple,$ it projects surjectively to every $\Lim{\rho(\G),\{\sroot\}}$ under the natural projection $\cal F\to\cal F_{\{\sroot\}}.$ 

Let us remark that, in contrast with Theorem \ref{tA}, we do not require the Zariski closure of $\rho(\G)$ to be reductive. We emphasize this by stating the following consequence of Theorem \ref{loOtro}, recall that a discrete group acts \emph{strongly irreducibly} on $\R^k$ if it does not preserve a finite collection of non-trivial subspaces.
 
\begin{cor}\label{reductiveClosure} Assume that $\ge=\sl_n(\R),\sp_{2n}(\R),$ $\so(n,n+1)$ or $\ge_2$. Let $\gripe$ be a proper Gromov-hyperbolic space, $\G<\isom X$ a non-elementary discrete subgroup and $\rho:\G\to\sf G$ a partially positive representation, then its corresponding action on $\R^n,\R^{2n},\R^{2n+1}$ or $\R^7$ respectively is (strongly) irreducible.\end{cor}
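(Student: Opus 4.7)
The plan is to apply Theorem \ref{loOtro} and reduce the statement to a short case-by-case inspection of Table \ref{unesco}. Write $W$ for the natural representation of $\sf G$ (so $W = \R^n, \R^{2n}, \R^{2n+1}$ or $\R^7$ in the four listed cases).

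First I would translate strong irreducibility into a Lie-algebraic statement. A finite-index subgroup $\rho(\G_0)$ of $\rho(\G)$ has Zariski closure of finite index in $\sf L$, hence containing the identity component $\sf L^0$; conversely, if $\sf L^0$ preserves a proper subspace $W' \subsetneq W$, its stabilizer in $\sf L$ has finite index and pulls back to a finite-index subgroup of $\rho(\G)$ acting reducibly on $W$. Thus strong irreducibility is equivalent to irreducibility of $\sf L^0$ on $W$, and since any $\l$-invariant subspace is a fortiori $\l_{ss}$-invariant under the Levi decomposition $\l = \l_{ss} \oplus_\pi \Rad \l$ of the Lie algebra of $\sf L^0$, it suffices to show that $\l_{ss}$ acts irreducibly on $W$.

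Next I would invoke Theorem \ref{loOtro}: $\l_{ss}$ is either $\ge$, a principal $\sl_2(\R)$, or one of the entries of Table \ref{unesco}. For the four algebras allowed in the hypothesis, inspecting the table shows that the only additional possibilities are $\sp(2m,\R) \hookrightarrow \sl_{2m}(\R)$ and $\so(m, m+1) \hookrightarrow \sl_{2m+1}(\R)$ via their defining representations, the seven-dimensional $\ge_2 \hookrightarrow \sl_7(\R)$, and, when $\ge = \so(3,4)$, the embedding $\ge_2 \hookrightarrow \so(3,4)$; in the last two cases the inclusion is given by the fundamental representation of $\ge_2$ for the short root. No table entry occurs when $\ge$ is $\sp_{2n}(\R)$ or $\ge_2$.

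Finally I would verify irreducibility case by case. The case $\l_{ss} = \ge$ is tautological; each of the table entries relevant here is, by construction, the defining representation of the classical algebra involved or the unique irreducible seven-dimensional representation of $\ge_2$, all of which are irreducible. For a principal $\sl_2(\R)$, a classical computation shows that the natural representation of $\sl_n(\R), \sp_{2n}(\R), \so(n, n+1)$ or $\ge_2$ restricts to a single irreducible $\sl_2$-module (of dimensions $n, 2n, 2n+1$ and $7$ respectively). I do not foresee any substantive obstacle: the heart of the argument is Theorem \ref{loOtro}, and what remains is a brief compilation of well-known facts about the listed embeddings.
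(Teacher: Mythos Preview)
Your proposal is correct and follows exactly the approach the paper intends: the corollary is stated there as an immediate consequence of Theorem \ref{loOtro} without a separate proof, and your argument spells out precisely the reduction (via the Levi decomposition to $\l_{ss}$) and the case-by-case check against Table \ref{unesco} that makes this work.
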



Theorem \ref{loOtro} together with Theorem \ref{HP}  give a new proof of the following classification result by Guichard (the argument is postponed to \S\ref{gr}). As before, $\ge_2$ is the split real form of the corresponding complex exceptional Lie algebra and $\Ge_2=\Int \ge_2.$ 

\begin{cor}[Guichard \cite{clausura}]\label{gui} Let $\rho:\pi_1S\to\PSL_d(\R)$ be a representation in the Hitchin component. Then $\rho(\pi_1S)$ is contained in the identity component of its Zariski closure, and these are: either $\PSL_d(\R),$ a principal $\PSL_2(\R)$ or conjugated to one of the following:\begin{itemize}\item[-]$\PSp_{2n}(\R)$ if $d=2n$ for all $n\geq1,$\item[-] $\PSO_0(n,n+1)$ if $d=2n+1$ for all $n\geq1,$ \item[-] the fundamental representation for the short root of $\Ge_2$ if $d=7.$\end{itemize}\end{cor}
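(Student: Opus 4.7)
The plan is to deduce Corollary \ref{gui} by specializing Theorem \ref{loOtro} to $\sf G = \PSL_d(\R)$, after first checking that every Hitchin representation is partially positive in the sense of Definition \ref{posiDefi}. To do this I would take the uniformization action of $\G = \pi_1 S$ on the hyperbolic plane $\gripe = \HH^2$, a proper Gromov-hyperbolic space on which $\G$ acts as a non-elementary discrete isometry group with limit set $\circulo = \bord \HH^2 \simeq \bord \pi_1 S$. Theorem \ref{HP} provides a continuous $\rho$-equivariant map $\xi : \circulo \to \cal F$ that sends cyclically ordered triples to positive triples of flags. For any two distinct $x, z \in \circulo$, picking $y$ on either arc of $\circulo \setminus \{x, z\}$ gives a cyclically ordered triple $(x, y, z)$, hence $\bigl(\xi(x), \xi(y), \xi(z)\bigr)$ is positive; partial positivity follows.

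Applying Theorem \ref{loOtro} with $\ge = \sl_d(\R)$ and restricting Table \ref{unesco} to this first column leaves for $\l_{ss}$ only the following possibilities: $\sl_d(\R)$, a principal $\sl_2(\R)$, $\sp(2n, \R)$ in its defining embedding when $d = 2n$, $\so(n, n+1)$ in its defining embedding when $d = 2n+1$, and $\ge_2$ via its fundamental short-root embedding into $\sl_7(\R)$ when $d = 7$. This already matches Guichard's list at the Lie-algebra level.

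To conclude at the group level I would invoke Corollary \ref{reductiveClosure}, which applies since $\ge = \sl_d(\R)$: the action of $\rho(\G)$ on $\R^d$ is strongly irreducible, so $\sf L$ is reductive and the center of $\lie(\sf L)$, acting by scalars, must vanish inside the traceless algebra $\sl_d(\R)$; thus $\l = \l_{ss}$. The identity component $\sf L^0$ is therefore conjugate to one of $\PSL_d(\R)$, a principal $\PSL_2(\R)$, $\PSp_{2n}(\R)$, $\PSO(n, n+1)$, or $\Ge_2 \subset \PSL_7(\R)$. The step I expect to be the main obstacle is this final upgrade from $\sf L^0$ to $\sf L$: it requires either a case-by-case check that the normalizer $N_{\PSL_d(\R)}(\sf L^0)$ coincides with $\sf L^0$ for each candidate on the list, or a connectedness argument exploiting the fact that $\rho$ can be deformed within the Hitchin component to a Fuchsian representation, for which the Zariski closure is known to be the connected principal $\PSL_2(\R)$.
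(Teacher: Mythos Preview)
Your reduction via Theorem~\ref{HP} and Theorem~\ref{loOtro} is correct and matches the paper. The remaining issue is indeed the passage from $\sf L^0$ to $\sf L$, but your first suggestion does not work as stated: for instance the normalizer of $\SO_0(n,n+1)$ inside $\SL_{2n+1}(\R)$ is the full two-component group $\SO(n,n+1)$, so $\sf L^0$ need not coincide with its normalizer.

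The paper implements your second suggestion, but with two concrete devices you omit. First, it lifts the problem from $\PSL_d(\R)$ to $\SL_d(\R)$; this is not automatic and is handled via the commutator-product obstruction for the standard surface-group presentation (the Euler-class argument of Goldman). The lift is also what makes Corollary~\ref{reductiveClosure} literally applicable, since that corollary refers to an action on $\R^d$. Second, working now in $\frak H(\pi_1 S,\SL_d(\R))$, the paper observes that for each non-torsion $\g$ the elliptic component $m_{\rho(\g)}\in\sf M$ from the decomposition~\eqref{diagonalGamma} is a \emph{locally constant} function of $\rho$. At the principal basepoint this component lies in $\tau(\{\pm\id\})$, a subgroup that happens to sit inside every candidate $\sf H_0$ of table~\ref{lista}; local constancy then forces $\rho(\g)\in\sf H_0$ throughout the component, whence $\sf H=\sf H_0$.
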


Corollary \ref{gui} plays a central role in Corollary 11.8 of Bridgeman-Canary-Labourie-S. \cite{pressure} and in the recent work by Danciger-Zhang \cite{Danciger-Zhang}, allowing the authors to reduce the general problem to the group $\PSO(n,n+1).$

\subsection{Final remarks}

It is unclear whether all possibilities stated in Theorem \ref{loOtro} might actually occur. When $\G=\pi_1S$ ($S$ as above) then Hitchin's Theorem \cite{hitchin} implies this is actually the case. However, a recent result by Alessandrini-Lee-Schaffhauser \cite{HitchinOrbifold} provides many examples of locally rigid positive representations of groups with torsion.

\subsection{Organization of the paper} In \S\ref{remainders} we recall some facts on representation theory of real reductive Lie algebras of non-compact type. In \S\ref{HasseDiagrams} we introduce the Hasse diagram of a representation of such a Lie algebra, this is nothing but the usual Hasse diagram of a partially order set (here to be the restricted weights of the representation with their natural partial order). We introduce maps between diagrams and notably study the existence of a surjective map between two Hasse diagrams. There is a case by case proof that is postponed to appendix \S\ref{figurasdiagramas}. 

In \S\ref{discretegroups} we study Zariski closures of discrete groups verifying a coherence condition with respect to the position of their eigenspaces, and relate these to maps between Hasse diagrams of the Zariski closure and the ambient group. The key point is Proposition \ref{existenMapas} that, in light of the previous section, classifies Zariski closures of these groups, provided it is reductive.

Section \ref{total>0} begins by recalling total positivity introduced by Lusztig \cite{Lusztig-TP}, we prove then that groups whose limit sets contains a positive loxodromic triple verify the coherence condition studied in \S\ref{discretegroups}. This proves Theorem \ref{tA}. Theorem \ref{loOtro} is also proved in this section. 
In \S\ref{gr} we focus on the $\SL_d(\R)$ situation and prove Guichard's classification (Corollary \ref{gui}). 

The paper is written rather linearly so one has the following diagram representing dependence between sections:
\begin{center}
\begin{tikzcd}[column sep=small,]
&&   \S\ref{figurasdiagramas} \arrow[d]& & &&\\ 
\S\ref{intro}\arrow[r]&\S\ref{remainders}\arrow[r]&\S\ref{HasseDiagrams}\arrow[r]&\S\ref{discretegroups}\arrow[r]&\S\ref{total>0}\arrow[r]&\S\ref{gr}
\end{tikzcd}
\end{center}

\begin{ack}The author would like to thank Olivier Gui\-chard and Maria Beatrice Pozzetti for enlightening discussions and careful reading of this article. He would also like to thank the referees for careful reading and improving the exposition of the paper.
\end{ack}

\section{Review on Lie Theory}\label{remainders}

\subsection{Semi-simple Lie algebras} Let $\ge$ be a semi-simple real Lie algebra of the non-compact type and fix a Cartan involution $\cartan:\ge\to\ge$ with associated Cartan decomposition $\ge=\k\oplus\p.$ Let $\a\subset\p$ be a maximal abelian subspace and let $\root\subset\a^*$ be the set of restricted roots of $\a$ in $\ge.$ For $\aa\in\root$ let us denote by $$\ge_\aa=\{u\in\ge:[a,u]=\aa(a)u\ \forall a\in\a\}$$ its associated root space. One has the (restricted) root space decomposition $\ge=\ge_0\oplus\bigoplus_{\aa\in\root}\ge_\aa,$ where $\ge_0$ is the centralizer of $\a.$

Fix a Weyl chamber $\a^+$ of $\a$ and let $\root^+$ and $\simple$ be, respectively, the associated sets of positive roots and of simple roots. One has that $\root=\root^+\cup-\root^+$ and that if $\aa\in\root^+$ then, upon writing $$\aa=\sum_{\sroot\in\simple}k_\sroot\sroot,$$ every coefficient $k_\sroot$ is a non-negative integer. The \emph{height} of $\aa$ is $\het(\aa)=\sum_\sroot k_\sroot.$

Let us denote by $(\cdot,\cdot)$ the Killing form of $\ge,$ its restriction to $\a,$ and its associated dual form in the dual $\a^*$ of $\a.$ For $\chi,\psi\in\a^*$ define \begin{equation}\label{prodsimply}\<\chi,\psi\>=2\frac{(\chi,\psi)}{(\psi,\psi)}.\end{equation}

The \emph{Weyl group} of $\root,$ denoted by $\Weyl,$ is the group generated by, for each $\aa\in\root,$ the reflection $r_\aa:\a^*\to\a^*$ on the hyperplane $\aa^\perp,$ $$r_\aa(\chi)=\chi-\<\chi,\aa\>\aa.$$ It is a finite group with a unique \emph{longest} element $w_0$ (w.r.t. the word metric on the generating set $\{r_\aa:\aa\in\simple\}$). This longest element sends $\a^+$ to $-\a^+.$

Recall that the \emph{Dynkin diagram} of the root system $\root$ consists on a graph whose vertices are the elements of $\simple$ and such that $\aa,\bb\in\simple$ are joined by $\<\aa,\bb\>\<\bb,\aa\>$ edges. If two simple roots are joined by more than one edge then an arrow is added pointing to the shortest (in norm $(\cdot,\cdot)$) root. One speaks indistinctively of the Dynkin diagram of $\ge,$ $\root$ or of $\simple.$

We will require the following notion:

\begin{defi}\label{extremal}An element of $\simple$ is \emph{extremal} if it is connected to exactly one root in the Dynkin diagram of $\root.$\end{defi}

The root systems of type $\D$ and $\EE$ have 3 extremal roots, while the others only have two.

\subsubsection{Some $\sl_2$'s of $\ge$}\label{xalpha}

For $\aa\in\root$ let $t_\aa,h_\aa\in\a$ be defined such that, for all $v\in\a$  and all $\varphi\in\a^*,$ one has $$\aa(v)=(v,t_\aa)\textrm{ and }\varphi(h_\aa)=\<\varphi,\aa\>.$$ These two vectors are related by the simple formula $h_\aa=2t_\aa/(t_\aa,t_\aa).$ Recall that for $x\in\ge_\aa$ one has $[x,o(x)]=(x,o(x))t_\aa$. Thus, for each $\aa\in\root^+$ and $\xx_\aa\in\ge_\aa$ there exists $\yy_\aa\in\ge_{-\aa}$ such that \begin{alignat*}{3} e=& (\begin{smallmatrix}0 & 1 \\ 0 & 0\end{smallmatrix})& \mapsto \xx_\alpha \\ f= & (\begin{smallmatrix}0 &  0\\ 1 & 0\end{smallmatrix}) & \mapsto \yy_\alpha\\  h= & (\begin{smallmatrix}1 & 0 \\ 0 & -1\end{smallmatrix})& \mapsto h_\alpha \end{alignat*} is a Lie algebra isomorphism between $\sl_2(\R)$ and the span of $\{\xx_\aa,\yy_\aa,h_\aa\}.$ Let us fix such a choice of $\xx_\aa$ and $\yy_\aa$ from now on.

One says that $\ge$ is \emph{split} if the complexification $\a\otimes\C$ is a Cartan subalgebra of $\ge\otimes\C.$ Equivalently, $\ge$ is split if the centralizer $\centro_\k(\a)$ of $\a$ in $\k$ is trivial.

Assume that $\ge$ is split. Following Kostant \cite[\S 5]{kostant}, consider the dual basis of $\{t_\sroot:\sroot\in\simple\}$ relative to $(\cdot,\cdot)$: $(\om_\aa,t_\bb)=\delta_{\aa\bb},$ and let $\om_0=\sum_{\sroot\in\simple}\om_\sroot\in\a.$ The element $\om_0$ is the semi-simple element of a $3$-dimensional simple subalgebra of $\ge.$ Such a subalgebra, or any of its $\Int \ge$-conjugates, will be called \emph{a principal $\sl_2(\R)$} of $\ge.$ 

Let us denote by $\frak n=\bigoplus_{\aa\in\root^+}\ge_\aa.$

\begin{thm}[{Kostant \cite[Thm 5.3]{kostant}}]\label{nilpoPrinc}Let $\ge$ be a split Lie algebra and consider an element $$\ee=\sum_{\aa\in\root^+}a_\aa\xx_\aa\in\frak n.$$ Then $\ee$ lies in a principal $\sl_2(\R)$ if and only if $a_\sroot\neq0$ for all $\sroot\in\simple.$
\end{thm}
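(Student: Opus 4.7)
The plan is to prove the two directions separately. For the sufficient direction ($\Leftarrow$), my strategy is a two-step construction: first conjugate $\ee$ by an element of $\exp(\n)$ to remove higher-height components and reduce to $\ee' := \sum_{\sroot \in \simple} a_\sroot \xx_\sroot$, and second produce an explicit $\sl_2(\R)$-triple containing $\ee'$ with semisimple element $2\om_0$.

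The reduction proceeds by induction on the height grading $\ge = \bigoplus_k \ge^{(k)}$ (equivalently, by $\ad\om_0$-eigenvalues, since $\aa(\om_0)=\het(\aa)$). Assuming the components of $\ee$ in heights $2,\dots,k-1$ have already been killed, I pick $\aa \in \root^+$ of height $k\geq 2$, write $\aa = \sroot + \beta$ with $\sroot \in \simple$ and $\beta \in \root^+$ of height $k-1$, and conjugate by $\exp(y)$ for suitable $y \in \ge_\beta$. The $\ge_\aa$-component of $\Ad(\exp(y))\ee$ is modified at leading order by $a_\sroot [y, \xx_\sroot]$; the one-dimensionality of $\ge_\aa$ (by splitness), the non-vanishing of $a_\sroot$, and the classical fact that $\n$ is generated as a Lie algebra by $\{\xx_\sroot\}_{\sroot \in \simple}$ together ensure the $\ge_\aa$-component can be eliminated while all strictly lower heights stay untouched (since $y$ itself has positive height). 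For the reduced element $\ee'$, setting $h := 2\om_0$ makes $[h,\ee']=2\ee'$ automatic, as $\sroot(2\om_0)=2$ for every simple $\sroot$; searching for $\f' = \sum_{\sroot\in\simple} c_\sroot \yy_\sroot$, one computes $[\ee',\f'] = \sum_\sroot a_\sroot c_\sroot h_\sroot$, using that $[\xx_\sroot,\yy_\tau]=0$ for distinct simple $\sroot,\tau$ (since $\sroot-\tau$ is not a root). As $\{h_\sroot : \sroot\in\simple\}$ is a basis of $\a$ (splitness) and every $a_\sroot \neq 0$, this linear system has a unique solution forcing $[\ee',\f']=h$, and $[h,\f']=-2\f'$ holds automatically. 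The triple $\{\ee',h,\f'\}$ is principal by construction, and conjugating back realizes $\ee$ inside a principal $\sl_2(\R)$.

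For the necessary direction ($\Rightarrow$), if $\ee$ belongs to a principal $\sl_2(\R)$ it is a regular nilpotent element, so $\dim \centro_\ge(\ee) = \rk \ge$. I would establish the contrapositive: if $a_\tau = 0$ for some $\tau \in \simple$, then $\ee$ fails to be regular. The cleanest route is a $B$-orbit argument — the construction above, combined with the $T$-action rescaling simple-root coefficients, identifies $\{\sum a_\aa \xx_\aa \in \n : a_\sroot \neq 0 \,\forall \sroot \in \simple\}$ with the $B$-orbit of $\ee_0 := \sum_\sroot \xx_\sroot$; this orbit is open of full dimension $|\root^+|$ in $\n$ and, by Kostant's structural results on the principal nilpotent orbit, coincides with the locus of regular nilpotents inside $\n$. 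Hence $a_\tau = 0$ forces $\ee$ outside this locus, contradicting regularity. The main obstacle I expect is the surjectivity step in the height induction — verifying that $[\xx_\sroot, \ge_\beta]$ spans $\ge_\aa$ for the chosen decompositions — which ultimately reduces to the classical fact that $\n$ is generated by its simple-root vectors; the other subtle point is the $B$-orbit identification with the regular nilpotent locus in $\n$, which again follows from Kostant's analysis.
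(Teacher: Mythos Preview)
The paper does not prove this theorem: it is stated and attributed to Kostant \cite[Thm 5.3]{kostant} with no proof given, and is only \emph{applied} later (in Corollary~\ref{rg=1}). So there is no ``paper's own proof'' to compare against.

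Your outline is essentially the standard argument and is correct in spirit, but two points deserve tightening.

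\textbf{The surjectivity in Step~1.} Your justification ``$\n$ is generated by simple root vectors'' gives $[\ge^{(1)},\ge^{(k-1)}]=\ge^{(k)}$, but what you need is the stronger statement $[\ee_1,\ge^{(k-1)}]=\ge^{(k)}$ for the \emph{single} element $\ee_1=\sum_\sroot a_\sroot\xx_\sroot$. Doing it root-by-root with $y\in\ge_\beta$ can disturb other height-$k$ components. The clean fix is to reverse your two steps: first build the triple $\{\ee_1,2\om_0,\f'\}$ as in your Step~2, and then invoke $\sl_2(\R)$-representation theory on the adjoint action --- since $\ge^{(k)}$ is exactly the $2k$-weight space of $\ad(2\om_0)$, the raising operator $\ad\ee_1$ is surjective from weight $2(k-1)$ to weight $2k$ for every $k\geq1$. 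This makes the height induction go through level-by-level rather than root-by-root.

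\textbf{The necessary direction.} Citing ``Kostant's structural results on the principal nilpotent orbit'' is circular here, since that is precisely the content of the theorem. A self-contained argument: if $a_\tau=0$ then $\ad\ee$ maps $\b$ into $\frak m_\tau:=\bigoplus_{\aa\in\root^+\setminus\{\tau\}}\ge_\aa$ (indeed $\ad\ee(\a)\subset\frak m_\tau$ since $a_\tau=0$, and $\ad\ee(\n)\subset[\n,\n]$ has height $\geq2$). Since $\dim\b=\rk\ge+|\root^+|$ and $\dim\frak m_\tau=|\root^+|-1$, one gets $\dim\centro_\b(\ee)\geq\rk\ge+1$, so $\ee$ is not regular and hence cannot lie in a principal $\sl_2(\R)$.
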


\subsection{Reductive groups}

A Lie algebra $\ge$ is \emph{reductive} if every $\ad\ge$-invariant subspace of $\ge$ has an $\ad\ge$-invariant complement. It is a standard fact (see Knapp \cite[Chapter I. \S 7]{knapp}) that such an algebra splits as $$\ge=\centro(\ge)\oplus \ge_{ss},$$ where $\ge_{ss}=[\ge,\ge]$ is semi-simple and $\centro(\ge)$ is the center of $\ge.$

A \emph{reductive Lie group} (see for example Knapp \cite[Chapter VII. \S 2.]{knapp}) $\sf G$ is a 4-tuple $\big(\sf G,\sf K,\sigma,(\cdot,\cdot)\big),$ where $\sf K$ is a compact subgroup of $\sf G,$ $\sigma$ is a Lie algebra involution of $\ge$ and $(\cdot,\cdot)$ is a $\sigma$-invariant, $\Ad \sf G$-invariant non-degenerate bilinear form on $\ge$ such that:

\begin{itemize}
\item[-]$\ge$ is a reductive Lie algebra,
\item[-]the Lie algebra $\k$ of $\sf K$ is the set of fixed points of $\sigma,$
\item[-] if $\p=\{x\in\ge:\sigma(x)=-x\}$ then $\k$ and $\p$ are $(\cdot,\cdot)$-orthogonal and $(\cdot,\cdot)$ is positive definite on $\p,$
\item[-] the map $\sf K\times\p\to\sf G,$ $(k,x)\mapsto k\exp x,$ is a surjective diffeomorphism.
\item[-] every automorphism of the form $\Ad(h),$ for $h\in\sf G,$ of the complexification $\ge\otimes\C$ is of the form $\Ad(x)$ for some $x\in\Int(\ge\otimes\C).$
\end{itemize}

Given a reductive group $\sf G$ and a maximal abelian subspace $\a\subset\p,$ one can form, as in the semi-simple case, a restricted root space decomposition $$\ge=\ge_0\oplus\bigoplus_{\aa\in\root_\ge}\ge_\aa$$ where $\ge_\aa=\{x\in\ge:[a,x]=\aa(a)x\,\forall a\in\a\}.$

The relation between the restricted roots $\root_\ge$ and the restricted roots of $\ge_{ss}$ is as follows: the elements of $\root_\ge$ can be obtained by considering the restricted root space decomposition of $\ge_{ss}$ relative to $\a_{ss}=\a\cap\ge_{ss}$ and extending these roots to $\a$ as being zero on $\a\cap\centro(\ge).$



\subsection{Basic facts on representation theory of semi-simple Lie algebras}\label{reps}

Let $\ge$ be a semi-simple Lie algebra over $\R$ without compact factors. We record here some standard facts about irreducible real representations of $\ge,$ see for example Humphreys \cite{james}.

The \emph{restricted weight lattice} is defined by $$\poids=\{\varphi\in\a^*:\<\varphi,\aa\>\in\Z\ \forall\aa\in\root\},$$ it is spanned by the \emph{fundamental weights}: $\{\peso_\sroot:\sroot\in\simple\}$ where $\peso_\sroot$ is defined by $$\<\peso_\sroot,\bb\>=d_\sroot\delta_{\sroot\bb}$$ for every $\sroot,\bb\in\simple,$ where $d_\sroot=1$ if $2\sroot\notin\root^+$ and $d_\sroot=2$ otherwise. 
The set $\poids_+$ of \emph{dominant restricted weights} is defined by $\poids_+=\poids\cap (\a^+)^*.$

Given $\chi,\psi\in\poids$ one says that $\chi\succ\psi$ if $\chi-\psi$ has non-negative integer coefficients in $\simple.$ A subset $\pi\subset\poids$ is \emph{saturated} if for every $\chi\in\pi$ and $\aa\in\root$ the \emph{string} $$\chi-i\aa\qquad i\textrm{ between $0$ and }\<\chi,\aa\>$$ is entirely contained in $\pi.$ Such a set is necessarily $\Weyl$-invariant. We say that $\pi$ has \emph{highest weight} $\mu\in\pi$ if for every $\chi\in\pi$ one has $\mu\succ\chi.$ One has the following lemma, see Humphreys \cite[\S 13.4 Lemma B]{james}.

\begin{lemma}\label{satu} Let $\pi$ be a saturated set of weights with highest weight $\mu,$ then every $\chi\in\poids_+$ with $\mu\succ\chi$ belongs to $\pi.$
\end{lemma}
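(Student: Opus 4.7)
The plan is to proceed by induction on $\het(\mu-\chi) = \sum_{\sroot\in\simple} k_\sroot$, where $\mu-\chi = \sum_\sroot k_\sroot \sroot$ with non-negative integer coefficients $k_\sroot$. The base case $\het(\mu-\chi)=0$ gives $\chi=\mu\in\pi$ directly from the definition of highest weight.

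For the inductive step, the key algebraic input is positive-definiteness of the Killing form on the real span of roots: since $\mu-\chi\neq 0$, one has $0<(\mu-\chi,\mu-\chi)=\sum_\sroot k_\sroot(\sroot,\mu-\chi)$, so some $\sroot_0\in\simple$ satisfies $k_{\sroot_0}>0$ and $(\sroot_0,\mu-\chi)>0$. Combined with dominance of $\chi$, this rewrites as $\langle\mu,\sroot_0\rangle>\langle\chi,\sroot_0\rangle\geq 0$. The strategy is then to show that $\chi+\sroot_0\in\pi$. Granting this, saturation applied to the $\sroot_0$-string through $\chi+\sroot_0$ forces $\chi\in\pi$: indeed $\langle\chi+\sroot_0,\sroot_0\rangle=\langle\chi,\sroot_0\rangle+2\geq 2$, so the string descends at least one step past $\chi+\sroot_0$, covering $\chi=(\chi+\sroot_0)-\sroot_0$. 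Moreover $\chi+\sroot_0\prec\mu$ has strictly smaller height gap to $\mu$, so the inductive hypothesis applies directly whenever $\chi+\sroot_0$ happens to be dominant.

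The main obstacle is the case where $\chi+\sroot_0$ is not dominant. Here I would invoke the automatic $\Weyl$-invariance of any saturated set (noted in the excerpt, and coming from applying the saturation axiom to strings of the form $\chi-i\aa$ for $\aa$ simple): replace $\chi+\sroot_0$ by its unique dominant $\Weyl$-conjugate, verify that this conjugate still lies $\prec\mu$ with strictly smaller height gap to $\mu$, apply the inductive hypothesis there, and transport membership back to $\chi+\sroot_0$ by $\Weyl$-invariance. The delicate technical point is checking that the dominant conjugate remains below $\mu$ in the integer partial order; one argues that any $\tau\in\simple\setminus\{\sroot_0\}$ with $\langle\chi+\sroot_0,\tau\rangle<0$ forces $\langle\sroot_0,\tau\rangle<0$ and that the sign/size constraints on pairings of simple roots, together with the fact that $\mu$ itself is dominant, guarantee that the coefficient $k_\tau$ in $\mu-\chi$ is large enough to absorb the correction from the reflection $r_\tau$. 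This mimics the classical argument of Humphreys \cite[\S 13.4, Lemma B]{james}.
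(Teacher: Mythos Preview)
The paper does not supply its own proof here; it simply cites Humphreys \cite[\S 13.4, Lemma B]{james}, so in that sense your proposal (which also defers to Humphreys) is aligned with the paper. Your inductive set-up and the key inequality $(\sroot_0,\mu-\chi)>0$ are exactly the right ingredients.

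That said, the route you take from there has a genuine gap at precisely the point you flag as ``delicate.'' You need the dominant $\Weyl$-conjugate $(\chi+\sroot_0)^+$ to remain $\prec\mu$, but for an arbitrary weight $\nu\prec\mu$ it is \emph{not} true that $\nu^+\prec\mu$; the extra constraints in your situation (that $\chi$ is dominant, that $\mu$ is dominant, and that $\sroot_0$ was chosen via the positivity argument) do conspire in examples, but your sketch of why (``$k_\tau$ is large enough to absorb the correction from $r_\tau$'') does not go through as stated --- a direct computation using only $\<\mu,\tau\>\ge0$ and $k_{\sroot_0}\ge1$ yields $2k_\tau\ge |\<\sroot_0,\tau\>|-\<\chi,\tau\>$, which is a factor of two short of what a single reflection $r_\tau$ requires, and iterating reflections toward the dominant chamber makes bookkeeping worse.

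The clean way around this (and the one Humphreys actually uses) is to reverse the direction: rather than climbing from $\chi$ toward $\mu$ and worrying about staying dominant, descend inside $\pi$ toward $\chi$. Concretely, take $\nu\in\pi$ minimal (for $\succ$) among elements of $\pi$ that are $\succeq\chi$; this set is non-empty since $\mu\in\pi$. If $\nu\neq\chi$, write $\nu-\chi=\sum_\sroot c_\sroot\sroot$ and compute
\[
\sum_\sroot c_\sroot(\sroot,\nu)=(\nu-\chi,\nu)=(\nu-\chi,\nu-\chi)+(\nu-\chi,\chi)>0,
\]
the last term being $\ge0$ because $\chi$ is dominant. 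Hence some $\sroot_0$ has $c_{\sroot_0}>0$ and $\<\nu,\sroot_0\>>0$, so saturation gives $\nu-\sroot_0\in\pi$ with $\nu-\sroot_0\succeq\chi$, contradicting minimality. Thus $\nu=\chi\in\pi$. This avoids any dominance question for intermediate weights, since $\nu$ is never required to be dominant --- only to lie in $\pi$.
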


Let $\phi:\ge\to\sl(V)$ be an irreducible representation. The sub-algebra $\phi(\a)$ is self-adjoint for an inner product of $V$ and thus the space $V$ decomposes as a sum $V=\bigoplus_{\chi\in\poids(\phi)} V^\chi,$ where $$V^\chi=\{v\in V:\phi(a)v=\chi(a)v\,\forall a\in\a\}$$ are the common eigen-spaces, called \emph{restricted weight spaces}, and $$\poids(\phi)=\big\{\chi\in\a^*:V^\chi\neq\{0\}\big\}$$ is called the \emph{set of restricted weights} of $\phi.$ It is a $\Weyl$-invariant set. The \emph{multiplicity} of $\chi\in\poids(\phi)$ is denoted by $\mult_\phi(\chi)$ and defined as the dimension of its restricted weight space, $\mult_\phi(\chi)=\dim V^\chi.$ We will often omit the subscript and write $\mult(\chi)$ if there no ambiguity in $\phi.$

\begin{prop}[{See Humphreys \cite[Proposition 21.3]{james}}]\label{cuerda}Let $(V,\phi)$ be an irreducible representation of $\ge.$ Then the set $\poids(\phi)$ is saturated with highest weight $\chi_\phi$. In particular, for $\chi\in\poids(\phi)$ and $\aa\in\root,$ the elements of $\poids(\phi)$ of the form $\chi+i\aa,\, i\in\Z$ form an unbroken string $$\chi+i\aa,\, i\in\lb-r,q\rb$$ and $r-q=\<\chi,\aa\>.$\end{prop}

The  unique maximal element $\chi_\phi$ of $\poids(\phi)$  from  Proposition \ref{cuerda} is called the \emph{the highest restricted weight} of $\phi$. By definition, for every $a\in\a^+$ one has $\chi_\phi(a)=\jordan_1\big(\phi(a)\big)$, the spectral radius of $\phi(a)$. The restricted weight space associated to $\chi_\phi$ is \begin{equation}\label{V+}V^+=V^{\chi_\phi}=\big\{v\in V:\phi(\n)v=\{0\}\big\}.\end{equation} 

To simplify notation, for $\aa\in\root^+$, we let $\wk{\ge}_\aa=\ge_{-\aa}$, then one has the following.

\begin{obs}\label{generador}The subspaces of the form $\phi(\wk\ge_{\bb_\ell})\cdots\phi(\wk\ge_{\bb_0})V^+$ with $\bb_i\in\simple$ (repetitions allowed) that do not identically vanish are in direct sum. Indeed, such a space is contained the restricted weight space associated to $$\chi_\phi-\sum_{i=0}^\ell\bb_i.$$ Every weight of $\phi$ is obtained in this fashion, moreover, by construction every weight $\chi\in\poids(\phi)$ can be written as $\chi=\chi_\phi-\bb_0-\cdots-\bb_\ell,$ with $\bb_j\in\simple,$ in such a way that all the partial sums $$\chi=\chi_\phi-\bb_0-\cdots-\bb_j\ \  j\in\lb1,\ell\rb$$ are weights of $\phi.$\end{obs}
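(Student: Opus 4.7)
The plan is to split the remark into three assertions and handle them in order.

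\textbf{First}, for any $\bb \in \root$, $x \in \ge_{-\bb}$ and $v \in V^\chi$, the computation $\phi(a)\phi(x)v = \phi([a,x])v + \phi(x)\phi(a)v = (\chi-\bb)(a)\phi(x)v$ for $a \in \a$ shows $\phi(x)v \in V^{\chi-\bb}$. Iterating yields the inclusion $\phi(\wk\ge_{\bb_\ell})\cdots\phi(\wk\ge_{\bb_0}) V^+ \subset V^{\chi_\phi - \sum \bb_i}$. Since $V = \bigoplus_{\chi} V^\chi$, the non-vanishing subspaces of this form whose root sequences give distinct sums lie in distinct weight spaces and are therefore automatically in direct sum, settling the first two sentences of the remark.

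\textbf{Second}, to see that every weight arises, I would first prove that $V$ is generated as a $\phi(\wk\n)$-module by $V^+$, where $\wk\n = \bigoplus_{\aa\in\root^+}\ge_{-\aa}$. The subspace $W := U(\phi(\wk\n))\cdot V^+$ is manifestly $\phi(\wk\n)$- and $\phi(\a)$-stable. One checks it is $\phi(\n)$-stable by induction on word length, using $\phi(\n)V^+=0$ together with the bracket relation $[\ge_\aa,\ge_\bb]\subset\ge_{\aa+\bb}$ to commute an element of $\phi(\n)$ through a word in $\phi(\wk\n)$, trading the leading factor for terms of strictly shorter length plus a tail that eventually reaches $V^+$. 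Thus $W$ is $\phi(\ge)$-stable and equals $V$ by irreducibility. Now $\wk\n$ is generated as a Lie algebra by the simple lowering spaces $\{\ge_{-\sroot}\}_{\sroot\in\simple}$, so $U(\wk\n)$ is spanned by iterated products of elements in these spaces. Consequently $V$ is the sum of the subspaces $\phi(\wk\ge_{\bb_\ell})\cdots\phi(\wk\ge_{\bb_0}) V^+$ with $\bb_i \in \simple$, and each weight of $\phi$ appears as $\chi_\phi - \sum \bb_i$ for some such sequence.

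\textbf{Third}, for the refined partial-sums claim, pick $0 \neq v \in V^\chi$ and write it as a linear combination of products $\phi(y_\ell)\cdots \phi(y_0)v^+$ with $y_i \in \ge_{-\bb_i}$, $\bb_i \in \simple$, and $v^+ \in V^+$. At least one summand is itself nonzero in $V^\chi$; fix such a summand. Each intermediate vector $v_j := \phi(y_j)\cdots\phi(y_0)v^+$ must then be nonzero, since otherwise the entire product would vanish, and $v_j \in V^{\chi_\phi - \bb_0 - \cdots - \bb_j}$ by the first step. Hence every partial sum $\chi_\phi - \bb_0 - \cdots - \bb_j$ is a weight of $\phi$, as required. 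The only delicate point in the argument is the $\phi(\n)$-stability of $U(\phi(\wk\n))\cdot V^+$ in step two; once that standard fact is in hand, the remainder is straightforward bookkeeping with root strings and weight spaces.
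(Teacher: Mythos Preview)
Your argument is correct and supplies the details the paper omits: in the text, Remark \ref{generador} is stated as a standard fact with only the one-line ``Indeed'' justifying the direct sum claim, and the phrase ``by construction'' for the partial-sums assertion. Your module-theoretic approach via $U(\wk\n)\cdot V^+$ is the expected way to unpack this.

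One small point to tighten in step two: you verify that $W=U(\phi(\wk\n))\cdot V^+$ is stable under $\phi(\a)$, $\phi(\wk\n)$, and $\phi(\n)$, and then invoke irreducibility of $V$. But the decomposition is $\ge=\ge_0\oplus\n\oplus\wk\n$, and in the non-split setting $\ge_0$ strictly contains $\a$ (it equals $\a\oplus\centro_\k(\a)$). So you need $\phi(\ge_0)$-stability as well. This is immediate: $\ge_0$ commutes with $\a$, hence $\phi(\ge_0)$ preserves each weight space and in particular $V^+$; and since $[\ge_0,\ge_{-\bb}]\subset\ge_{-\bb}$, the same commutation trick you use for $\phi(\n)$ lets you push an element of $\phi(\ge_0)$ through a word in the $\phi(\wk\ge_{\bb_i})$. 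With that addition the argument is complete.
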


\begin{ex} Let us treat the example of the defining representation $\phi$ of $\sl_d(\R)$, i.e. the identity representation $\phi:\sl_d(\R)\to\sl_d(\R)$. A Cartan subspace is  $$\a=\{\diag(a_1,\cdots,a_d): a_i\in\R\textrm{ and }\sum a_i=0\}.$$ A set of simple roots is $\simple=\{\bb_i\}_1^{d-1}$, where for each $i\in\lb1,d-1\rb$ one lets $\bb_i(a)=a_i-a_{i+1}$, and the associated Weyl chamber is $\a^+=\{a\in\a:a_i\geq a_{i+1},\ i\in\lb1,d-1\rb\}$.

The highest weight of the representation $\phi$ is $\chi_\phi\in\a^*$ such that for all $a\in\a^+$ one has $\chi_\phi(a)$ is the spectral radius of $\phi(a)=a$. By the choice of $\a^+$ one has $\chi_\phi=\peso_{\bb_1}:a\mapsto a_1.$ The remaining weights of $\phi$, i.e. the elements of $\a^*$ describing the eigenvalues of $a\in\a$, are $\poids(\phi)=\{\eps_i(a)=a_i\}_1^{d-1}$. We find them algorithmically from $\peso_{\bb_1}$ and $\simple$ by means of Proposition \ref{cuerda} and Remark \ref{generador} as follows:

\begin{itemize}\item[1)] Consider the simple roots $\sroot$ that are not orthogonal to $\chi_\phi$ (equivalently such that $\<\chi_\phi,\sroot\>\neq0$). In this case only $\sroot=\bb_1$ works, giving $\<\chi_\phi,\bb_1\>=1$ by the very definition of $\chi_\phi=\peso_{\bb_1}$, so the $\bb_1$-string through $\chi_\phi$ has length $1$ which yields that $\chi_2=\peso_\chi-\bb_1:a\mapsto a_1-(a_1-a_2)=a_2$ is a weight of $\phi$.

\item[2)] We now consider the roots $\sroot$ with $\<\chi_2,\sroot\>\neq0$. By linearity of $\<\,,\>$ on the first coordinate one sees that only $\bb_1$ and $\bb_2$ work in this case, with values $-1$ and $1$ respectively. The first one gives that $\chi_2+\bb_1$ is a weight (which we already new), and the second one gives $\chi_3=(\peso_{\bb_1}-\bb_1)-\bb_2:a\mapsto a_3$ is a weight of $\phi$.

\item[3)] one repeats the procedure to obtain the other weights.
\end{itemize}
\end{ex}

\section{Hasse diagrams for representations}\label{HasseDiagrams} If $\phi:\ge\to\frak{gl}(V)$ is an irreducible representation of a real semi-simple Lie algebra $\ge$ without compact factors, then its set of weights carries the partial order $\succ$ previously defined: $\chi\succ\psi$ if the coefficients of $\chi-\psi$ in $\simple$ are non-negative integers.

One defines then the \emph{Hasse diagram} of the representation $\phi$ as a graph whose vertices are the elements of $\poids(\phi),$ and one draws an edge between $\chi$ and $\psi$ if and only if $\chi-\psi\in\simple.$ Because of the non-symmetry of $\succ,$ the edge should be a directed arrow, however we prefer to forget the arrow and draw $\psi$ \emph{below} $\chi.$ It is also convenient to label the edge with the simple root $\chi-\psi.$

These Hasse diagrams carry a natural grading or \emph{levels} defined by the function $$\level\Big(\chi_\phi-\sum_{\sroot\in\simple}k_\sroot\sroot\Big)=1+\sum k_\sroot.$$ By means of Remark \ref{generador} one can draw the Hasse diagram of a given representation \emph{level by level}, starting from it's highest weight and inductively checking, for a given weight $\chi\in\poids(\phi)$ the set of simple roots $\sroot\in\simple$ such that $\phi(\wk{\ge}_\sroot)V^\chi=\{0\}.$ This in turn can be directly computed from the root system $\root$ using Proposition \ref{cuerda}: one computes $\<\chi,\sroot\>$ and, since all lower levels of the diagram are assumed to be known, one knows whether $\chi+\sroot$ (down one level) belongs to $\poids(\phi)$ or not.

It is more convenient then to define the Hasse diagram as depending only on the type of the root system $\root,$ and of a given dominant weight $\chi\in\poids_+$ that will play the role of the highest weight of an irreducible representation. 

\begin{defi}The Hasse diagram of a root system of type $\sf L$ and a given dominant weight $\chi\in\poids_+$ will be denoted by $\hasse{\sf L}{\chi}.$
\end{defi}

\begin{ex}\label{48}For example, the Hasse diagram of a fundamental weight $\peso_\sroot$, where $\sroot$ is such that $2\sroot\notin\simple$, has \begin{itemize}\item[-]solely $\peso_\sroot$ at the first level, \item[-] only $\peso_\sroot-\sroot$ at the second level, \item[-] the forms $\peso_\sroot-\sroot-\bb$, for every $\bb\in\simple$ neighboring $\sroot$ in the Dynkin diagram of the given root system, at the third level.
\end{itemize}
The remaining levels can become quickly very complicated. 
\end{ex}

Figure (\ref{hasseEx}) depicts the Hasse diagrams of the exceptional root system $\Ge_2$ for both its fundamental weights, the Dynkin diagram is added to the picture together with the corresponding set of weights  in each case.

\begin{figure}
\begin{tikzpicture}[baseline= (a).base]
\node[scale=1]  (a) at (-2,0.5){
\begin{tikzcd}[column sep=small,]
 & \circ \arrow[d, "\bb"]&  \\ 
 & \circ \arrow[d, dash, "\aa"]&  \\ 
 & \circ \arrow[d, dash, "\aa"]&   \\
 & \circ \arrow[dl, dash, swap, "\bb"] \arrow[dr, dash,"\aa"]  & \\
  \circ  \arrow[dr, dash, swap,  "\aa"] &  & \circ \arrow[dl, dash, "\bb"]   \\
  & \circ \arrow[dr,dash, "\aa"] \arrow[dl,dash, swap, "\bb"] &   \\
 \circ \arrow[dr, dash, swap, "\aa"] & & \circ \arrow[dl, dash,"\bb"]   \\
  & \circ \arrow[d,dash,"\aa"]  &       \\
  & \circ \arrow[d,dash,"\aa"]  &       \\
  & \circ \arrow[d,dash,"\bb"]  &       \\
  & \circ&
\end{tikzcd}};

\node[scale=1] at (-2,-6){$\hasse{\Ge_2}{\peso_\bb}$};
\node[scale=1] at (2,0){
\begin{tikzcd}[column sep=small]
 & \circ \arrow[d, "\aa"]& & \\ 
  & \circ \arrow[d, "\bb"]& & \\
 & \circ\arrow[d, "\aa"]&  & \\
 & \circ \arrow[d, "\aa"]& & \\ 
  & \circ \arrow[d, "\bb"]& & \\
 & \circ\arrow[d, "\aa"]&  & \\
 & \circ  & &
\end{tikzcd}};\node[scale=1] at (2,-6){$\hasse{\Ge_2}{\peso_\aa}$};

\node[scale=1] at (0,-6.5){\dynkin[labels={\bb,\aa},scale=1.4] G2};

\node[scale=1] (b) at (4,0) {\begin{tikzpicture}\begin{rootSystem}{G}
\node[right] at (hex cs:x=1,y=0){\small\(\peso_\aa\)};
\roots

\wt [black]{1}{0}
\wt [black]{-1}{1}
\wt [black]{-1}{0}
\wt [black]{0}{0}
\wt [black]{1}{-1}
\wt [black]{-1}{1}
\wt [black]{2}{-1}
\wt [black]{-2}{1}
\WeylChamber
\end{rootSystem}\end{tikzpicture}};

\node at (4,-6) {$\poids_{\Ge_2}(\peso_\aa)$};

\node[scale=1] at (-5,0) {\begin{tikzpicture}\begin{rootSystem}{G}

\node[right] at (hex cs:x=1,y=1){\small\(\peso_\bb\)};
\roots
\wt [black]{0}{0}
\wt [black]{1}{0}
\wt [black]{-1}{1}
\wt [black]{-1}{0}
\wt [multiplicity=2,black]{0}{0}
\wt [black]{1}{-1}
\wt [black]{-1}{1}
\wt [black]{2}{-1}
\wt [black]{-2}{1}
\wt [black]{0}{1}
\wt [black]{0}{-1}
\wt [black]{3}{-1}
\wt [black]{-3}{2}
\wt [black]{-3}{1}
\wt [black]{3}{-2}

\WeylChamber
\end{rootSystem}\end{tikzpicture}};

\node at (-5,-6) {$\poids_{\Ge_2}(\peso_\bb)$};

\end{tikzpicture}\caption{Hasse diagrams for fundamental weights of (extremal) roots of $\Ge_2$, together with the corresponding weight sets (in black).}\label{hasseEx}
\end{figure}

\subsection{Maps between diagrams}\label{maps}

Given two root systems of types $\sf J$ and $\sf L,$ consider a function $f:\simple_{\sf L}\to\simple_{\sf J}.$ We will define a \emph{diagram map with labeling $f$}, in short a \emph{diagram map}, between two Hasse diagrams as a function $\T^f:\hasse{\sf L}\chi\to\hasse{\sf J}{\chi'}$ such that if $\psi_0,\psi_1\in\hasse{\sf L}{\chi}$ then $$\psi_0-\psi_1\in\simple_{\sf L}\textrm{ implies }\T^f(\psi_0)-\T^f(\psi_1)=f(\psi_0-\psi_1)\in\simple_{\sf J}.$$

Such a map is thus order preserving, level and labeling equivariant. We say that $\T^f$ is \emph{surjective} if it is set-wise surjective. If this is the case, then necessarily $f$ is surjective and both diagrams have the same total number of levels.

Let us emphasize that the function $f$ is merely a set-wise function, no condition on the associated function between the Dynkin diagrams is required.

\begin{ex}\label{Inv}Consider the following Dynkin diagrams that carry a non-trivial involution, $\inv_0:\simple_{\sf L}\to\simple_{\sf L}$ say,  

\begin{itemize}\item[-] the middle point symmetry in $\A_\ell$: $\begin{dynkinDiagram}A{}\draw[thick] (root 1) to [out=-45, in=-135] (root 4);\draw[thick] (root 2) to [out=-45, in=-135] (root 3);\end{dynkinDiagram},$

\item[-]$\D_n:$ $\begin{dynkinDiagram}D{}\draw[thick] (root 5) to [out=-45, in=45] (root 6);\end{dynkinDiagram},$ 

\item[-] the middle axis symmetry in $\EE_6$: $\begin{dynkinDiagram}E6\draw[thick] (root 1) to [out=-45, in=-135] (root 6);\draw[thick] (root 3) to [out=-45, in=-135] (root 5);\end{dynkinDiagram}.$
\end{itemize}

The quotient by the orbits of $\inv_0$ provides a labeling\begin{itemize}\item[-] $f:\simple_{\A_{2n+1}}\to\simple_{\B_n},$\item[-]$f:\simple_{\D_{n}}\to\simple_{\Ce_n},$\item[-]$f:\simple_{\EE_6}\to\simple_{\F_4},$\end{itemize} which induces surjective maps between the Hasse diagrams of the fundamental weight $\peso_{\sroot}$ of a given simple root and the fundamental weight of $f(\sroot).$ Figure (\ref{E6-F4}) in the appendix depicts the $\EE_6$ case for one of the extremal roots.
\end{ex}




Not every example comes from the fixed point set of an involution, as the fundamental representation $\Fund_{\peso_{\aa}}:\ge_2\to\sl_7(\R)$ of the real split Lie algebra $\ge_2$ shows. This is depicted in Figure (\ref{knappsl}).

\begin{figure}
\begin{tikzpicture}
\node[scale=1] (a) at (-1.5,0){\begin{tikzcd}[column sep=small]
 & \circ \arrow[d, "\bb_1"]& & \\ 
  & \circ \arrow[d, "\bb_2"]& & \\
 & \circ\arrow[d, "\bb_3"]&  & \\
 & \circ \arrow[d, "\bb_4"]& & \\ 
  & \circ \arrow[d, "\bb_5"]& & \\
 & \circ\arrow[d, "\bb_6"]&  & \\
 & \circ  & &
\end{tikzcd}};\node[scale=1] at (-1.5,-4){$\hasse{\A_{6}}{\peso_{\bb_1}}$};

\node[scale=1] at (-1.5,-4.7){\dynkin[labels={\bb_1,,,\bb_6},scale=1.4] A{}};

\node[scale=1] (b) at (2,0){\begin{tikzcd}[column sep=small]
 & \circ \arrow[d, "\aa"]& & \\ 
  & \circ \arrow[d, "\bb"]& & \\
 & \circ\arrow[d, "\aa"]&  & \\
 & \circ \arrow[d, "\aa"]& & \\ 
  & \circ \arrow[d, "\bb"]& & \\
 & \circ\arrow[d, "\aa"]&  & \\
 & \circ  & &
\end{tikzcd}
};
\node[scale=1] at (2,-4){$\hasse{\Ge_2}{\peso_\aa}$};

\node[scale=1] at (2,-4.7){\dynkin[labels={\bb,\aa},scale=1.4] G2};

\draw[scale=1,->] (a) -- (b) node[midway,above] {$\T^f$};

\end{tikzpicture}\caption{The surjective map $\hasse{\A_{6}}{\peso_{\bb_1}}\to\hasse{\Ge_2}{\peso_\aa}.$}\label{knappsl}
\end{figure}

The existence of a surjective map between Hasse diagrams is of course very restrictive as the following lemma shows.

\begin{lemma}\label{diagramasClasif}Consider two irreducible reduced root systems of types $\sf J$ and $\sf L.$ Assume there exists \begin{itemize}\item[-]$f:\simple_{\sf L}\to\simple_{\sf J}$ such that $f(\aa)$ is extremal for every extremal $\aa\in\simple_{\sf L},$ \item[-] for every extremal $\aa$ a surjective diagram map $\T^f:\hasse{\sf L}{\peso_\aa}\to\hasse{\sf J}{\peso_{f(\aa)}}$ with labeling $f.$ \end{itemize} Then, besides $f=$identity, the only possibilities for $\sf J,$ $\sf L,$ and $f$ are listed in Table \ref{duquesa}.
\end{lemma}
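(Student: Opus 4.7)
The plan is to reduce the classification to a small finite case analysis by extracting a few numerical invariants from the existence of a surjective diagram map, and then to verify each surviving candidate by a direct construction.

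First I would record three immediate consequences of the surjectivity hypothesis. Set-theoretic surjectivity of $\T^f$ forces $f$ itself to be surjective, hence $\rk\sf{J}\le\rk\sf{L}$, and by the extremality hypothesis the number of extremal simple roots of $\sf{J}$ is bounded by that of $\sf{L}$ (so in particular $\sf{J}$ cannot be of type $\D$ or $\EE$ unless $\sf{L}$ has at least three extremal roots). Moreover $\T^f$ is level-preserving: each edge decreases the level by one, so $\hasse{\sf{L}}{\peso_\aa}$ and $\hasse{\sf{J}}{\peso_{f(\aa)}}$ must have the same total number of levels for every extremal $\aa\in\simple_{\sf{L}}$. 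Finally, surjectivity gives an inequality between the total vertex counts, i.e.\ between the dimensions of the fundamental representations $V_{\peso_\aa}$ of $\sf{L}$ and $V_{\peso_{f(\aa)}}$ of $\sf{J}$.

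Second, I would tabulate the level count $1+\het(\peso_\aa-w_0\peso_\aa)$ of $\hasse{\sf{L}}{\peso_\aa}$ across all irreducible reduced types $\sf{L}$ and all extremal $\aa$. These are well-known small integers (for example, $d$ for $\A_{d-1}$ at either extremal root, $2n$ for $\Ce_n$ at the long extremal, etc.), and likewise for dimensions. Listing the pairs $(\sf{L},\aa)$ and $(\sf{J},\bb)$ whose level counts agree across distinct types and whose extremal-root counts are compatible leaves only a short list of a priori candidates which, up to a handful to be ruled out, essentially coincides with table \ref{duquesa}.

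Third, for each surviving candidate I would attempt to build the required family of surjective diagram maps by induction on level. Starting from the paired highest weights, at each step I use Proposition \ref{cuerda} and Remark \ref{generador} to enumerate the vertices of the next level in both diagrams and propagate the labeling $f$ along the edges: once $\T^f$ is fixed on the upper endpoint of an edge, the labeling axiom forces its value on the lower endpoint to be $\T^f(\text{upper})-f(\text{edge label})$. The construction succeeds exactly when this forced assignment is globally consistent (distinct edges into a common vertex agree) and the resulting level-$k$ map is surjective onto level $k$ of $\hasse{\sf{J}}{\peso_{f(\aa)}}$. In the admissible cases such a map exists and arises either from the folding involutions $\inv_0$ of Example \ref{Inv} or from the sporadic constructions involving $\Ge_2$; in the inadmissible cases the propagation produces two conflicting labels on a common vertex, or leaves some target vertex un-hit.

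The main obstacle is the sheer combinatorial volume of step three, especially for the exceptional types $\Ge_2,\F_4,\EE_6$ where the Hasse diagrams are large and the folding is less transparent than the classical $\A\to\Ce$, $\A\to\B$ and $\D\to\B$ cases. This is the reason the author defers the explicit verification to appendix \S\ref{figurasdiagramas}, where each entry of table \ref{duquesa} is exhibited with its diagram map and every excluded candidate is eliminated by exhibiting the specific vertex or edge at which the inductive construction breaks down.
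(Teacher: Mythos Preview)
Your approach is essentially the paper's: reduce to numerical invariants (rank, total number of levels) and then do a case-by-case inspection of the Hasse diagrams, which is exactly what is carried out in appendix \S\ref{figurasdiagramas}.

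Two points are worth correcting or sharpening. First, your claim that ``the number of extremal simple roots of $\sf J$ is bounded by that of $\sf L$'' does not follow from the hypothesis. The assumption is only that $f$ sends extremal roots of $\sf L$ to extremal roots of $\sf J$; nothing prevents a non-extremal root of $\sf L$ from also landing on an extremal root of $\sf J$, so $\sf J$ of type $\D$ or $\EE$ is not excluded a priori when $\sf L$ has a linear Dynkin diagram. You must rule those cases out by the actual diagram analysis, as the paper does.

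Second, the paper's workhorse exclusion criterion --- which you do not state --- is a local out-degree inequality: if a level of $\hasse{\sf L}{\peso_\aa}$ contains a \emph{single} vertex $\chi$, then the number of downward edges from $\chi$ must be at least the number of downward edges from $\T^f(\chi)$ in $\hasse{\sf J}{\peso_{f(\aa)}}$. This is what disposes of most of the $\EE$ family in the appendix (e.g.\ by comparing the first level at which branching occurs) and is sharper in practice than the global dimension bound you propose. Your ``propagate and look for a conflict'' procedure would eventually detect the same obstructions, but the out-degree check lets one avoid building the map at all in the excluded cases.
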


\begin{table}[h!]
  \begin{center}
    \begin{tabular}{r|r|r} 
      $\sf L$ & $\sf J$ & fibers of $f$  \\
      \hline
      \multirow{2}{*}{$\A_{2n}$} & $\B_n$ $\forall n$ & $\begin{dynkinDiagram}A{}\draw[thick] (root 1) to [out=-45, in=-135] (root 4);\draw[thick] (root 2) to [out=-45, in=-135] (root 3);\end{dynkinDiagram}$ \\
       & $\Ge_2$ if $n=3$ & Figure (\ref{knappsl})\\\hline
     $\A_{2n-1}$ & $\Ce_{2n}$ & $\begin{dynkinDiagram}A{}\draw[thick] (root 1) to [out=-45, in=-135] (root 4);\draw[thick] (root 2) to [out=-45, in=-135] (root 3);\end{dynkinDiagram}$ \\\hline
      $\B_3$ & $\Ge_2$ & $\begin{dynkinDiagram}B3\draw[thick] (root 1) to [out=-45, in=-135] (root 3);\end{dynkinDiagram}$ \\\hline
      \multirow{4}{*}{$\D_n$} & $\B_{n-1}$ $\forall n\geq3$ & $\begin{dynkinDiagram}D{}\draw[thick] (root 5) to [out=-45, in=45] (root 6);\end{dynkinDiagram}$ \\
       & $\B_3$ if $n=4$ & $\begin{dynkinDiagram}D{4}\draw[thick] (root 1) to [out=-60, in=180] (root 4);\end{dynkinDiagram}$\\\cline{3-3}
       & \multirow{2}{*}{$\Ge_2$ if $n=4$} & $\begin{dynkinDiagram}D{4}\draw[thick] (root 1) to [out=-60, in=180] (root 4);\draw[thick] (root 3) to [out=-60, in=60] (root 4);\draw[thick] (root 1) to [out=60, in=180] (root 3);\end{dynkinDiagram}$\\\hline 
      $\EE_6$ & $\F_4$ & $\begin{dynkinDiagram}E6\draw[thick] (root 1) to [out=-45, in=-135] (root 6);\draw[thick] (root 3) to [out=-45, in=-135] (root 5);\end{dynkinDiagram}$
    \end{tabular}
    \caption{} \label{duquesa}
  \end{center}
\end{table}

\begin{proof} The proof is a case by case verification. In Appendix \ref{figurasdiagramas} we draw the Hasse diagrams for the fundamental weights of the extremal roots of all irreducible reduced root systems and the non-existence verification is also proven.
\end{proof}

To end this section we remark that when $\sf L=\D_4,$ in spite of the apparent symmetry of the $\B_3$'s given in Table (\ref{duquesa}), these correspond to different cases. If one considers the complex algebras $\so(7,\C)$ and $\so(8,\C),$ then the labelling $\begin{dynkinDiagram}D{4}\draw[thick] (root 3) to [out=-45, in=45] (root 4);\end{dynkinDiagram}$ corresponds to the representation $\so(7,\C)\to\so(8,\C)$ that stabilizes a line in $\C^8,$ whilst the labelling $\begin{dynkinDiagram}D{4}\draw[thick] (root 1) to [out=-60, in=180] (root 4);\end{dynkinDiagram}$ corresponds to the fundamental representation of $\so(7,\C)$ associated to the short root of $\B_3.$ This is an irreducible representation with image in $\so(8,\C)$ called \emph{the spin representation}, see Fulton-Harris \cite[Lecture 20, Ex. 20.38]{FultonHarris}.

\begin{figure}
\begin{tikzpicture}

\node[scale=1] (a) at (-4,0){
\begin{tikzcd}[column sep=small]
 & \circ \arrow[d, dash, "\bb"]& & \\ 
 & \circ\arrow[d, dash, "\bb_2"]&  & \\
 & \circ \arrow[dl, dash,swap,"\sroot"] \arrow[dr, dash,"\aa"]  & &\\
  \circ  \arrow[dr, dash,"\aa"] &  & \arrow[dl, dash,"\sroot"] \circ  & \\
  & \circ  \arrow[d, dash,"\bb_{2}"]&   &   \\
  & \circ\arrow[d, dash,"\bb"]& & \\
 & \circ &  &
\end{tikzcd}};
\node[scale=1] at (-4,-4){$\hasse{\D_4}{\peso_\bb}$};

\node[scale=1] at (-4,-5){\dynkin[labels={\bb,\bb_2,\sroot,\aa},scale=1.4] D4};

\node[scale=1] (b) at (-.5,0){\begin{tikzcd}[column sep=small]
 & \circ \arrow[d, "\bb"]& & \\ 
  & \circ \arrow[d, "\bb_2"]& & \\
 & \circ\arrow[d, "\aa"]&  & \\
 & \circ \arrow[d, "\aa"]& & \\ 
  & \circ \arrow[d, "\bb_2"]& & \\
 & \circ\arrow[d, "\bb"]&  & \\
 & \circ  & &
\end{tikzcd}
};
\node[scale=1] at (-.5,-4){$\hasse{\B_3}{\peso_\bb}$};

\node[scale=1] at (-.5,-5){\dynkin[labels={\bb,\bb_2,\aa},scale=1.4] B3};

\node[scale=1] at (-2,.5) {$\begin{dynkinDiagram}D{4}\draw[thick] (root 3) to [out=-60, in=60] (root 4);\end{dynkinDiagram}$}; 
\draw[scale=.4, ->] (a) -- (b);

\node[scale=1] (c) at (2,0){
\begin{tikzcd}[column sep=small]
 & \circ \arrow[d, dash, "\bb"]& & \\ 
 & \circ\arrow[d, dash, "\bb_2"]&  & \\
 & \circ \arrow[dl, dash,swap,"\sroot"] \arrow[dr, dash,"\aa"]  & &\\
  \circ  \arrow[dr, dash,"\aa"] &  & \arrow[dl, dash,"\sroot"] \circ  & \\
  & \circ  \arrow[d, dash,"\bb_{2}"]&   &   \\
  & \circ\arrow[d, dash,"\bb"]& & \\
 & \circ &  &
\end{tikzcd}};
\node[scale=1] at (2,-4){$\hasse{\D_4}{\peso_\bb}$};

\node[scale=1] at (2,-5){\dynkin[labels={\bb,\bb_2,\sroot,\aa},scale=1.4] D4};

\node[scale=1] (d) at (6,0){
\begin{tikzcd}[column sep=small]
 & \circ \arrow[d, dash, "\aa"]& & \\ 
 & \circ\arrow[d, dash, "\bb_2"]&  & \\
 & \circ \arrow[dl, dash,swap,"\aa"] \arrow[dr, dash,"\bb"]  & &\\
  \circ  \arrow[dr, dash,"\bb"] &  & \arrow[dl, dash,"\aa"] \circ  & \\
  & \circ  \arrow[d, dash,"\bb_{2}"]&   &   \\
  & \circ\arrow[d, dash,"\aa"]& & \\
 & \circ &  &
\end{tikzcd}};
\node[scale=1] at (6,-4){$\hasse{\B_3}{\peso_\aa}$};

\node[scale=1] at (6,-5){\dynkin[labels={\bb,\bb_2,\aa},scale=1.4] B3};

\node[scale=1] at (4,0.5) {$\begin{dynkinDiagram}D{4}\draw[thick] (root 1) to [out=-60, in=180] (root 4);\end{dynkinDiagram}$}; 
\draw[scale=1, ->] (c) -- (d);

\end{tikzpicture}\caption{The surjective maps $\hasse{\D_{4}}{\peso_\bb}\to\hasse{\B_3}{\peso_\bb}$ and $\hasse{\D_{4}}{\peso_\bb}\to\hasse{\B_3}{\peso_\aa}$}\label{d4b3}
\end{figure}

\section{Discrete subgroups satisfying a coherence condition w.r.t. eigenspaces}\label{discretegroups}

\subsection{Review on Lie group representations}\label{map}

Let $\sf G$ be a reductive real algebraic Lie group. If $\bar\phi:\sf G\to\GL(V)$ is a rational representation then we denote by $\phi:\ge\to\gl(V)$ the induced representation on its Lie algebra and we speak indistinctively of highest restricted weight, restricted weight spaces, etc of $\phi$ and $\bar\phi.$

One has the following proposition from Tits \cite[Theorem 7.2]{tits} that guarantees existence of representations of $\sf G,$ the reader may also check Abels-Margulis-Soifer \cite[Theorem 6.3]{ABS} We say that $\phi$ is \emph{proximal} if $\dim V^+=1$ (recall Equation \eqref{V+}).

\begin{prop}[Tits \cite{tits}]\label{FundTits} For every $\sroot\in\simple$ there exists an irreducible proximal representation of $\sf G$ whose highest restricted weight is $l\peso_\sroot$ for some $l\in\Z_{\geq1}.$ If $\ge$ is split then one can choose $l=1.$\end{prop}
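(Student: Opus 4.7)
The plan is to follow Tits' original construction, treating the split case first as it is considerably cleaner. In the split case the identification $\a\otimes\C=\frak h$ turns $\frak h$ into a Cartan subalgebra of $\ge\otimes\C$, and under this identification the restricted root system $\root$ coincides with the absolute root system. Consequently, each fundamental weight $\peso_\sroot$ is already a dominant integral weight of $\frak h$, and the classical theorem of the highest weight provides an irreducible complex representation $V(\peso_\sroot)$ of $\ge\otimes\C$ with a one-dimensional highest weight space. Since $\ge$ is split, the representation is defined over $\R$, and the algebraicity of $\sf G$ integrates it to a rational representation $\bar\phi:\sf G\to\GL(V)$. In the split case, absolute and restricted weight spaces coincide, so $V^+=V^{\peso_\sroot}$ is one-dimensional and $\bar\phi$ is proximal with highest restricted weight $\peso_\sroot$, yielding $l=1$.

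For the general reductive case, I would fix a Cartan involution $\t$ of $\ge$ and a $\t$-stable Cartan subalgebra $\frak h\supset\a$ of $\ge\otimes\C$, extend the order on $\a^*$ to $\frak h^*$ compatibly with the $\t$-action, and then lift $\peso_\sroot$ to a dominant integral weight $\tilde\lambda$ of $\frak h$ invariant under the relevant Galois action (so that the resulting complex representation descends to $\R$). Since the restriction map from $\t$-invariant weights of $\frak h^*$ to $\a^*$ has finite cokernel (both are finitely generated free $\Z$-modules), some multiple $l\peso_\sroot$ lies in its image, producing an integral dominant $\tilde\lambda$ with $\tilde\lambda|_\a=l\peso_\sroot$. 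The irreducible complex representation $V(\tilde\lambda)$ descends to an irreducible real representation of $\sf G$, and by construction its highest restricted weight is $l\peso_\sroot$.

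The main obstacle is verifying proximality in the non-split case, i.e.\ that the restricted weight space $V^{l\peso_\sroot}$ is one-dimensional and not merely the union of several absolute weight spaces lying over $l\peso_\sroot$. A naive choice of $\tilde\lambda$ typically fails this condition: the centralizer $\centro_\k(\a)$ of $\a$ in $\k$ acts on the highest restricted weight space, and this action can be non-scalar. Tits circumvents this by reading off a distinguished class of dominant weights from the Satake diagram of $\sf G$ (his \emph{$\sf G$-fundamental weights}) and choosing $l$ so that $l\peso_\sroot$ is a non-negative integral combination of these. For this delicate proximality step I would simply invoke \cite{tits} rather than reprove it, since only the existence of the representation is used in the sequel.
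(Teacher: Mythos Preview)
The paper does not prove this proposition: it is stated as a result of Tits and attributed to \cite{tits} without further argument, so there is no proof in the paper to compare your proposal against. Your sketch is a reasonable outline of how Tits' construction proceeds (and your caveat about proximality in the non-split case is apt), but for the purposes of this paper a bare citation suffices.
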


\begin{defi}For each $\sroot\in\simple$, we will fix and denote by $\Fund_\sroot:\sf G\to\GL(V_\sroot)$ a representation given by the above proposition.\end{defi}

Recall the definition of root spaces $\ge_\aa$ from \S\,\ref{remainders}. For $\aa\in\root^+$ we let $\wk{\ge}_\aa=\ge_{-\aa}$,  $\wk\n  = \bigoplus_{\aa\in\root^+}\wk\ge_{\aa}$ and we consider the opposite minimal parabolic subalgebras $\b =\ge_0\oplus\n$ and $\wk\b=\ge_0\oplus\wk{\n}.$ The \emph{minimal parabolic subgroups} are denoted by $\Borel$ and $\wk{\Borel}$ and defined as the normalizers in $\sf G$ of $\b$ and $\wk\b$ respectively. The groups $\Borel$ and $\wk{\Borel}$ are conjugated. The \emph{complete flag space of $\sf G$} is defined by $\cal F=\sf G/\Borel.$ The $\sf G$-orbit of $$\big([\Borel],[\wk{\Borel}]\big)\in\cal F\times\cal F$$ is the unique open orbit of $\sf G$ and is denoted by $\posgen.$

If $(\fund,V)$ is a proximal irreducible representation, then we let $\vt\subset\simple$ be the set of simple roots non-orthogonal to $\chi_\phi$, $$\vt=\{\sroot\in\simple:\<\chi_\phi,\sroot\>\neq0\}.$$ Consider also  the parabolic subgroup $\sf P_\vt$  whose Lie algebra is defined by $$\frak p_\vt  =\bigoplus_{\sroot\in\root^+\cup\{0\}}\frak g_{\sroot} \oplus\bigoplus_{\sroot\in\<\simple-\vt\>}\frak g_{-\sroot}.$$ The group $\sf P_\vt$ is the stabilizer in $\sf G$ of the line $V^+$.


\begin{defi}\label{type} We will say that $\vt$, or $\sf P_\vt$, is the \emph{type} of the stabilizer of $V^+
$.\end{defi}

We also consider an opposite parabolic subgroup $\wk{\sf P}_\vt$ whose Lie algebra is $$\wk{\frak p}_\vt =\bigoplus_{\sroot\in\root^+\cup\{0\}}\frak g_{-\sroot} \oplus\bigoplus_{\sroot\in\<\simple-\vt\>}\frak g_{\sroot}.$$ It is conjugated to the parabolic group $\sf P_{\ii\vt}.$ We denote the \emph{flag space} associated to $\vt$ by $\cal F_\vt=\sf G/\sf P_\vt.$ The $\sf G$ orbit of the pair $([\sf P_{\vt}],[\check{\sf P}_{\vt}])$ is the unique open orbit for the action of $\sf G$ in the product $\cal F_\vt\times\cal F_{\ii\vt}$ and is denoted by $\posgen_\vt.$

One has a $\Fund$-equivariant algebraic map $$\mapa=\mapa_{\Fund}:\cal F_\vt\to\P(V)$$ defined by $\mapa_{\Fund}(g[\sf P_\vt])=\Fund(g)V^+.$ The $\fund(\a)$-invariant complement $$V^-:=\bigoplus_{\chi\in\poids(\phi)-\{\chi_\phi\}}V^\chi$$ is stabilized by $\wk{\sf P}_\t,$ giving also a map $\wk{\mapa}=\wk{\mapa}_{\Fund}:\cal F_{\ii\vt}\to\P(V^*)$ defined by $\wk\mapa(g\cdot[\wk{\sf P}_\vt])=\Fund(g)V^-$, where  we have used the natural identification between $\P(V^*)$ and $\mathrm{Gr}_{\dim V-1}(V)$ given by $\R\varphi\mapsto\ker\varphi$.

\subsection{Jordan-Kostant-Lyapunov's projection and Benoist's limit cone}

Recall that every element $h\in\sf G$ can be uniquely written as a commuting product $h=h_eh_{ss}h_n$ where $h_e$ is conjugate to an element in $\sf K,$ $h_{ss}$ is conjugate to an element in $\exp(\a^+)$ and $h_n$ is unipotent. The \emph{Jordan-Kostant-Lyapunov projection} $\lambda=\lambda_{\sf G}:\sf G\to\a^+$ is defined such that $h_{ss}$ is conjugated to $\exp\big(\lambda(h)\big).$

If $\grupo\subset\sf G$ is a discrete subgroup, then its \emph{limit cone} is denoted by $\Bcone_\grupo$ and is defined as the smallest closed cone that contains $\{\lambda(g):g\in\grupo\}.$ One has the following fundamental result by Benoist. Recall that $\a_{ss}=\a\cap\ge_{ss}.$

\begin{thm}[{Benoist \cite[Théorème 1.2]{limite}}]\label{Bcono} Let $\grupo<\sf G$ be a Zariski dense subgroup. Then the limit cone $\Bcone_\grupo$ is convex and the intersection $\Bcone_\grupo\cap\a_{ss}$ has non-empty interior in $\a_{ss}.$
\end{thm}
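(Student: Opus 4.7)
The strategy has two essentially independent parts: convexity of $\Bcone_\grupo$ and the non-empty interior property in $\a_{ss}$. My plan is to reduce both to dynamical/ping-pong statements about $\root^+$-loxodromic elements of $\grupo$, exploiting Zariski density to produce elements with prescribed dynamics, and then to read off the Jordan-Kostant-Lyapunov projection via the family of Tits representations $\Fund_\sroot:\sf G\to\GL(V_\sroot)$ from Proposition \ref{FundTits}.

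\emph{Convexity.} First I would establish that $\grupo$ contains elements that are proximal on every $V_\sroot$ simultaneously (equivalently, proximal on $\cal F$); this follows from Zariski density together with the fact that the proximal locus is Zariski open and non-empty in $\sf G$. Given two such elements $h_1,h_2$, I would pick, again by Zariski density, a conjugating element $g\in\grupo$ so that, for every $\sroot\in\simple$, the attracting line $\mapa_{\Fund_\sroot}(h_1^+)$ is in general position with respect to the repelling hyperplane $\wk\mapa_{\Fund_\sroot}(gh_2^-)$, and symmetrically. A standard north-south contraction argument performed simultaneously in all representations $V_\sroot$ then shows that for large $n,m$ the element $h_1^n g h_2^m g^{-1}$ is itself loxodromic with
\[
\peso_\sroot\!\bigl(\lambda(h_1^n g h_2^m g^{-1})\bigr)=n\peso_\sroot\bigl(\lambda(h_1)\bigr)+m\peso_\sroot\bigl(\lambda(h_2)\bigr)+O(1)
\]
uniformly in $n,m$. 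As the fundamental weights $\{\peso_\sroot\}$ separate points in $\a$, this places $n\lambda(h_1)+m\lambda(h_2)$ within bounded distance of $\{\lambda(\g):\g\in\grupo\}$, and the closure/cone property forces the ray through $\lambda(h_1)+\lambda(h_2)$ to lie in $\Bcone_\grupo$. Varying $n,m$ in $\Z_{\geq1}$ gives all positive rational combinations and hence convexity.

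\emph{Non-empty interior.} By the previous paragraph it suffices to exhibit loxodromic $h_1,\dots,h_N\in\grupo$ whose projections $\lambda(h_i)$ span $\a_{ss}$ as a real vector space. I would argue that the map $\lambda:\sf G\to\a^+$, restricted to the Zariski open set of $\root^+$-loxodromic elements, is a submersion onto the interior of $\a^+$ at any regular loxodromic point, because one can perturb $h$ inside $\sf G$ to move $\lambda(h)$ freely in the Weyl chamber. Intersecting with the Zariski dense subgroup $\grupo$ via a Baire argument, supplemented by small perturbations through conjugations within $\grupo$, yields loxodromic elements in $\grupo$ whose projections are arbitrarily close to any prescribed regular vector of $\a^+$; in particular one obtains a spanning family in $\a_{ss}$ (after projecting out the center of $\ge$, which $\lambda$ annihilates on the commutator).

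\emph{Main obstacle.} The technical heart of the argument is the uniform estimate controlling $\lambda(h_1^n g h_2^m g^{-1})$ by $n\lambda(h_1)+m\lambda(h_2)$ across \emph{all} simple representations at once, rather than just one. A single Tits representation $\Fund_\sroot$ only records the $\peso_\sroot$-component of the Jordan projection, so one must verify that the choice of conjugating element $g$ can be made so that the transversality required for the north-south dynamics holds simultaneously in every $V_\sroot$; Zariski density in $\sf G$ makes this possible, but the quantitative packaging — showing that the error term $O(1)$ is independent of $n,m$ and uniform across $\simple$ — is exactly where Benoist's original argument concentrates its work.
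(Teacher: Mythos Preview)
The paper does not contain a proof of this statement: Theorem~\ref{Bcono} is quoted from Benoist \cite{limite} and used as a black box (in Lemma~\ref{raiz1} and Proposition~\ref{existenMapas}). So there is no ``paper's own proof'' to compare against; your proposal should be read as an attempted reconstruction of Benoist's argument.

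On its own merits: your convexity sketch is the standard Benoist approach and is sound in outline --- simultaneous proximality in all $\Fund_\sroot$, genericity of a conjugator, and the additive estimate $\lambda(h_1^n g h_2^m g^{-1})=n\lambda(h_1)+m\lambda(h_2)+O(1)$ read off in each fundamental representation. The place where your proposal breaks down is the non-empty interior part. Two concrete issues:
\begin{itemize}
\item[(i)] Zariski density of $\grupo$ in $\sf G$ gives no control in the Hausdorff topology. The set $\grupo$ is countable, so no Baire argument can force $\lambda(\grupo)$ to approach a prescribed regular vector of $\a^+$; indeed the whole point of the theorem is that $\Bcone_\grupo$ is typically a \emph{proper} sub-cone of $\a^+$, so ``arbitrarily close to any prescribed regular vector'' is simply false in general.
\item[(ii)] ``Small perturbations through conjugations within $\grupo$'' cannot help, because $\lambda$ is a class function: $\lambda(g\gamma g^{-1})=\lambda(\gamma)$ for every $g$. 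Conjugation moves attracting/repelling flags, not Jordan projections.
\end{itemize}
Benoist's actual route to non-empty interior is different: assuming $\Bcone_\grupo$ lies in a hyperplane $\ker\chi\subset\a_{ss}$, he combines the product estimate from the convexity step with a Zariski-density argument applied to an \emph{algebraic} condition (coming from the spectral radii in the $\Fund_\sroot$) to derive a contradiction. The submersion/perturbation picture you propose does not survive the passage from $\sf G$ to the discrete subgroup $\grupo$.
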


\subsection{Coherent subgroups} For $g\in\GL_d(\R)$ let us denote by $$\lambda(g)=\big(\lambda_1(g),\cdots,\lambda_d(g)\big)\in\a^+$$ its Jordan projection. By definition, the coordinates of $\lambda(g)$ are the logarithms of the modulus of the eigenvalues of $g$, counted with multiplicity and in decreasing order. If $\lambda_1(g)>\lambda_2(g)$ we say that $g$ is \emph{proximal}. Equivalently, the generalized eigenspace associated to the greatest (in modulus) eigenvalue of $g$ is $1$-dimensional. We will denote by $g_+\in\P(\R^d)$ this attracting eigenline and by $g_-$ its $g$-invariant complementary subspace.

A discrete subgroup $\grupo<\PGL_d(\R)$ is \emph{proximal} if it contains a proximal element. One defines then its \emph{limit set} by $$\Lim\grupo^\P=\overline{\{g_+:g\in\grupo\textrm{ proximal}\}}.$$

Recall from the introduction that $\Lim\grupo^\P$ is \emph{minimal} if the only closed $\grupo$-invariant subsets of $\Lim\grupo^\P$ are $\{\emptyset,\Lim\grupo^\P\}.$

\begin{lemma}\label{red-irr}Let $\grupo<\PGL_d(\R)$ be proximal with minimal $\Lim\grupo^\P.$ If $\grupo$ acts totally reducibly in $\R^d$ then $\spa\Lim\grupo^\P$ is an irreducible factor of $\grupo.$\end{lemma}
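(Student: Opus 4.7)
The plan is to combine three ingredients: total reducibility to decompose $\R^d=V_1\oplus\cdots\oplus V_k$ into $\grupo$-irreducible subspaces, proximality to locate attracting eigenlines inside these summands, and minimality of $\Lim\grupo^\P$ to isolate a single summand.

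First I would verify that for every proximal $g\in\grupo$ the attracting line $g_+$ lies in a unique summand $\P(V_{i(g)})$. Since each $V_i$ is $g$-invariant, the spectrum of $g$ is the disjoint union (with multiplicity) of the spectra of $g|_{V_i}$, and in particular $\lambda_1(g)=\max_i\lambda_1(g|_{V_i})$. As $g$ is proximal on $\R^d$, the generalized eigenspace for $\lambda_1(g)$ is one-dimensional, so this maximum is attained for a unique index $i(g)$, and the top generalized eigenspace of $g|_{V_{i(g)}}$ coincides with $g_+$. In particular $\Lim\grupo^\P\subset\bigcup_{i} \P(V_i)$, and the union is disjoint because $V_i\cap V_j=\{0\}$ for $i\neq j$.

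Next, since the $\P(V_i)$ are pairwise disjoint closed subsets of $\P(\R^d)$ and each is $\grupo$-invariant (the $V_i$ being $\grupo$-invariant), the intersections $\Lim\grupo^\P\cap\P(V_i)$ form a finite partition of $\Lim\grupo^\P$ by closed $\grupo$-invariant subsets. Proximality of $\grupo$ guarantees $\Lim\grupo^\P\neq\emptyset$, so by minimality exactly one of these intersections equals the whole limit set; call the corresponding summand $V_{i_0}$. Consequently $\spa\Lim\grupo^\P\subset V_{i_0}$, and as $\spa\Lim\grupo^\P$ is a non-zero $\grupo$-invariant subspace of the $\grupo$-irreducible $V_{i_0}$, it must equal $V_{i_0}$.

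I do not anticipate any serious obstacle here. The only point that requires a moment of care is the localization step, where one must use the one-dimensionality of the top generalized eigenspace to conclude that $g_+$ lives in a single irreducible factor rather than being spread across several of them. Everything else is then a direct consequence of minimality and of the irreducibility of the chosen factor.
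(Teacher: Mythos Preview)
Your proof is correct and follows essentially the same strategy as the paper: localize $g_+$ in a single irreducible summand, then invoke minimality and irreducibility. The only cosmetic difference is in the localization step---the paper argues dynamically (if $g_+\notin V$ then $V\subset g_-$, since otherwise $g^n(\R v)\to g_+$ would force $g_+\in V$), whereas you argue spectrally via the decomposition of eigenvalues across the summands; both yield the same conclusion.
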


\begin{proof} Let $g\in\grupo$ be proximal and $V$ an irreducible factor. If $v\in V$ does not lie in $g_-$ then $g^n(\R\cdot v)\to g_+.$ Consequently, since $V$ is closed and $g$-invariant, if $g_+\notin V$ one concludes $V\subset g_-.$ Thus, $g_+$ necessarily belongs to an irreducible factor of $\grupo,$ $W$ say. The subset $\Lim\grupo^\P\cap\P(W)$ is then non-empty, closed and $\grupo$-invariant. Minimality completes the proof.\end{proof} 

\begin{defi}\label{coherente} A discrete subgroup $\grupo<\PGL_d(\R)$ is \emph{coherent} if \begin{itemize}\item[-]there exists a proximal $g_0\in\grupo$ such that $\wedge^2g_0$ is proximal and the eigenline associated to $\lambda_2(g_0)$ belongs to $\spa\Lim\grupo^\P,$
\item[-] the limit sets $\Lim\grupo^\P$ and $\Lim{\wedge^2\grupo}^\P$ are minimal.\end{itemize}\end{defi}

\begin{ex} The typical example of a coherent group are the so-called $(1,1,2)$-hyperconvex representations from Pozzetti-S.-Wienhard \cite{PSW1}.
\end{ex}

The main feature of coherence one should keep in mind is that, necessarily, the generalized eigenspace $V_2(g_0)$ of $g_0$ associated to $\lambda_2(g_0)$ is one dimensional, and both lines $(g_0)_+$ and 
$V_2(g_0)$ lie in the same irreducible factor of $\grupo$ on $\R^d$. This will be further explained in the proof of the following Lemma.

\begin{lemma}\label{raiz1}Let $\grupo<\PGL_d(\R)$ be a coherent subgroup with reductive Zariski closure $\sf H$ and let $\h=\lie(\sf H)$. Then there exists a unique $\sroot\in\simple_\h$ such that for every $g\in\grupo$ one has $$\sroot\big(\jordan_{\sf H}(g)\big)=\lambda_1(g)-\lambda_2(g).$$ Moreover $\dim \h_\sroot=1$.
\end{lemma}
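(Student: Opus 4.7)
The plan is to identify $\sroot$ as the simple root corresponding to the second-highest restricted weight of the $\sf H$-irreducible action on $V:=\spa\Lim\grupo^\P$, establish $\dim\h_\sroot=1$ via an $\sl_2$-triple argument built from the Cartan involution, and extend the resulting functional equation from proximal elements to all of $\grupo$ using Benoist's limit cone. I begin by applying Lemma \ref{red-irr} to both $\grupo$ and $\wedge^2\grupo$: this identifies $V$ as an $\sf H$-irreducible factor of $\R^d$, via an irreducible representation $\phi:\sf H\to\GL(V)$ with highest weight $\chi_1$, and realises $W:=\spa\Lim{\wedge^2\grupo}^\P$ as an irreducible factor of $\wedge^2\R^d$. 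Coherence places the $\lambda_2(g_0)$-eigenline $v_2$ inside $V$, so the top eigenline $g_{0+}\wedge v_2$ of $\wedge^2 g_0$ lies in $\wedge^2 V$, forcing $W\subseteq \wedge^2 V$. Proximality of $\wedge^2 g_0$ imposes multiplicity one on the eigenvalue $\lambda_1(g_0)\lambda_2(g_0)$ for $\wedge^2\R^d$, which translates to the existence of a unique weight $\chi_2\in\poids(\phi)$ attaining the value $\lambda_2(g_0)$ at $a_0:=\jordan_{\sf H}(g_0)$, and to $\dim V^{\chi_2}=1$.

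I then show $\chi_2=\chi_1-\sroot$ for some $\sroot\in\simple_\h$. Writing $\chi_2=\chi_1-\bb_0-\cdots-\bb_\ell$ via Remark \ref{generador}, so that every partial sum is a weight, one has $\bb_0(a_0)\leq\sum_j\bb_j(a_0)=(\chi_1-\chi_2)(a_0)$ since $a_0$ lies in the closed Weyl chamber. Strict inequality would insert the weight $\chi_1-\bb_0$ strictly between $\chi_1$ and $\chi_2$ along the Jordan direction, contradicting the defining property of $\chi_2$. Equality combined with $\ell\geq 1$ would produce a weight $\chi_1-\bb_0$ distinct from $\chi_2$ yet attaining the value $\chi_2(a_0)$ at $a_0$, contradicting one-dimensionality of the $\lambda_2(g_0)$-eigenspace. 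Hence $\ell=0$ and $\sroot:=\bb_0\in\simple_\h$ works.

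Next I establish $\dim\h_\sroot=1$. The surjection $\h_{-\sroot}\twoheadrightarrow V^{\chi_2}$, $y\mapsto\phi(y)v_0$ (for $0\neq v_0\in V^{\chi_1}$), has a one-dimensional target, so injectivity is the whole content. Given any $0\neq y\in\h_{-\sroot}$ with $\phi(y)v_0=0$, set $x=\cartan(y)\in\h_\sroot$; the decomposition $y=y_\k+y_\p$ yields $[x,y]=2[y_\k,y_\p]\in\p\cap\h_0=\a$, which is a non-zero multiple of $t_\sroot$. Rescaling $x$ produces a bona fide $\sl_2$-triple $(x,h_\sroot,y)$ inside $\h$. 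Since $\chi_1+\sroot\notin\poids(\phi)$, one also has $\phi(x)v_0=0$, so $v_0$ spans a one-dimensional $\sl_2$-submodule; its $h_\sroot$-weight $\<\chi_1,\sroot\>$ must therefore vanish, contradicting the lower bound $\<\chi_1,\sroot\>\geq 1$ forced by Proposition \ref{cuerda} together with $\chi_1-\sroot\in\poids(\phi)$.

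Finally, for the identity on all of $\grupo$, proximal elements of $\grupo$ are Zariski dense in $\sf H$ and their Jordan projections are dense in the limit cone $\Bcone_\grupo$ by Theorem \ref{Bcono}. For each such $g$, the fixed line $g_+\in\Lim\grupo^\P\subset V$ necessarily lies in $V^{\chi_1}$, giving $\lambda_1(g)=\chi_1\big(\jordan_{\sf H}(g)\big)$; linearity of $\chi_1-\chi$ for every weight $\chi$ of $\sf H$ on $\R^d$ propagates the pointwise inequality $\chi_1\geq\chi$ from this dense subset to all of $\Bcone_\grupo$, and hence to every $\jordan_{\sf H}(g)$ with $g\in\grupo$. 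Applying the same argument to $W\subseteq\wedge^2\R^d$ yields $\lambda_1(g)+\lambda_2(g)=(\chi_1+\chi_2)\big(\jordan_{\sf H}(g)\big)$, and subtraction gives the desired identity. I expect step three to be the most delicate: one must use the Cartan involution to exhibit an $\sl_2$-triple with $h\in\a$ for an arbitrary non-zero element of $\h_{-\sroot}$, without assuming $\h$ is split.
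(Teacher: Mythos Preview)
Your argument is essentially correct but follows a genuinely different route from the paper's.

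The paper works with the highest weights $\chi_1,\chi_2$ of the two irreducible factors $V=\spa\Lim\grupo^\P$ and $W=\spa\Lim{\wedge^2\grupo}^\P$, and first establishes via a minimality argument (with the closed invariant sets $\widetilde{W_i}=\{P\in\Lim{\wedge^2\grupo}^\P:P\cap W_i\neq 0\}$) that $V_2(g)\subset V$ for \emph{every} $g$ with proximal $\wedge^2 g$, not just $g_0$. This gives a simple root $\aa_g\in\simple_\h$ for each such $g$, and the single $\sroot$ is then extracted by the pigeonhole/open-cone argument using Benoist's Theorem~\ref{Bcono}: the finitely many linear forms $\aa-(2\chi_1-\chi_2)$ cannot all be non-zero if their zero sets must cover a set with non-empty interior. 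You instead pin down $\sroot$ directly from the single witness $g_0$: proximality of $\wedge^2 g_0$ forces the $e^{\lambda_2(g_0)}$-modulus generalized eigenspace on $V$ to be one-dimensional, hence a unique weight $\chi_1-\sroot$ with one-dimensional weight space, and your string argument via Remark~\ref{generador} is clean.

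Your $\sl_2$-triple argument for $\dim\h_\sroot=1$ (building the triple from an arbitrary $y\in\h_{-\sroot}$ via the Cartan involution) is correct and is in fact a point the paper's proof does not spell out; the paper asserts $\dim\h_\sroot=1$ in the statement but the written proof only establishes the functional identity $\sroot=2\chi_1-\chi_2$. So your approach buys an explicit justification of that clause.

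Two minor corrections to your final step. First, the claim ``$g_+\in V^{\chi_1}$'' is false in general (the weight space is for $\a_\h$, not for $g$); what you need and use is only $g_+\in V$, which already gives $\lambda_1(g)=\chi_1(\jordan_{\sf H}(g))$. Second, Theorem~\ref{Bcono} as stated gives non-empty interior of $\Bcone_\grupo$, not density of proximal Jordan projections; the extension to all $g\in\grupo$ is more naturally obtained as in the paper, by first showing the identity for every $g$ with proximal $\wedge^2 g$ (which requires the minimality argument placing $V_2(g)$ inside $V$ for all such $g$, not just $g_0$) and then using that these cover enough of $\Bcone_\grupo$ to force the linear identity $\sroot=2\chi_1-\chi_2$.
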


\begin{proof}By Lemma \ref{red-irr} the representations $\sf H|\spa\Lim\grupo^\P$ and $\sf H|\spa\Lim{\wedge^2\grupo}^\P$ are irreducible. Let $\chi_1$ and $\chi_2$ be their highest restricted weights, then $2\chi_1-\chi_2$ verifies that for all $g\in\grupo$ one has $2\chi_1-\chi_2(\jordan_{\sf H}(g))=\lambda_1(g)-\lambda_2(g).$

Denote by $\{W_i\}_1^k$ the irreducible factors of $\grupo$ enumerated so that  $W_1=\spa\Lim\grupo^\P$. For $g\in\grupo$ with $\wedge^2g$ proximal, denote by $V_2(g)$ either the eigenline associated to $\lambda_2(g)$ if $g$ is proximal, or the $2$-dimensional Jordan block associated to $\lambda_1(g)$ otherwise. One readily sees that, in both situations, the vector space $V_2(g)$ necessarily intersects one of the $W_i$'s. 

We can identify $\Lim{\wedge^2\grupo}^\P$ as a subset of $\grass_2(\R^d)$ and thus consider the closed $\wedge^2\grupo$-invariant subsets $$\LL_i=\big\{P\in\Lim{\wedge^2\grupo}^\P:P\cap W_i\neq\{0\}\big\}.$$The intersections $\LL_i\cap\LL_j$ are also invariant and closed so by minimality, each intersection is either empty or $\Lim{\wedge^2\grupo}^\P.$ However, the element $g_0$ from the definition of coherence is proximal with $\wedge^2g_0$ proximal, so its attracting line $(\wedge^2g_0)_+\in\Lim{\wedge^2\grupo}^\P$  is  $g_0\oplus V_2(g_0) \in\grass_2(\R^d)$. This latter plane is, by assumption, contained in $W_1=\spa\Lim\grupo^\P$, which yields \begin{itemize}\item[-] $\LL_1=\Lim{\wedge^2\grupo}$ and \item[-] all intersections $\LL_1\cap\LL_j$, for $j>1$, are empty.\end{itemize}
We conclude that $V_2(g)\subset\spa\Lim\grupo^\P$ for every $g\in\grupo$ with proximal $\wedge^2g.$

Applying \S\ref{reps} to $\sf H|\spa\Lim\grupo^\P$ together with the preceding paragraph, one has that for every $g\in\grupo$ there exists $\aa_g\in\simple_\h$ such that $\aa_g(\lambda(g))=\lambda_1(g)-\lambda_2(g).$ Since the limit cone $\Bcone_\grupo$ has non-empty interior on $\a\cap\h_{ss},$ (Benoist's Theorem \ref{Bcono}) and $\simple_\h$ is a finite set, there exists an open sub-cone $\scr C\subset\Bcone_\grupo$ and a root $\sroot\in\simple_\h$ such that for every $v\in\scr C$ $$\sroot(v)=\big(2\chi_1-\chi_2\big)(v).$$ Since both functions are linear and coincide on an open set, they must coincide and $\sroot$ is the required root. The same argument gives uniqueness of $\sroot$. The fact that $\h_\sroot$ is one dimensional follows from the fact that, for every $g\in\grupo$, up to conjugation, one has $\h_\sroot V^+=V^{\chi_1-\sroot}$ is the eigenspace associated to $\lambda_2(g_0)$, which is one dimensional.\end{proof}

\begin{defi}  Let $\sf G$ be a reductive group and $\grupo$ a discrete subgroup. Then $\grupo$ is \emph{totally coherent} if for every $\sroot\in\simple$ the subgroup $\Fund_\sroot(\grupo)$ is coherent.
\end{defi}

The following is the main result of this section.

\begin{prop}\label{existenMapas} Let $\sf G$ be a real-algebraic simple group and $\grupo<\sf G$ a totally coherent discrete subgroup with reductive Zariski closure $\sf H.$  Then $\h_{ss}$ is simple split. Moreover, there exists a surjective function $f:\simple_\ge\to\simple_\h$ and, for every $\aa\in\simple_\ge,$ a surjective map with labeling $f$ between the diagrams $$\T^f:\hasse\ge{\ell_\aa\peso_\aa}\to\hasse\h{n_\aa\peso_{ f(\aa)}},$$ for some $n_\aa\in\Z_{\geq1}.$ If $\aa$ is extremal then $f(\aa)$ is extremal, if moreover $\rk \h_{ss}>1$, $2\aa\notin\root$ and $\ell_\aa=1$ then $n_\aa=1.$\end{prop}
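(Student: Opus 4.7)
The plan is to apply Lemma~\ref{raiz1} to each proximal representation $\Fund_\sroot(\grupo)$, $\sroot\in\simple_\ge$---each of which is coherent by total coherence of $\grupo$---and to bundle the outputs into the desired function $f$ and diagram maps. By Proposition~\ref{cuerda}, $\ell_\sroot\peso_\sroot-\sroot$ is the unique weight immediately below the highest weight of $\Fund_\sroot$, so on regular $g\in\grupo$ the eigenvalue gap satisfies $\lambda_1(\Fund_\sroot g)-\lambda_2(\Fund_\sroot g)=\sroot(\jordan_\ge g)$; Lemma~\ref{raiz1} then yields a simple root $f(\sroot)\in\simple_\h$ with one-dimensional root space $\h_{f(\sroot)}$ such that $f(\sroot)\circ\jordan_\h=\sroot\circ\jordan_\ge$ on $\grupo$. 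After picking a maximal split torus $\sf A_\h\subset\sf A_\ge$ with compatible positive Weyl chambers (so that $\jordan_\ge|_{\sf H}=\jordan_\h$ inside $\a_\h\subset\a_\ge$) this identity reads $f(\sroot)=\sroot|_{\a_\h}$ in $\a_\h^*$, which makes $f\colon\simple_\ge\to\simple_\h$ canonical.

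I will then observe that, because $\ge$ is simple, $\simple_\ge$ spans $\a_\ge^*$ and hence $f(\simple_\ge)$ spans $\a_\h^*$; as $\simple_\h$ alone only spans $\a_{\h_{ss}}^*$, this forces $\a_\h=\a_{\h_{ss}}$ (no split centre in $\sf H$), and a dimension count then makes $f$ surjective. Surjectivity together with the fact that each $\h_{f(\sroot)}$ is one-dimensional will propagate, via Weyl-group conjugacy on roots of fixed length, to $\dim\h_\beta=1$ for every root $\beta$ of $\h_{ss}$, so $\h_{ss}$ is split. To construct $\T^f$ I will take the irreducible $\h$-subrepresentation $W_\sroot\subset V_\sroot$ generated by $V_\sroot^+$---the factor singled out by Lemma~\ref{red-irr} applied to the coherent subgroup $\Fund_\sroot(\grupo)$. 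Its highest weight equals $\ell_\sroot\peso_\sroot|_{\a_\h}$, and a direct computation from the characterizing identities $f(\bb)=\bb|_{\a_\h}$ and $\<\peso_\sroot,\bb\>_\ge=d_\bb\delta_{\sroot\bb}$ shows this restriction to be a positive integer multiple $n_\sroot\peso_{f(\sroot)}$. Setting $\T^f(\chi):=\chi|_{\a_\h}$ produces a function $\hasse{\ge}{\ell_\sroot\peso_\sroot}\to\hasse{\h}{n_\sroot\peso_{f(\sroot)}}$ whose labelling property is automatic from $(\chi_0-\chi_1)|_{\a_\h}=\bb|_{\a_\h}=f(\bb)$ whenever $\chi_0-\chi_1=\bb\in\simple_\ge$, and whose set-wise surjectivity follows from Remark~\ref{generador}, as every weight of $W_\sroot$ arises as the $\a_\h$-restriction of a weight of $V_\sroot$.

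The hardest step will be the simplicity of $\h_{ss}$. Were $\h_{ss}=\h_1\oplus\h_2$ a non-trivial decomposition, surjectivity of $f$ and connectedness of the Dynkin diagram of $\ge$ would produce adjacent $\sroot\in f^{-1}(\simple_{\h_1})$ and $\bb\in f^{-1}(\simple_{\h_2})$; the weight $\ell_\sroot\peso_\sroot-\sroot-\bb$ would then be a genuine weight of $V_\sroot$ (by Proposition~\ref{cuerda}, using $\<\sroot,\bb\>_\ge\neq 0$), but its $\a_\h$-restriction $n_\sroot\peso_{f(\sroot)}-f(\sroot)-f(\bb)$ would carry a non-vanishing $\a_{\h_2}$-component, whereas every weight of $\hasse{\h}{n_\sroot\peso_{f(\sroot)}}$ vanishes on $\a_{\h_2}$; this would obstruct the very existence of $\T^f$ established above, giving the contradiction. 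Extremality of $f(\aa)$ for extremal $\aa$ will be read off the top of the diagrams: only the label $\aa$ can descend from the top of $\hasse{\ge}{\peso_\aa}$, so via $\T^f$ only $f(\aa)$ can descend from the top of $\hasse{\h}{n_\aa\peso_{f(\aa)}}$, forcing $f(\aa)$ extremal. Finally, the normalization $n_\aa=1$ under $\ell_\aa=1$ and $\rk\h>1$ will follow from a pairing computation once simplicity is in place, using that Schur's lemma makes the Killing forms of $\h_{ss}\hookrightarrow\ge$ proportional on $\h_{ss}$ (the scalar cancels in $\<\cdot,\cdot\>$) and that the root system of a split simple real Lie algebra of rank $>1$ is reduced, giving $d_{f(\aa)}=1$.
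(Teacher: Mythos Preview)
Your overall strategy---apply Lemma~\ref{raiz1} to each $\Fund_\sroot$, identify $f(\sroot)$ with $\sroot|_{\a_{\h,ss}}$, and define $\T^f$ by restriction---is the same as the paper's. However, there is a genuine gap that undermines several downstream arguments.

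The central unjustified claim is that $\T^f(\chi):=\chi|_{\a_\h}$ lands in $\hasse{\h}{n_\sroot\peso_{f(\sroot)}}=\poids_\h(W_\sroot)$. The restriction $\chi|_{\a_\h}$ is certainly a restricted weight of the full $\h$-module $V_\sroot$, but $V_\sroot$ splits into several irreducible $\h$-factors and you give no reason why $\chi|_{\a_\h}$ must be a weight of the specific factor $W_\sroot$ rather than of some other one. The paper closes this with Lemma~\ref{satu}: since every $\h$-weight of $V_\sroot$ has the form $\chi_\phi-\sum k_\bb f(\bb)$ with $k_\bb\geq0$, the highest weight of any other irreducible factor is a dominant weight $\prec\chi_\phi$ and hence already belongs to $\poids_\h(W_\sroot)$ by saturation; Weyl-invariance then forces \emph{all} weights of \emph{all} factors into $\poids_\h(W_\sroot)$. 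Without this step the target of $\T^f$ is not the claimed Hasse diagram, and your simplicity argument becomes circular---you derive a contradiction from $\T^f(\ell_\sroot\peso_\sroot-\sroot-\bb)$ lying in $\hasse{\h}{n_\sroot\peso_{f(\sroot)}}$, but that membership is exactly what remains unproved.

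Two secondary points. First, the ``direct computation'' showing the highest weight of $W_\sroot$ is $n_\sroot\peso_{f(\sroot)}$ via $\<\peso_\sroot,\bb\>_\ge=d_\bb\delta_{\sroot\bb}$ does not go through: even after Schur's proportionality of Killing forms (which itself presupposes the simplicity you prove later), the orthogonal projection $\a_\ge\to\a_{\h,ss}$ obstructs any clean identity between $\<\peso_\sroot|_{\a_\h},f(\bb)\>_\h$ and $\<\peso_\sroot,\bb\>_\ge$. The paper avoids this entirely by arguing with levels: the unique level-$2$ weight of $\fund_\aa$ forces $\<\chi_\phi,\gamma\>_\h=0$ for $\gamma\neq f(\aa)$ via Proposition~\ref{cuerda}, and when $\ell_\aa=1$ the absence of $\peso_\aa-2\aa$ at level~$3$ gives $n_\aa=1$ by the same string argument---no bilinear-form comparison is needed. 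Second, the passage from $f(\sroot)\circ\jordan_\h=\sroot\circ\jordan_\ge$ on $\grupo$ to the identity $f(\sroot)=\sroot|_{\a_{\h,ss}}$ of linear forms requires Benoist's Theorem~\ref{Bcono} (the limit cone has non-empty interior in $\a_{\h,ss}$), which you should invoke explicitly; without it the identity holds only on $\jordan_\h(\grupo)$.
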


\begin{proof} Let us denote by $\bar\iota:\sf H\to\sf G$ the representation induced by the inclusion of $\sf H$ in $\sf G$ and by $\iota:\h\to\ge$ its derivative.

Since $\grupo$ is totally coherent, applying Lemma \ref{raiz1} to each representation $\Fund_\sroot$ of $\sf G$ provides a function $f:\simple_\ge\to\simple_\h$ such that for every $g\in\grupo$ and $\sroot\in\simple_\ge$ one has \begin{equation}\label{raicesiguales0}f(\sroot)\big(\jordan_{\sf H}(g)\big)=\sroot\big(\jordan_{\sf G}(\bar{\iota}(g))\big).\end{equation}

Consider then $\aa\in\simple_\ge$ and the associated fundamental representation $\Fund_\aa:\sf G\to\GL(V).$ Since $\sf H$ is reductive, Lemma \ref{red-irr} implies that $W=\spa\Lim{\Fund_\aa\grupo}^\P$ is an irreducible factor of $\Fund_\aa\sf H.$ Let $\phi:\h\to\frak{gl}(W)$ be the representation of $\h$ defined by $\phi=\fund_\aa(\iota\h)|W$ and $\chi_\phi\in\poids_\h(\phi)$ its highest restricted weight.

As stated in Remark \ref{generador} every element $\chi\in\poids_\ge(\fund_\aa)$ is of the form \begin{equation}\label{pesosiguales0}\chi=\ell_\aa\peso_\aa-\sum_{\sroot\in\simple_\ge}k_\sroot\sroot,\end{equation} where $k_\sroot\in\Z_{\geq0}$ for every $\sroot$. Define then function $\T^f:\poids_\ge(\fund_\aa)\to\poids_\h$  by $$\T^f(\chi)=\chi_\phi-\sum_{\sroot\in\simple_\ge}k_\sroot f(\sroot),$$ if $\chi$ is as in Equation (\ref{pesosiguales0}). For every $\chi\in\poids_\ge(\fund_\aa)$ and $\bb\in\root_\h$ one has $$\<\T^f(\chi),\bb\>=\<\chi_\phi,\bb\>-\sum_{\sroot\in\simple_\ge} k_\sroot\<f(\sroot),\bb\>\in\Z,$$ so $\T^f(\chi)$ is indeed a weight of $\h$, moreover  $\T^f$ is level preserving. Observe also that for every $g\in\grupo$ one has, by Equation (\ref{raicesiguales0}), that $$\T^f(\chi)\big(\jordan_{\sf H}(g)\big)=\chi\big(\jordan_{\sf G}(\bar{\iota} g)\big),$$ so that for every $v\in\Bcone_\grupo$ one has $\T^f(\chi)(v)=\chi(\iota v).$ Thus, for every $v\in\Bcone_\grupo$ and $w\in V^{\T^f(\chi)}$ one has $$\phi(v)w=\chi(\iota v)w=\big(\T^f(\chi)(v)\big)w.$$

Since $\Bcone_\grupo$ has non-empty interior in $\a_{\h,ss}$ (Theorem \ref{Bcono}) and $\poids_\ge(\fund_\aa)$ is finite, there exists an open sub-cone $\scr C\subset\Bcone_\grupo$ such that for every $u\in\scr C$ the eigenvalues $\T^f(\chi)(u)$, for $\chi\in\poids_\ge(\fund_\aa)$, are pairwise distinct. This is to say, the decomposition $$V=\bigoplus_{\chi\in\poids_\ge(\fund_\aa)}V^{\T^f(\chi)}$$ consists, for every $u\in\scr C$, on eigenspaces associated to pairwise distinct eigenvalues of $\phi(u)$. Thus, intersecting with $W$ and since $\scr C$ is open, we obtain that $$W=\bigoplus_{\chi\in\poids_\ge(\phi_\aa)}W\cap V^{\T^f(\chi)}$$ is the weight space decomposition of $\phi$. Thus $\T^f$ has values in $\poids_\h(\phi)$ and is moreover surjective onto this set.

Since $\sf G$ is simple, $\fund_\aa$ is injective and thus, since any weight of $\fund_\aa(\iota\h)$ is contained in $\poids_\h(\phi),$ $\fund_\aa(\iota\h_{ss})$ is simple and thus $\h_{ss}$ is. Consequently, $f$ is surjective and, since $\dim (\h_{ss})_{f(\aa)}=1$ for every $\aa$ (Lemma \ref{raiz1}), $\h_{ss}$ is split.


From surjectivity of $\T^f,$ and since there is only one weight of $\fund_\aa$ of level $2$ (the weight $\ell_\aa\peso_\aa-\aa$, recall Example \ref{48}) one has that for every $\bb\in\simple_\h-\{f(\aa)\}$ the linear form $\chi_\phi-\bb$ is not a weight, hence $\<\chi_\phi,\bb\>=0$ and thus $\chi_\phi=n_\aa\peso_{f(\aa)}$ for some $n_\aa\in\Z_{\geq1}.$

Let us assume from now on that $\aa$ is an extremal root of $\simple_\ge,$ so that the only weights of level 3 of $\fund_\aa$ are $\ell_\aa\peso_\aa-\aa-\bb$ for a unique root $\bb\in\simple_\ge,$ and  $\ell_\aa\peso_\aa-2\aa$ (only if $\ell_\aa\geq2$ or if $2\aa\in\root$). This implies that the only weights of level 3 of $\phi$ are $n_\aa\peso_{f(\aa)}-f(\aa)-f(\bb),$ and possibly $n_\aa\peso_{f(\aa)}-2f(\aa).$ 

Hence $\<n_\aa\peso_{f(\aa)}-f(\aa),\sroot\>=0$ for every $\sroot\in\simple_\h-\{f(\aa),f(\bb)\}$ from which $f(\aa)$ is an extremal root of $\simple_\h.$ Moreover, either \begin{itemize}\item[-]$f(\aa)=f(\bb)$ i.e. for every $\sroot\in\simple_\h-\{f(\aa)\}$ one has $$0=\<n_\aa\peso_{f(\aa)}-f(\aa),\sroot\>=-\<f(\aa),\sroot\>$$ and thus $\h_{ss}$ has rank $1,$\item[-] or $ f(\aa)\neq f(\bb).$ In this case, if one assumes moreover that $\ell_\aa=1$ and $2\aa\notin\root$, then $n_\aa\peso_{f(\aa)}-2f(\aa)\notin\poids_\h(\phi)$ and hence $n_\aa=1.$\end{itemize}This completes the proof. \end{proof}

\subsection{Classification of Zariski closures of totally coherent groups}\label{clasif}

Throughout this section, $\ge$ is a simple split real Lie algebra, $\sf G$ is a real-algebraic Zariski connected Lie group with Lie algebra $\ge$ and $\grupo<\sf G$ is a totally coherent discrete subgroup with reductive Zariski closure $\sf H.$ The purpose is to classify the pairs $(\h_{ss},\phi)$ where $\phi:\h_{ss}\to\ge$ is the representation induced by the inclusion $\sf H\subset\sf G.$ By Proposition \ref{existenMapas} $\h_{ss}$ is simple split.

One begins by the following:

\begin{cor}\label{rg=1}If $\h_{ss}$ has rank $1$ then it is a principal  $\sl_2(\R)$ of $\ge.$
\end{cor}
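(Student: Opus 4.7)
The plan is to exhibit a non-zero $\ee\in\h_{ss,\bb}$ (for $\bb$ the unique simple root of $\h_{ss}$) as a principal nilpotent of $\ge$, and then invoke Kostant's Theorem \ref{nilpoPrinc} combined with the Kostant-Malcev uniqueness of $\sl_2$-triples completing a fixed nilpotent, to conclude that $\h_{ss}$ is $\Int\ge$-conjugate to a principal $\sl_2(\R)$ of $\ge$. As a first step, Proposition \ref{existenMapas} tells us that $\h_{ss}$ is split simple of rank $1$, so $\h_{ss}\cong\sl_2(\R)$; pick an $\sl_2$-triple $(\ee,\upupsilon,\yy)$ with $\ee\in\h_{ss,\bb}\setminus\{0\}$. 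Since $\simple_\h=\{\bb\}$, the surjection $f:\simple_\ge\to\simple_\h$ of that proposition is constant equal to $\bb$, and the argument producing (\ref{biendefi0}) then yields $\sroot|_{\a_{\h,ss}}=\bb$ for every $\sroot\in\simple_\ge$, whence $\aa|_{\a_{\h,ss}}=\het(\aa)\,\bb$ for every $\aa\in\root_\ge$. Expanding $\ee$ in the root-space decomposition of $\ge$, the identity $[v,\ee]=\bb(v)\ee$ for $v\in\a_{\h,ss}$ now forces the $\ge_0$-component of $\ee$ and every $\ge_\aa$-component with $\het(\aa)\geq 2$ to vanish, so $\ee=\sum_{\sroot\in\simple_\ge}a_\sroot\xx_\sroot$ for scalars $a_\sroot\in\R$, with $\xx_\sroot$ as in \S \ref{xalpha}.

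The heart of the argument is to show $a_{\sroot_0}\neq 0$ for every $\sroot_0\in\simple_\ge$, matching the hypothesis of Kostant's Theorem \ref{nilpoPrinc}. Suppose by contradiction that $a_{\sroot_0}=0$ and consider the fundamental representation $\Fund_{\sroot_0}:\sf G\to\GL(V_{\sroot_0})$. By the proof of Proposition \ref{existenMapas}, the space $W_{\sroot_0}=\spa\Lim{\Fund_{\sroot_0}(\grupo)}^\P$ is an $\h_{ss}$-irreducible subspace of $V_{\sroot_0}$ containing the $\ge$-highest-weight line $V_{\sroot_0}^+$, with $\h_{ss}$-highest weight $n_{\sroot_0}\peso_\bb$, $n_{\sroot_0}\geq 1$. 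Pick $u\in V_{\sroot_0}^+\setminus\{0\}$. Since every summand of $\ee$ acts on $u$ by raising $\ge$-weight by a positive root, $\fund_{\sroot_0}(\ee)u=0$, so the $\sl_2$-relation $[\ee,\yy]=\upupsilon$ yields
\[
\fund_{\sroot_0}(\ee)\fund_{\sroot_0}(\yy)u=\fund_{\sroot_0}(\upupsilon)u=n_{\sroot_0}\peso_\bb(\upupsilon)\,u\neq 0,
\]
the last inequality coming from $\upupsilon\in\a_{\h,ss}^+$ and $\peso_\bb$ being dominant. On the other hand, Proposition \ref{cuerda} ensures that $\ell_{\sroot_0}\peso_{\sroot_0}-\sroot_0$ is the unique weight of $\Fund_{\sroot_0}$ of level $2$, so $\fund_{\sroot_0}(\yy)u\in V_{\sroot_0}^{\ell_{\sroot_0}\peso_{\sroot_0}-\sroot_0}$. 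Applying $\fund_{\sroot_0}(\ee)=\sum_\sroot a_\sroot\fund_{\sroot_0}(\xx_\sroot)$ to this vector, each summand with $\sroot\neq\sroot_0$ lands in the weight-space attached to $\ell_{\sroot_0}\peso_{\sroot_0}-\sroot_0+\sroot$, which vanishes because $\sroot_0-\sroot$ has a negative simple-root coefficient; the surviving summand for $\sroot=\sroot_0$ is killed by the assumption $a_{\sroot_0}=0$. This yields $\fund_{\sroot_0}(\ee)\fund_{\sroot_0}(\yy)u=0$, contradicting the previous display.

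Given $a_\sroot\neq 0$ for every $\sroot\in\simple_\ge$, Kostant's Theorem \ref{nilpoPrinc} places $\ee$ inside some principal $\sl_2(\R)\subset\ge$; the Kostant-Malcev theorem, asserting that any two $\sl_2$-triples of $\ge$ with the same nilpotent element are $\Int\ge$-conjugate (by an element fixing $\ee$), then implies that the $\sl_2$-subalgebra $\h_{ss}=\spa\{\ee,\upupsilon,\yy\}$ is itself $\Int\ge$-conjugate to this principal $\sl_2(\R)$, hence is a principal $\sl_2(\R)$ of $\ge$. The delicate point is the weight-counting argument in the middle paragraph, which crucially exploits both the tight weight structure of the fundamental representation $\Fund_{\sroot_0}$ (only one weight of level $2$) and the $\sl_2$-irreducibility of $W_{\sroot_0}$ of dimension at least $2$.
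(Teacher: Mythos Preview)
Your proof is correct and follows the same strategy as the paper: show that a nonzero nilpotent of $\h_{ss}$ has nonzero component along every simple root of $\ge$ by probing with each fundamental representation $\Fund_\sroot$, then invoke Kostant's Theorem~\ref{nilpoPrinc} (the paper works with $\sf f\in\h_{ss}\cap\wk\n$ and argues more tersely that $\fund_\sroot(\sf f)V^+\neq0$ forces $b_\sroot\neq0$, while you make the commutator identity explicit and supply the Kostant--Malcev uniqueness step that the paper leaves implicit). One small expository gap: your claim $\fund_{\sroot_0}(\yy)u\in V_{\sroot_0}^{\ell_{\sroot_0}\peso_{\sroot_0}-\sroot_0}$ requires knowing that $\fund_{\sroot_0}(\yy)u$ sits at level~$2$, which follows by applying to $\yy$ the same eigenvalue argument you gave for $\ee$ (yielding $\yy=\sum_{\sroot\in\simple_\ge}c_\sroot\yy_\sroot$), or alternatively from the weight-restriction identity~(\ref{biendefi0}).
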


\begin{proof} Consider $\aa\in\simple$ and let us compose the inclusion of $\h_{ss}$ with a fundamental representation $\fund_\aa$ of $\ge$. Throughout the proof of Proposition \ref{existenMapas} it is stablished that the highest weight space $V^+$ of $\fund_\aa$ is also the highest weight space of some non-trivial irreducible factor of $\fund_\aa(\iota\h_{ss}),$ of highest weight $n_\aa\peso_{f(\aa)}$, for some $n_\aa\in\Z_{\geq1}$, and a function $f:\simple_\ge\to\simple_{\h}$, necessarily constant in this is case.

There exists then a non-zero $\sf f\in\iota(\h_{ss})\cap\wk{\frak n}$. Consider $w\in V^+=V^{n_\aa\peso_{f(\aa)}}$. Since $V^{n_\aa\peso_{f(\aa)}}$ is the highest weight space of the representation $\phi:\h_{ss}\to\gl(W)$, $$0\neq\phi(\sf f)v\in V^{n_\aa\peso_{f(\aa)}-f(\aa)}\subset V^{\ell_\aa\peso_\aa-\aa}.$$

Additionally, we compute $\phi_\aa(\sf f)v$ upon writing $\sf f=\sum_{\sroot\in\root^+}b_\sroot\yy_\sroot$. To this end, consider the set $\sf R_\aa$ of positive roots with non-vanishing coefficient on $\aa$ (in the basis $\simple$)  and its complement $\sf R_\aa^\complement$ on $\root^+$, \begin{alignat*}{2}\sf R_\aa & =\{\bb\in\root^+: \<\peso_\aa,\bb\>\neq0\}\\ \sf R_\aa^\complement & =\{\bb\in\root^+: \<\peso_\aa,\bb\>=0\}\end{alignat*} By Proposition \ref{cuerda}, if $\bb\in\sf R_\aa^\complement$ one has $\ell_\aa\peso_\aa-\bb\notin\poids(\phi_\aa)$, so $\phi_\aa(\sf y_\bb) v=0$. However, again by Proposition \ref{cuerda}, if $\bb\in\sf R_\aa$ then $\ell_\aa\peso_\aa-\bb\in\poids(\phi_\aa)$, giving $\phi_\aa(\sf y_\bb) v\neq0$.

Thus, \begin{alignat*}{2}V^{\ell_\aa\peso_\aa-\aa}\ni\phi_\aa(\sf f)v & =\sum_{\sroot\in\root^+}b_\sroot\big(\phi_\aa(\yy_\sroot)v\big)\\ & =\sum_{\sroot\in\sf R_\aa}b_\sroot\big(\phi_\aa(\yy_\sroot)v\big)+\sum_{\sroot\in\sf R_\aa^\complement}b_\sroot\big(\phi_\aa(\yy_\sroot)v\big)\\ & = \sum_{\sroot\in\sf R_\aa}b_\sroot\big(\phi_\aa(\yy_\sroot)v\big).\end{alignat*} Since the weight spaces $V^{\ell_\aa\peso_\aa-\bb}$ are in direct sum for distinct $\bb\in\root_\aa$, one concludes $b_\bb=0$ for all $\bb\in\sf R_\aa-\{\aa\}$ and $b_\aa\neq0$. 

The same argument applied to the remaining fundamental representations $\phi_\sroot$, for $\sroot\in\simple$, give that $\sf f=\sum_{\sroot\in\simple}b_\sroot\sroot$ and that $b_\sroot\neq0$ for all $\sroot\in\simple$. Kostant's Theorem \ref{nilpoPrinc} asserts then that $\h_{ss}$ is a principal $\sl_2(\R).$\end{proof}
%
%
%
%
%

If the rank of $\h_{ss}$ is at least $2$ then, since the fundamental representations of $\ge$ verify $\ell_\aa=1$ for all $\aa\in\simple_\ge,$ Proposition \ref{existenMapas} provides a surjective function $f:\simple_\ge\to\simple_\h$ such that the image of an extremal root is an extremal root, and for every $\aa\in\simple_\ge$ a surjective map $\T^f:\hasse\ge{\peso_\aa}\to\hasse\h{\peso_{f(\aa)}}$ between the corresponding Hasse diagrams. Applying the Table (\ref{duquesa}) given by Lemma \ref{diagramasClasif} one concludes at once the following Corollary.

\begin{cor}\label{rg=r} If $\rk \h_{ss}\geq2$ and $\h_{ss}\neq\ge,$ then the only possibilities for $\phi:\h_{ss}\to\ge$ are, up to $\Int\ge$-conjugation, the ones listed in Table \ref{tricota}.
\end{cor}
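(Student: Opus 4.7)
The proof plan is to combine the output of Proposition \ref{existenMapas} with the classification of Lemma \ref{diagramasClasif}.

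First, since $\ge$ is simple and split, the last clause of Proposition \ref{FundTits} allows us to fix $\ell_\aa=1$ for every fundamental representation $\Fund_\aa$ of $\sf G$. Combined with the standing hypothesis $\rk\h_{ss}\geq2$, Proposition \ref{existenMapas} provides a surjective labeling $f:\simple_\ge\to\simple_\h$ carrying extremal roots to extremal roots, and, for each extremal $\aa\in\simple_\ge$, a surjective labeled diagram map
$$\T^f:\hasse\ge{\peso_\aa}\to\hasse\h{\peso_{f(\aa)}}$$
with $n_\aa=1$. These are exactly the hypotheses of Lemma \ref{diagramasClasif}, whose conclusion (table \ref{duquesa}) then enumerates all possible triples $(\simple_\ge,\simple_\h,f)$ subject to these constraints: either $f=\id$, or we are in one of the six families listed there.

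If $f=\id$, then equation (\ref{biendefi0}) from the proof of Proposition \ref{existenMapas} yields $\T^f(\chi)=\chi|_{\a_{\h,ss}}$, so for every extremal $\aa$ the restriction of $\Fund_\aa$ to $\h_{ss}$ has the same set of weights (with matching positions in the Hasse diagram) as $\Fund_\aa$ itself. In particular the weight $\peso_\aa$ remains highest with the same multiplicity, which, by the theorem of the highest weight, forces the inclusion $\phi:\h_{ss}\hookrightarrow\ge$ to be a Lie algebra isomorphism onto its image. Since $\ge$ is simple, $\h_{ss}=\ge$ (up to $\Int\ge$-conjugation).

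In each of the remaining cases, equation (\ref{biendefi0}) already pins down, for every fundamental representation of $\sf G$, the weight content of its restriction to $\h_{ss}$, and hence the isomorphism class of this restricted representation. This in turn determines $\phi$ up to $\Int\ge$-conjugation, because simple split subalgebras of a given simple $\ge$ with prescribed restrictions of the fundamental $\ge$-representations (together with a compatible Cartan subalgebra) are unique up to conjugation. One then reads off the correspondence with table \ref{tricota}: the rows $\A_{2n-1}\to\Ce_n$ and $\A_{2n}\to\B_n$ give the standard inclusions $\sp(2n,\R)\hookrightarrow\sl_{2n}(\R)$ and $\so(n,n+1)\hookrightarrow\sl_{2n+1}(\R)$; the row $\D_n\to\B_{n-1}$ gives the stabilizer of a non-isotropic line in $\R^{n,n}$; the row $\EE_6\to\F_4$ gives the fixed-point subalgebra of the involution $\inv_0$ of Example \ref{Inv}; and the three rows landing in $\Ge_2$ correspond to the fundamental representation for the short root (see Figures (\ref{g2sl}) and (\ref{d4b3})).

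The main obstacle is the final identification step: translating each purely combinatorial row of table \ref{duquesa} into its geometric incarnation in table \ref{tricota}. This is not conceptually difficult, but requires a case-by-case check that the weight content supplied by $\T^f$, together with the dimension of the representation and the invariant forms it preserves, uniquely singles out the stated embedding among split simple Lie subalgebras of the given $\ge$; for the $\D_4$ cases one must additionally distinguish the two non-conjugate $\B_3$'s (as is already remarked after Lemma \ref{diagramasClasif}), which is the role of the spin representation identification cited from Fulton--Harris.
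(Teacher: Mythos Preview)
Your proposal is correct and follows the same approach as the paper: feed the output of Proposition \ref{existenMapas} (using $\ell_\aa=1$ since $\ge$ is split, and $\rk\h_{ss}\geq2$ to get $n_\aa=1$) into Lemma \ref{diagramasClasif}. The paper's own argument is the single sentence preceding the corollary and literally says ``one concludes at once''; you have simply expanded what the paper leaves implicit.

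One minor comment: your handling of the $f=\id$ case is slightly roundabout. Invoking the theorem of the highest weight on the restricted representation is not really the cleanest route. The direct argument is that $f=\id$ forces $\simple_\ge$ and $\simple_\h$ to have the same Dynkin type (this is already part of the output of Lemma \ref{diagramasClasif}), so $\h_{ss}$ and $\ge$ are isomorphic split simple algebras of equal dimension; since $\phi$ is injective ($\ge$ is simple), $\h_{ss}=\ge$. Your identification paragraph for the remaining rows is accurate and matches the discussion the paper places after Lemma \ref{diagramasClasif} (in particular the $\D_4\to\B_3$ disambiguation via the spin representation), though the paper itself does not repeat any of this when stating the corollary.
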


\begin{table}[h!]
  \begin{center}
    \begin{tabular}{r|r|r} 
      $\ge$ & $\h_{ss}$ & $\phi:\h_{ss}\to\ge$  \\
      \hline
     
      \multirow{2}{*}{$\sl_{2n+1}(\R)$} & $\so(n,n+1)$ $\forall n$ & defining representation \\
       & $\ge_2$ if $n=3$ & fundamental for the short root\\\hline
       $\sl_{2n}(\R)$ & $\sp(2n,\R)$ & defining representation \\\hline
      $\so(3,4)$ & $\ge_2$ & fundamental for the short root \\\hline
      \multirow{4}{*}{$\so(n,n)$} & $\so(n-1,n)$ $\forall n\geq3$ & stabilizer of a non-isotropic line \\
       & $\so(3,4)$ if $n=4$ & fundamental for the short root \\\cline{3-3}
       & \multirow{2}{*}{$\ge_2$ if $n=4$} & stabilizes a non-isotropic line $L$ and is  \\ & & fundamental for the short root on $L^\perp$\\\hline 
      $\e_6$ & $\f_4$ & $\Fix(\inv_0)$ (Example \ref{Inv})
    \end{tabular}
    \caption{Statement of Corollary \ref{rg=r}} \label{tricota}
  \end{center}
\end{table}

\section{Total positivity}\label{total>0}

Throughout this section $\sf G$ denotes the real points of a Zariski connected real-algebraic simple split group.

\subsection{Lusztig's total positivity}\label{>0}Let us fix, for each simple root $\sroot\in\simple,$ algebraic group isomorphisms $\x_\sroot:\R\to\exp\ge_\sroot,$ $y_\sroot:\R\to\exp\wk{\ge}_\sroot$ and $\hh_\sroot:\R\to\exp (\R\cdot h_\sroot)$ so that $$(\begin{smallmatrix}1 & t \\ 0 & 1\end{smallmatrix})\mapsto \x_\sroot(t),\\ \, (\begin{smallmatrix}1 & 0 \\ t & 1\end{smallmatrix})\mapsto \y_\sroot(t),\\ \, (\begin{smallmatrix}t & 0 \\ 0 & t^{-1}\end{smallmatrix})\mapsto \hh_\sroot(t),$$ defines a morphism $\SL_2(\R)\to\sf G$ (recall \S\,\ref{xalpha} on the existence of such morphisms). The collection $\pin=\big\{x_\sroot:\sroot\in\simple\}$ is called a \emph{pinning} of $\sf G$ and two pinnings are conjugated by $\sf G.$


Let $\wk\n=\bigoplus_{\aa\in\root^+}\wk{\ge}_\aa$. Denote by $U=\exp\n$ and by $\wk{U}=\exp\wk{\n}$ the unipotent radicals of $\Borel$ and $\wk{\Borel}$ respectively. Let $A=\exp\a$ and let $\sf M$ be the centralizer in $\sf K$ of $\exp\a,$ one has \begin{equation}\label{uni}B=\sf M A U.\end{equation}



Let $w_0\in\Weyl$ be the longest element and consider a reduced expression $w_0=r_N\cdots r_1$ as a product of reflections associated to simple roots. Let us denote, for each $r_i$ the associated simple root by $\sigma_{r_i}\in\simple.$ The number $N$ equals $|\root^+|,$ but we will not require this fact.

Consider the maps $\Psi^\pin:(\R_{>0})^N\to U$ and $\wk{\Psi}^\pin:(\R_{>0})^N\to \wk{U}$ defined by \begin{alignat}{2}\Psi^\pin(a_1,\cdots,a_N) & =\x_{\sroot_{r_N}}(a_N)\cdots\x_{\sroot_{r_1}}(a_1),\nonumber \\ \wk{\Psi}^\pin(a_1,\cdots,a_N) & =\y_{\sroot_{r_N}}(a_N)\cdots\y_{\sroot_{r_1}}(a_1).\end{alignat}

We summarize several results from {Lusztig \cite[\S2]{Lusztig-TP}} in the following theorem.

\begin{thm}[{Luzstig \cite[\S2]{Lusztig-TP}}]The images $U_{>0}=\Psi^\pin\big((\R_{>0})^N\big)$ and $\wk U_{>0}=\wk{\Psi}^\pin\big((\R_{>0})^N\big)$ are semi-groups independent of the chosen reduced expression of $w_0.$ The product $$\sf G_{>0}=\wk{U}_{>0}A U_{>0}= U_{>0}A\wk{U}_{>0}$$ is also a semi-group and every element $g\in \sf G_{>0}$ has a unique expression of the form $g=\wk u t v$ with  $\wk u\in\wk{U}_{>0},$ $t\in A$ and $v\in U_{>0}.$
\end{thm}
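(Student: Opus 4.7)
The plan is to follow Lusztig's strategy, which rests on three positivity-preserving rewriting rules between the generators $\x_\sroot,\y_\sroot,\hh_\sroot$. \emph{Rule 1 (rank-two positive commutations):} for simple roots $\aa,\bb$ and $a,b>0$, a product $\x_\aa(a)\x_\bb(b)$ can be rewritten in reverse simple-root order with parameters that are positive rational functions of $a,b$; these are the Chevalley relations specialized to the rank-two subsystem spanned by $\aa,\bb$ (necessarily of type $\A_1\times\A_1$, $\A_2$, $\B_2$ or $\Ge_2$), and the explicit formulas preserve positivity in every case. The mirror identity holds for products of $\y_\sroot$'s. \emph{Rule 2 ($\SL_2$ cross-commutation):} the pinning morphism $\SL_2(\R)\to\sf G$ yields
\[
\x_\sroot(a)\,\y_\sroot(b)=\y_\sroot\!\bigl(\tfrac{b}{1+ab}\bigr)\,\hh_\sroot(1+ab)\,\x_\sroot\!\bigl(\tfrac{a}{1+ab}\bigr)\qquad (a,b>0),
\]
while $\x_\aa(a)$ and $\y_\bb(b)$ commute whenever $\aa\neq\bb$ in $\simple$, since $\aa-\bb$ is not a root. \emph{Rule 3 ($A$-dilation):} for $t>0$, conjugation by $\hh_\sroot(t)$ sends $\x_\bb(a)\mapsto\x_\bb(t^{\<\bb,\sroot\>}a)$ and similarly for $\y_\bb$, so sliding $A$-factors past positive unipotent factors preserves positivity.

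With these identities I would first deduce the independence of $U_{>0}$ and $\wk U_{>0}$ on the chosen reduced expression of $w_0$: by Matsumoto's theorem any two reduced expressions differ by a sequence of braid moves, and each braid move is absorbed by Rule 1 with positive new coefficients. The same ingredients yield the semi-group property of $U_{>0}$: a concatenation $\Psi^\pin(a)\Psi^\pin(b)$ produces a word of length $2N$ in the simple generators, and by combining $\x_\sroot(a)\x_\sroot(b)=\x_\sroot(a+b)$ to collapse repetitions with Rule 1 to perform braid moves, one reduces it to the length-$N$ word associated to any fixed reduced expression of $w_0$, with positive coefficients throughout; the same argument, applied to $\y$-words, handles $\wk U_{>0}$. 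For the semi-group property of $\sf G_{>0}$, the key step is the inclusion $U_{>0}\wk U_{>0}\subset \wk U_{>0}A U_{>0}$: using Rule 2 one successively moves each $\y$-letter past the $\x$-letters to its left, creating central $\hh_\sroot$-factors of positive argument at equal-root crossings, which Rule 3 then slides into a single central $A$-factor, and Rule 1 puts the two unipotent halves back into $\Psi^\pin$-form. Together with Rule 3 and the already-proven semi-group properties of $U_{>0}$ and $\wk U_{>0}$, this inclusion (and its mirror) yields both the semi-group property of $\sf G_{>0}$ and the equality $\wk U_{>0}A U_{>0}=U_{>0}A\wk U_{>0}$.

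Uniqueness of the decomposition $g=\wk u\,t\,v$ inside $\wk U_{>0}A U_{>0}$ is then essentially free: the open Bruhat cell $\wk \Borel\Borel=\wk U\cdot A\cdot U$ admits a unique Gauss decomposition, with $\wk u,t,v$ individually rational functions of $g\in\wk U A U$; the positivity conditions simply select those $g$ that belong to $\sf G_{>0}$. The main obstacle in this program is the explicit verification of Rule 1 in the $\Ge_2$ case, a finite but lengthy calculation involving all six positive root subgroups; once the positive rank-two identities are in hand, the rest of the argument is bookkeeping in the free monoid of simple generators together with the classical Bruhat decomposition on the open cell.
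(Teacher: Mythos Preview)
The paper does not give a proof of this theorem: it is stated as a summary of results from Lusztig \cite[\S2]{Lusztig-TP} and then used as a black box. There is therefore no ``paper's own proof'' to compare against.

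That said, your outline is a faithful sketch of Lusztig's original argument. The three rewriting rules you isolate are exactly the positivity-preserving identities on which Lusztig's \S2 is built, the appeal to Matsumoto's theorem for independence of the reduced expression is the standard mechanism, and the derivation of the semi-group properties from the inclusion $U_{>0}\wk U_{>0}\subset\wk U_{>0}A\,U_{>0}$ plus $A$-conjugation is correct. Uniqueness via the Gauss decomposition on the open Bruhat cell is likewise the right observation. One minor point: in Rule~3 the exponent should be $\bb(h_\sroot)=\<\bb,\sroot\>$, which can be negative, but since $t>0$ this is harmless for positivity --- you implicitly use this but it is worth making explicit. The reduction of a length-$2N$ positive word to a length-$N$ one requires a bit more care than ``collapse repetitions and apply braid moves'' (one needs an induction on the length of the subword of the longest element being represented), but the ingredients you list are the right ones.
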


Even though we omit the pinning notation on the semi-groups $U_{>0},$ $\wk{U}_{>0}$ and $\sf G_{>0},$ they do depend on the pinning $\pin.$ For example, fixing the pinning on $\SL_n(\R)$ $$\big(\begin{smallmatrix} 1 & t \\ 0 & 1\end{smallmatrix}\big)\mapsto x_i(t)=\id +te_{i,i+1},\quad i\in\lb1,d-1\rb,$$ where $e_{i,j}$ is the $n\times n$ matrix consisting of vanishing entries except at $(i,j)$, whose entry equals $1$, gives the semi-group $\SL_n(\R)_{>0}$ of totally positive matrices mentioned in the Introduction. However one may consider other pinnings. We list below four possibilities for the unipotent semi-groups $U_{>0}$ in $\SL_3(\R)$ corresponding to different pinnings: \begin{alignat*}{2} U^{\pin_1}_{>0} & =\Big\{\left(\begin{smallmatrix} 1 & x+z & xy \\ & 1 & y \\ & &1\end{smallmatrix}\right):x,y,z\in\R_{>0}\Big\},\\ U^{\pin_2}_{>0} & =\Big\{\left(\begin{smallmatrix} 1 & -(x+z) & -xy \\ & 1 & y \\ & &1\end{smallmatrix}\right):x,y,z\in\R_{>0}\Big\},\\ U^{\pin_3}_{>0} & =\Big\{\left(\begin{smallmatrix} 1 & x+z & -xy \\ & 1 & -y \\ & &1\end{smallmatrix}\right):x,y,z\in\R_{>0}\Big\},\\ U^{\pin_4}_{>0} & =\Big\{\left(\begin{smallmatrix} 1 & -(x+z) & xy \\ & 1 & -y \\ & &1\end{smallmatrix}\right):x,y,z\in\R_{>0}\Big\}.\end{alignat*}

\subsection{Positivity of flags}

The positive semi-group $\sf G_{>0}$ determines a special subset $\posF\subset\cal F$ defined by $$\posF=\sf G_{>0}\cdot[\Borel]=\wk{U}_{>0}\cdot[\Borel]=U_{>0}\cdot[\wk{\Borel}].$$ 

Let us say that an ordered triple $(x_1,x_2,x_3)\in\cal F^3$ is \emph{in general position} if for all $1\leq i<j\leq 3$ one has $(x_i,x_j)\in\posgen.$ Then one has the following.

\begin{prop}[{Lusztig \cite[Prop. 8.14]{Lusztig-TP}}]\label{abierto} The subset $\posF$ is a connected component of $$\Big\{x\in\cal F:\big([\Borel],x,[\wk\Borel]\big)\textrm{ is in general position}\Big\}.$$ In particular it is an open subset of $\cal F.$
\end{prop}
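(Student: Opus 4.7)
The claim is that $\posF$ equals a single connected component of the set $E := \{x \in \cal F : ([\Borel], x, [\wk\Borel]) \textrm{ is generic}\}$; the parenthetical openness of $\posF$ in $\cal F$ is then automatic, since $E$ is open (as the intersection of two open transversality conditions) inside the locally connected manifold $\cal F$. My plan is to verify in order (i) $\posF \subset E$, (ii) $\posF$ is connected, and (iii) $\posF$ is both open and closed in $E$.

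For (i), I would use the dual descriptions $\posF = \wk U_{>0} \cdot [\Borel] = U_{>0} \cdot [\wk\Borel]$ given in the preceding theorem. Writing $x = \wk u \cdot [\Borel]$ with $\wk u \in \wk U \subset \wk\Borel$, the element $\wk u$ fixes $[\wk\Borel]$, so $(x, [\wk\Borel]) = \wk u \cdot ([\Borel], [\wk\Borel]) \in \posgen$; the symmetric argument using $U \subset \Borel$ gives transversality with $[\Borel]$. For (ii), $\posF$ is the continuous image under $(a_1, \dots, a_N) \mapsto \wk\Psi^\pin(a_1, \dots, a_N) \cdot [\Borel]$ of the connected set $(\R_{>0})^N$, and hence is connected.

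The openness half of (iii) follows from Lusztig's parametrization: $\wk\Psi^\pin$ extends to a birational isomorphism $\R^N \to \wk U$ (with $N = |\root^+| = \dim \wk U$) whose restriction to $(\R_{>0})^N$ is an injective immersion with invertible differential at every point, hence a diffeomorphism onto an open submanifold $\wk U_{>0}$ of $\wk U$. Composing with the open immersion $\wk U \to \cal F$, $\wk u \mapsto \wk u \cdot [\Borel]$, whose image is the big cell of flags transverse to $[\wk\Borel]$, yields that $\posF$ is open in $\cal F$.

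The closedness of $\posF$ in $E$ is the main obstacle. Given $x_n = \wk u_n \cdot [\Borel] \in \posF$ converging to some $x_\infty \in E$, transversality of $x_\infty$ with $[\wk\Borel]$ provides a unique $\wk u_\infty \in \wk U$ with $x_\infty = \wk u_\infty \cdot [\Borel]$ and $\wk u_n \to \wk u_\infty$. Since $\wk U_{>0}$ is an open subsemigroup whose closure $\wk U_{\geq 0} := \overline{\wk U_{>0}}$ in $\wk U$ is homeomorphically parametrized by $(\R_{\geq 0})^N$ via $\wk\Psi^\pin$ (a further structural result of Lusztig), one can write $\wk u_\infty = \wk\Psi^\pin(a_1^{(\infty)}, \dots, a_N^{(\infty)})$ with each $a_i^{(\infty)} \in [0, \infty)$; the remaining task is to exclude the possibility that some $a_i^{(\infty)} = 0$. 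The key ingredient is Lusztig's top generalized minor $\Delta_{w_0}$: it is a regular function on $\wk U$ which vanishes on $\wk u$ exactly when $\wk u \cdot [\Borel]$ fails to be transverse to $[\Borel]$, and in the coordinates $(a_i)$ the pullback $\Delta_{w_0} \circ \wk\Psi^\pin$ is (up to a positive constant) a monomial $\prod_i a_i^{m_i}$ with strictly positive exponents. The extra transversality of $x_\infty$ with $[\Borel]$ therefore forces $\Delta_{w_0}(\wk u_\infty) > 0$, hence each $a_i^{(\infty)} > 0$, giving $\wk u_\infty \in \wk U_{>0}$ and $x_\infty \in \posF$. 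The delicate point, and the main obstacle in the plan, is precisely this identification of $\Delta_{w_0} \circ \wk\Psi^\pin$ as a positive monomial; it rests on Lusztig's explicit factorization calculus, and all other steps are essentially formal once it is in hand.
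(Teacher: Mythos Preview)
The paper does not prove this proposition at all: it is quoted verbatim from Lusztig \cite[Prop.~8.14]{Lusztig-TP} and used as a black box, so there is no in-paper argument to compare your proposal against. Your sketch is therefore not a reconstruction of anything in the present paper, but rather an outline of (part of) Lusztig's original proof.

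As an outline of that proof, steps (i)--(ii) and the openness half of (iii) are fine. For the closedness half, your strategy---pass to the closure $\wk U_{\geq 0}$, use the $(\R_{\geq 0})^N$ parametrization, and then rule out any $a_i^{(\infty)}=0$ via a positivity/minor argument---is indeed the shape of Lusztig's argument. The one point to be careful about is your assertion that a single ``top generalized minor'' $\Delta_{w_0}$ pulls back to a \emph{monomial} $\prod_i a_i^{m_i}$ with all $m_i>0$; in Lusztig's setup the relevant positivity input is that a certain family of generalized minors are subtraction-free polynomials in the $a_i$ (not a single monomial), and one uses the full collection to pin down that every coordinate is strictly positive. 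So the plan is sound, but the specific ``monomial'' claim should be replaced by the correct statement from Lusztig's factorization calculus (or from Fomin--Zelevinsky); as you yourself flag, this is exactly the delicate point, and it does not reduce to the simple monomial identity you wrote.
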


One then defines positivity on triples of flags as being $\sf G$-equivariant, consequently the notion will not depend on the pinning:

\begin{defi} A triple of flags in general position $(x,y,z)$ is \emph{positive} if there exists $g\in \sf G$ such that $g x=[\Borel], gz=[\wk\Borel]$ and $gy\in\posF.$
\end{defi}

\subsection{Simply laced $\sf G$}\label{laced}

Recall that $\ge$ is \emph{simply laced} if for every pair $\sroot,\aa\in\simple$ one has $\<\sroot,\aa\>=\<\aa,\sroot\>$ (recall the definition of $\<\,,\>$ in Equation \eqref{prodsimply}). Equivalently, the Dynkin diagram of $\ge$ does not contain a double or triple arrow. Moreover, $\sf G$ is called \emph{algebraically simply connected} if every finite covering from a real algebraic group onto $\sf G$ is trivial, equivalently the group $\sf G_\C$ of $\C$-points of $\sf G$ is simply connected in the topological sense.

\begin{prop}[{Lusztig \cite[\S 3.1 and Prop. 3.2]{Lusztig-TP}}]\label{>0pesos} Assume that $\sf G$ is simply laced and algebraically simply connected. Let $\Fund:\sf G\to\GL(V)$ be an irreducible real representation, then there exists a basis $\base_{\Fund}$ of $V$ such that \begin{itemize}\item[-] each element of $\base_{\Fund}$ is contained in a restricted weight space of $\fund,$ \item[-] for every $g\in \sf G_{>0},$ the map $\Fund(g):V\to V$ has positive entries on the basis $\base_{\Fund}.$\end{itemize}\end{prop}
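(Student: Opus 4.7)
The plan is to exhibit $\base_{\Fund}$ as a real form of Lusztig's canonical basis of $V\otimes\C$. Under the simply laced and simply connected hypotheses, canonical basis theory (Lusztig, Kashiwara) attaches to the $\ge_\C$-module $V\otimes\C$ a distinguished basis with two features crucial here: each basis vector lies in a single restricted weight space, and the divided powers $\xx_\sroot^{(k)}=\xx_\sroot^k/k!$ and $\yy_\sroot^{(k)}=\yy_\sroot^k/k!$ of the Chevalley generators act on it as non-negative integer linear combinations of basis vectors. Since these structure constants are integers, one may take $\base_{\Fund}$ to be a basis of the real form $V$, satisfying the first condition of the proposition automatically.

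For the positivity statement, the decisive observation is that for $t>0$ the exponentials
$$\Fund(\x_\sroot(t))=\sum_{k\geq 0}t^k\,\xx_\sroot^{(k)},\qquad \Fund(\y_\sroot(t))=\sum_{k\geq 0}t^k\,\yy_\sroot^{(k)}$$
have matrix entries in $\base_{\Fund}$ that are polynomials in $t$ with non-negative integer coefficients. Similarly, each $a\in A=\exp\a$ acts diagonally on the restricted weight space of weight $\chi$ by the positive scalar $e^{\chi(\log a)}$. Combined with the decomposition $\sf G_{>0}=\wk U_{>0} A U_{>0}$ and the parametrisations of $U_{>0},\wk U_{>0}$ as the images of $\Psi^\pin$ and $\wk{\Psi}^\pin$, this yields at once that $\Fund(g)$ has non-negative coefficients in $\base_{\Fund}$ for every $g\in\sf G_{>0}$.

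The remaining obstacle, and where the genuine work lies, is upgrading non-negativity to strict positivity of every matrix entry. I would argue via the crystal graph attached to $\base_{\Fund}$: since $\Fund$ is irreducible this graph is connected, so any two basis vectors $b,b'\in\base_{\Fund}$ are joined by a sequence of Chevalley operators alternating between $\xx_\sroot$'s and $\yy_\sroot$'s. Writing $g\in\sf G_{>0}$ as $\wk\Psi^\pin(\vec s)\cdot a\cdot\Psi^\pin(\vec t)$ along a reduced expression of $w_0$, which visits every simple root on each side, the expansion of $\Fund(g)b'$ along $b$ picks up a positive monomial contribution in the $s_i$'s, $t_i$'s and the eigenvalues of $a$ for each such connecting path, and by non-negativity no two contributions can cancel. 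The technical heart of the argument, and the part I expect to consume the most effort, is the combinatorial matching between reduced expressions of $w_0$ and the connectivity of the crystal graph required to guarantee at least one surviving contribution for every pair $(b,b')$.
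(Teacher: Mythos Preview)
The paper does not supply a proof of this proposition: it is quoted verbatim from Lusztig \cite[\S 3.1 and Prop.~3.2]{Lusztig-TP} and used as a black box in the subsequent arguments (Corollary~\ref{grupos->coherentes}, Lemmas~\ref{lox} and~\ref{parab}). There is therefore no in-paper proof to compare against.

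That said, your sketch is essentially Lusztig's own argument. The canonical basis of the irreducible module is precisely the basis he constructs in \cite[\S 3.1]{Lusztig-TP}, and the non-negativity of the structure constants for the divided powers of Chevalley generators is the key input (this is where simply-laced is used). Your deduction of non-negativity for all of $\sf G_{>0}$ from the factorisation $\wk U_{>0}AU_{>0}$ is exactly right. For the upgrade to strict positivity, Lusztig's route (his Proposition~3.2) is slightly different from the crystal-graph connectivity heuristic you outline: rather than tracing paths between arbitrary pairs of basis vectors, he shows that the matrix of $\Fund(g)$ for $g\in\sf G_{>0}$ has all entries strictly positive by an inductive argument on the length of reduced words, using that the semigroup $\sf G_{>0}$ is stable under multiplication by each $\x_\sroot(t)$ and $\y_\sroot(t)$ with $t>0$. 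Your crystal-graph idea would also work, but the bookkeeping you flag as ``the technical heart'' is genuinely delicate and is not how Lusztig organises it; if you want a self-contained account you are better served following his induction directly.
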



\subsection{Theorem \ref{tA} for simply laced $\sf G$}

We devote this section to the proof of Theorem \ref{tA} when $\sf G$ is simply laced and algebraically simply connected. We prove that a discrete subgroup verifying the hypothesis of Theorem \ref{tA} is totally coherent.

\begin{cor}\label{grupos->coherentes}Let $\sf G$ be simply laced and algebraically simply connected, and $\grupo$ a subgroup with minimal limit sets and such that $\Lim{\grupo,\simple}$ contains a positive loxodromic triple. Then $\grupo$ is totally coherent.\end{cor}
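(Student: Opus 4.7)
The plan is to verify, for each $\sroot\in\simple$, both conditions of Definition \ref{coherente} for $\Fund_\sroot(\grupo)$. The argument has three steps: transfer the minimality hypothesis from the partial flag limit sets to the projective limit sets via the Tits embedding; produce an element of $\grupo$ whose $\Fund_\sroot$-image is, up to conjugation, a Lusztig-positive matrix; and conclude with Perron--Frobenius.

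For the first step, since $\ge$ is split, Proposition \ref{FundTits} gives $\Fund_\sroot$ proximal with highest restricted weight $\peso_\sroot$, so the equivariant algebraic map $\mapa_{\Fund_\sroot}\colon\cal F_{\{\sroot\}}\hookrightarrow\P(V_\sroot)$ is an embedding. It matches proximal elements of $\grupo$ (acting on $\cal F_{\{\sroot\}}$) with proximal elements of $\Fund_\sroot(\grupo)$, giving $\Lim{\Fund_\sroot(\grupo)}^\P=\mapa_{\Fund_\sroot}(\Lim{\grupo,\{\sroot\}})$, whence minimality transfers. The highest weight of $\wedge^2\Fund_\sroot$ is $2\peso_\sroot-\sroot$, so an analogous embedding identifies $\Lim{\wedge^2\Fund_\sroot(\grupo)}^\P$ with the image of $\Lim{\grupo,\theta}$ for some $\theta\supseteq\{\sroot\}$; minimality follows by projection to $\cal F_{\{\sroot\}}$ and the hypothesis.

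For the second step, after conjugating $\grupo$ inside $\sf G$ we may assume $(g_{0+},g_{0-})=([\wk\Borel],[\Borel])$ and $x_0\in\posF$. Since attracting fixed points of proximal elements are, by definition, dense in $\Lim{\grupo,\simple}$ and $\posF$ is open (Proposition \ref{abierto}), there exists $h\in\grupo$ proximal on $\cal F$ with $h_+\in\posF$; write $h_+=\wk u\cdot[\Borel]$ with $\wk u\in\wk U_{>0}$. I claim that for $N$ large enough the element $g_0^N h$ (or a further conjugate by $\wk u$) lies in Lusztig's positive semigroup $\sf G_{>0}=\wk U_{>0}AU_{>0}$: the regular Jordan projection $\lambda_\ge(g_0)\in\inte\a^+$ dominates the $A$-component of the product, the positivity $\wk u\in\wk U_{>0}$ supplies the $\wk U_{>0}$-factor, and transversality of $h_-$ (together with $g_0$ pushing everything away from $g_{0-}=[\Borel]$) supplies the $U_{>0}$-factor.

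For the third step, Proposition \ref{>0pesos} represents $\Fund_\sroot(\sf G_{>0})$ by matrices with strictly positive entries in $\base_{\Fund_\sroot}$; their $2\times 2$ minors are then positive, so $\wedge^2\Fund_\sroot(g)$ is positive in $\wedge^2\base_{\Fund_\sroot}$. Perron--Frobenius furnishes proximality of both matrices with simple top eigenvalues. The attracting line of $\Fund_\sroot(g)$ is $\mapa_{\Fund_\sroot}(g_+)\in\Lim{\Fund_\sroot(\grupo)}^\P$, and the $\lambda_2$-eigenline is the $\mapa$-image of the appropriate partial flag in $\Lim{\grupo,\theta}$, hence lies in $\spa\Lim{\Fund_\sroot(\grupo)}^\P$, completing coherence. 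The crux is the second step: extracting a group element whose $\Fund_\sroot$-image is conjugate into $\sf G_{>0}$. One begins only with $g_0\in A\cdot M$ (the conjugated Cartan part) and the single positive datum $h_+\in\posF$, and must combine them into an element with all three factors positive in the Bruhat decomposition $\wk U_{>0}AU_{>0}$; this relies both on the regularity of $\lambda_\ge(g_0)$ and on a careful analysis of the Bruhat normal form of the product under high powers of $g_0$.
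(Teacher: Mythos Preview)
Your third step contains the real gap. The claim that ``the $\lambda_2$-eigenline is the $\mapa$-image of the appropriate partial flag in $\Lim{\grupo,\theta}$'' is not correct: there is no $\sf G$-equivariant map from any partial flag variety into $\P(V_\sroot)$ whose value at an attracting flag is the \emph{second} eigenline. The map $\mapa_{\wedge^2\Fund_\sroot}$ lands in $\P(\wedge^2 V_\sroot)$ and records the attracting \emph{plane}; it says nothing about why the individual line inside that plane should lie in $\spa\Lim{\Fund_\sroot(\grupo)}^\P$. Proximality of $\Fund_\sroot(g)$ and $\wedge^2\Fund_\sroot(g)$ is not the issue---any purely loxodromic $g\in\grupo$ already gives that. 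The entire content of coherence is the location of the second eigenline, and this is what your argument does not touch.

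Your second step is therefore unnecessary, and also unjustified as written: every element of $\sf G_{>0}=\wk U_{>0}AU_{>0}$ has trivial $\sf M$-component in its Jordan decomposition, while $g_0$ (and hence $g_0^Nh$) typically does not, and conjugation inside $\sf G$ does not fix this. The paper bypasses this entirely. It uses $g_0$ itself as the witnessing element: after conjugating so that $(g_{0+},g_{0-})=([\Borel],[\wk\Borel])$, $\Fund_\sroot(g_0)$ is block-diagonal in the weight decomposition and its $\lambda_2$-eigenline is precisely the weight space $V_{\peso_\sroot-\sroot}$. The positive triple gives $x_0$ with $\mapa(x_0)=\Fund_\sroot(g)V^+$ for some $g\in\sf G_{>0}$, so by Proposition~\ref{>0pesos} this line has strictly positive coordinates in $\base_{\Fund_\sroot}$. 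One then forms the line
\[
\ell_{x_0}=\big(V^+\oplus\mapa(x_0)\big)\cap V^-,
\]
which lies in the $\grupo$-invariant subspace $\spa\Lim{\Fund_\sroot(\grupo)}^\P$ (as a combination of two limit lines) and has nonzero component in every weight space of $V^-$. Iterating $g_0$ pushes $\ell_{x_0}$ to the top eigenspace of $\Fund_\sroot(g_0)|V^-$, namely $V_{\peso_\sroot-\sroot}$, which therefore lies in the closed invariant span. This dynamical argument on $\P(V_\sroot)$, not Perron--Frobenius applied to a group element, is the mechanism that places the second eigenline where it needs to be.
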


\begin{proof}For $\sroot\in\simple$ consider the fundamental representation $\Fund_\sroot:\sf G\to\GL(V)$ and the $\Fund_\sroot$-equivariant map $\mapa:\cal F_{\{\sroot\}}\to\P(V)$ from \S\,\ref{map}. Let also $\wedge^2:\GL(V)\to\GL(\wedge^2V)$ denote the second exterior power representation. 

By minimality one has that $$\mapa\big(\Lim{\grupo,\{\sroot\}}\big)=\Lim{\Fund_\sroot(\grupo)}^\P.$$ Moreover, since the only second level weight of $\Fund_\sroot$ is $\peso_\sroot-\sroot$ (recall Example \ref{48}), the representation $\wedge^2\Fund_\sroot$ of $\sf G$ is proximal, though it may be reducible. Denote by $\Wedge:\sf G\to\GL(V')$ the $\sf G$-irreducible factor containing the highest weight of $\wedge^2\Fund_\sroot$. It contains the attracting points of $\wedge^2g$ for every $g\in\sf G$ proximal on $\cal F.$ Let $\t\subset\simple$ be the type of the stabilizer  in $\sf G$ (recall Definition \ref{type}) of $V^{\peso_\sroot}\wedge V^{\peso_\sroot-\sroot}$. The limit set $\Lim{\grupo,\t}$ is also minimal and one has $\mapa_\Wedge\big(\Lim{\grupo,\t}\big)=\Lim{\wedge^2\Fund_\sroot(\grupo)}^\P$ so the latter is thus minimal.

Finally, consider $g_0\in\grupo$ proximal on $\cal F$ and $x_0\in\Lim{\grupo,\simple}$ so that $({g_0}_+,x_0,{g_0}_-)$ is a positive triple. We can assume that ${g_0}_+=[\Borel]$ and ${g_0}_-=[\wk{\Borel}]$ so that $\mapa({g_0}_+)=V^+$ and $\wk{\mapa}({g_0}_-)=V^-$ (recall notation from \S\,\ref{map}). We want to show that $V^{\peso_\sroot-\sroot}$ belongs to $\spa\mapa(\Lim{\grupo,\{\sroot\}}).$

Let $g\in\sf G_{>0}$ be such that $$\mapa(x_0)=\mapa\big(g\cdot[\Borel]\big)=\Fund_\sroot(g)\cdot\mapa({g_0}_+).$$ Consider then the $2$-dimensional subspace $P_{x_0}=\mapa({g_0}_+)\oplus\Fund_\sroot(g)\mapa({g_0}_+)$ and let $\ell_{x_0}\in\P(V)$ be the intersection $$\ell_{x_0}=P_{x_0}\cap V^-.$$ Since $\sf G$ is simply laced, Lusztig's Proposition \ref{>0pesos} applies to give that $\Fund_\sroot(g)$ has positive coefficients in $\base_{\Fund_\sroot}.$ In particular, if $v\in V^+-\{0\}$ the vector $\Fund_\sroot(g)v$ has positive coefficients in $\base_{\Fund_\sroot}.$ The line $\ell_{x_0}$ is thus not contained in any subspace spanned by a partial sum of weights in $\poids(\fund_\sroot)-\{\peso_\sroot\},$ i.e. $\ell_{x_0}$ is not contained in any $\Fund_\sroot(g_0)$-invariant subspace of $V^-.$ Consequently, the sequence $\Fund_\sroot({g_0}^n)\cdot\ell_{x_0}$ approaches, as $n\to+\infty,$ the $\Fund_\sroot({g_0})$-invariant subspace of $V^-$ associated to the top eigenvalue of $\Fund({g_0})|V^-,$ which is $V^{\peso_\sroot-\sroot}.$ This completes the proof.\end{proof}

Corollary \ref{grupos->coherentes} together with subsection \S\,\ref{clasif} give thus the following.

\begin{cor}\label{simplyL}Let $\sf G$ be simply laced and algebraically simply connected, and let $\grupo<\sf G$ have reductive Zariski closure $\sf H$, minimal limit sets and such that $\Lim{\grupo,\simple}$ contains a positive loxodromic triple. Then $\h_{ss}$  is either $\ge,$ a principal $\sl_2(\R)$ or $\Int\ge$-conjugated to the possibilities listed in Table \ref{cuartirolo}.\end{cor}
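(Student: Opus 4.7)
The argument is essentially an assembly of results already established. The plan is to translate the positivity hypothesis into the algebraic ``totally coherent'' condition of Section \ref{discretegroups}, and then feed it into the classification of Section \ref{clasif}, restricted to the simply laced setting.

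First I would invoke Corollary \ref{grupos->coherentes}: under the assumptions that $\sf G$ is simply laced and simply connected, that $\grupo$ has minimal limit sets, and that $\Lim{\grupo,\simple}$ contains a positive loxodromic triple, every fundamental representation $\Fund_\sroot(\grupo)$ is coherent in the sense of Definition \ref{coherente}. Thus $\grupo$ is totally coherent, and all the results of \S\ref{clasif} apply to $\grupo<\sf G$ with reductive Zariski closure $\sf H$. In particular, Proposition \ref{existenMapas} gives that $\h_{ss}$ is simple split.

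Next I would split on the rank of $\h_{ss}$. If $\rk\h_{ss}=1$, then Corollary \ref{rg=1} directly identifies $\h_{ss}$ with a principal $\sl_2(\R)$ of $\ge$. If $\rk\h_{ss}\geq 2$, then Corollary \ref{rg=r} asserts that, up to $\Int\ge$-conjugation, either $\h_{ss}=\ge$ or $(\ge,\h_{ss},\phi)$ is one of the entries listed in table \ref{tricota}. The only remaining point is to discard those entries of table \ref{tricota} whose first column is not simply laced. Since in the present corollary $\ge$ is assumed simply laced, the row with $\ge=\so(3,4)$ (type $\B_3$) is excluded, while the rows with $\ge=\sl_n(\R)$ (type $\A_{n-1}$), $\ge=\so(n,n)$ (type $\D_n$) and $\ge=\e_6$ (type $\EE_6$) survive; these are exactly the entries of table \ref{cuartirolo}. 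No new calculation is required.

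There is really no hard step in the proof of this corollary by itself: everything substantive has been done earlier, namely the reduction to totally coherent groups via Lusztig's positivity on a single triple (Corollary \ref{grupos->coherentes}, whose key input is Proposition \ref{>0pesos} on positivity of matrix entries in the basis $\base_{\Fund_\sroot}$, and is the reason the simply laced/simply connected hypothesis is present at this stage) and the Hasse diagram classification of Lemma \ref{diagramasClasif} underlying Corollary \ref{rg=r}. The remaining work is only to check, row by row in table \ref{tricota}, which ambient algebras $\ge$ are simply laced; this is immediate from the Dynkin diagrams.
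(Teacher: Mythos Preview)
Your proposal is correct and follows exactly the paper's intended argument: the paper does not even write a separate proof for this corollary, simply stating that it is given by Corollary \ref{grupos->coherentes} (together with the classification in \S\ref{clasif}). Your unpacking of the rank dichotomy via Corollaries \ref{rg=1} and \ref{rg=r} and the subsequent removal of the non--simply-laced row $\ge=\so(3,4)$ from table \ref{tricota} is precisely what is implicit there.
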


\begin{table}[h!]
  \begin{center}
    \begin{tabular}{r|r|r} 
      $\ge$ & $\h_{ss}$ & $\phi:\h_{ss}\to\ge$  \\
      \hline
     
      \multirow{2}{*}{$\sl_{2n+1}(\R)$} & $\so(n,n+1)$ $\forall n$ & defining representation \\
       & $\ge_2$ if $n=3$ & fundamental for the short root\\\hline
       $\sl_{2n}(\R)$ & $\sp(2n,\R)$ & defining representation \\\hline
      \multirow{4}{*}{$\so(n,n)$} & $\so(n-1,n)$ $\forall n\geq3$ & stabilizer of a non-isotropic line \\
       & $\so(3,4)$ if $n=4$ & fundamental for the short root \\\cline{3-3}
       & \multirow{2}{*}{$\ge_2$ if $n=4$} & stabilizes a non-isotropic line $L$ and is  \\ & & fundamental for the short root on $L^\perp$\\\hline 
      $\e_6$ & $\f_4$ & $\Fix(\inv_0)$ (Example \ref{Inv})
    \end{tabular}
    \caption{Statement of Corollary \ref{simplyL}} \label{cuartirolo}
  \end{center}
\end{table}

\subsection{Descent} The purpose of this section is to briefly explain how to bypass the simply-laced hypothesis in Corollary \ref{grupos->coherentes}. We use a standard technique called \emph{descent}. It consists on observing that every simple split Lie algebra $\ge$ is the fixed point set of an automorphism $\bar\auto:\dot\ge\to\dot\ge$ of a simply laced split simple Lie algebra $\dot\ge.$ One requires also that the action of $\bar\auto$ on the simple roots of $\dot\ge$ is such that if $\aa,\bb\in\simple_{\dot\ge}$ are in the same $\bar\auto$-orbit then $\<\aa,\bb\>=0.$ See Table \ref{cinque}.

\begin{table}[h!]
  \begin{center}
    \begin{tabular}{r|r|r} 
      type of $\dot\ge$ & type of $\ge$ & orbits of $\bar\auto$  \\\hline
     $\A_{2n-1}$ & $\Ce_{n}$ & $\begin{dynkinDiagram}A{}\draw[thick] (root 1) to [out=-45, in=-135] (root 4);\draw[thick] (root 2) to [out=-45, in=-135] (root 3);\end{dynkinDiagram}$ \\\hline
      $\D_n$ & $\B_{n-1}$ & $\begin{dynkinDiagram}D{}\draw[thick] (root 5) to [out=-45, in=45] (root 6);\end{dynkinDiagram}$ \\\hline
       $\D_4$ & $\Ge_2$  & $\begin{dynkinDiagram}D{4}\draw[thick] (root 1) to [out=-60, in=180] (root 4);\draw[thick] (root 3) to [out=-60, in=60] (root 4);\draw[thick] (root 1) to [out=60, in=180] (root 3);\end{dynkinDiagram}$\\\hline 
      $\EE_6$ & $\F_4$ & $\begin{dynkinDiagram}E6\draw[thick] (root 1) to [out=-45, in=-135] (root 6);\draw[thick] (root 3) to [out=-45, in=-135] (root 5);\end{dynkinDiagram}$
    \end{tabular}
    \caption{} \label{cinque}
  \end{center}
\end{table}

With these considerations, one has the following proposition from Lusztig.

\begin{prop}[{Lusztig \cite[\S 8.8]{Lusztig-TP}}]\label{descent}Let $\sf G$ be algebraically simply connected. Then there exists a simply laced, simply connected, simple split group $\dot{\sf G}$ and a rational representation $\Wedge:\sf G\to\dot{\sf G}$ together with an equivariant map $\mapa:\cal F_{\sf G}\to\cal F_{\dot{\sf G}}$ such that $$\mapa\Big(\big(\cal F_{\sf G}\big)_{>0}\Big)\subset \Big(\big(\cal F_{\dot{\sf G}}\big)_{>0}\Big).$$
\end{prop}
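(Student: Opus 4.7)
The plan is to realize $\ge$ as the fixed-point subalgebra of $\dot\ge$ under $\bar\auto$, as listed in table \ref{cinque}, and exploit the fact that the positive structures of $\sf G$ and $\dot{\sf G}$ can be chosen compatibly with this folding.

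First, I would take $\dot{\sf G}$ to be a simply-connected simply-laced split simple group of the type in the left column of table \ref{cinque}, equipped with an algebraic automorphism $\auto$ differentiating to $\bar\auto$; such $\auto$ exists and is unique up to inner automorphism by a standard fact about simply-connected groups. Since $\sf G$ is simply connected and $\ge = \dot\ge^{\bar\auto}$, the fixed-point subgroup $\dot{\sf G}^\auto$ is connected with Lie algebra $\ge$ and is canonically identified with $\sf G$; this identification furnishes the rational representation $\Wedge: \sf G \to \dot{\sf G}$ (which is in fact a closed embedding).

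Second, I would construct a pinning of $\sf G$ out of an $\auto$-equivariant pinning of $\dot{\sf G}$. Fix a pinning $\dot\pin = \{\dot\x_\aa, \dot\y_\aa : \aa \in \simple_{\dot\ge}\}$ such that $\auto \circ \dot\x_\aa = \dot\x_{\bar\auto(\aa)}$ and $\auto \circ \dot\y_\aa = \dot\y_{\bar\auto(\aa)}$, which is possible since all pinnings of $\dot{\sf G}$ are conjugate and the orbits of $\bar\auto$ on the Dynkin diagram of $\dot\ge$ carry a compatible choice. For each simple root $\sroot \in \simple_\ge$, corresponding to a $\bar\auto$-orbit $\{\aa_1, \ldots, \aa_k\} \subset \simple_{\dot\ge}$, set
$$\x_\sroot(t) = \dot\x_{\aa_1}(t) \cdots \dot\x_{\aa_k}(t), \qquad \y_\sroot(t) = \dot\y_{\aa_1}(t) \cdots \dot\y_{\aa_k}(t).$$
By hypothesis on $\bar\auto$, the roots $\aa_i$ within one orbit satisfy $\langle \aa_i, \aa_j\rangle = 0$, so the corresponding root spaces commute and the order in each product is immaterial; each product is $\auto$-invariant and hence lies in $\sf G$, and the Serre relations for $\sf G$ are easily verified, so $\pin = \{\x_\sroot, \y_\sroot\}$ is a pinning of $\sf G$.

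Third, I would check that this pinning sends the positive semigroup of $\sf G$ into that of $\dot{\sf G}$. Fixing a reduced expression $w_0 = r_N \cdots r_1$ in $\Weyl_\ge$, the element $\Psi^\pin(a_1, \ldots, a_N) = \x_{\sroot_N}(a_N) \cdots \x_{\sroot_1}(a_1)$ expands, when written in $\dot{\sf G}$, as a product of factors of the form $\dot\x_\aa(a_i)$ with $a_i > 0$. Since the subset $\dot U_{>0}$ is the semigroup generated by all such factors (a standard consequence of Lusztig's construction in \cite{Lusztig-TP}), the image $\Wedge\big(U_{>0}\big)$ is contained in $\dot U_{>0}$, and symmetrically for $\wk U_{>0}$. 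Defining the flag map $\mapa(g\cdot[\Borel]) = \Wedge(g)\cdot[\dot\Borel]$, which is well-defined since $\Wedge$ sends the Borel of $\sf G$ determined by $\pin$ into that of $\dot{\sf G}$ determined by $\dot\pin$, the inclusion $\mapa(\posF) \subset (\cal F_{\dot{\sf G}})_{>0}$ is then immediate: any $x = \wk u \cdot [\Borel] \in \posF$ with $\wk u \in \wk U_{>0}$ satisfies $\mapa(x) = \Wedge(\wk u) \cdot [\dot\Borel] \in \dot{\wk U}_{>0} \cdot [\dot\Borel]$, which is by definition contained in $(\cal F_{\dot{\sf G}})_{>0}$.

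The main delicate point is the triality case $(\dot\ge, \ge) = (\D_4, \Ge_2)$, where $\bar\auto$ has order three and the compatibility of a pinning with $\auto$ requires choosing $\auto$ so as to cyclically permute the three extremal roots of $\D_4$ while fixing the central one; the corresponding group-level automorphism exists because the simply-connected form of $\D_4$ admits outer automorphisms realizing the full symmetric group on the three extremal nodes, and once this choice is made the argument above proceeds verbatim.
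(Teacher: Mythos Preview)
The paper does not prove this proposition: it is quoted directly from Lusztig \cite[\S 8.8]{Lusztig-TP} and used as a black box. So there is no ``paper's own proof'' to compare against; your sketch is an attempt to reconstruct Lusztig's argument, and its overall architecture (folding an $\auto$-equivariant pinning of $\dot{\sf G}$ down to a pinning of $\sf G$, then checking compatibility of the positive semigroups) is indeed the right one.

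There is, however, a genuine gap in your third step. You assert that $\dot U_{>0}$ is ``the semigroup generated by all such factors'' $\dot\x_\aa(t)$ with $t>0$. This is false: a single element $\dot\x_\aa(t)$ is \emph{not} in $\dot U_{>0}$, since by definition every element of $\dot U_{>0}$ is a product of exactly $\dot N=|\root^+_{\dot\ge}|$ factors indexed by a reduced expression of $\dot w_0$, with all parameters strictly positive. The semigroup generated by the $\dot\x_\aa(t)$ for $t>0$ is strictly larger than $\dot U_{>0}$ (its closure is $\dot U_{\geq0}$). What you actually need is the following: if $w_0=r_{\sroot_N}\cdots r_{\sroot_1}$ is a reduced expression in $\Weyl_\ge$, then replacing each $r_{\sroot_j}$ by the product $\prod_{\aa\in f^{-1}(\sroot_j)}r_\aa$ of commuting simple reflections in $\Weyl_{\dot\ge}$ yields a \emph{reduced} expression for $\dot w_0$. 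This is true (one checks that the total length $\sum_j|f^{-1}(\sroot_j)|$ equals $|\root^+_{\dot\ge}|$ and that the resulting element sends $\dot\a^+$ to $-\dot\a^+$), and once established, your expanded product $\Wedge\big(\Psi^\pin(a_1,\ldots,a_N)\big)$ is literally of the form $\dot\Psi^{\dot\pin}(\ldots)$ for this reduced word, hence lies in $\dot U_{>0}$. Without this lemma your argument does not go through.
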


We can now conclude the proof of Theorem \ref{tA}.

\begin{cor}\label{final} Let $\sf G$ be the real points of a real-algebraic, Zariski connected, simple split group. Let $\grupo<\sf G$ be as in Theorem \ref{tA}. Then the semi-simple part $\h_{ss}$ is either $\ge,$ a principal $\sl_2(\R)$ or $\Int\ge$-conjugated to the possibilities listed in Table \ref{tricota}.
\end{cor}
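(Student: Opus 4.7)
The plan is to bootstrap Corollary \ref{simplyL} from the simply-laced, simply-connected case to the general case via Lusztig's descent (Proposition \ref{descent}), thereby establishing total coherence of $\grupo$ in $\sf G$ and then invoking Corollaries \ref{rg=1} and \ref{rg=r}. First, I would reduce to the simply-connected case by passing to the algebraic universal cover $\tilde{\sf G}\to\sf G$ and lifting $\grupo$: the cover is finite, so the Lie algebra $\h_{ss}$ is unchanged, minimality of each $\Lim{\grupo,\{\sroot\}}$ is preserved, and a positive loxodromic triple remains positive with respect to a lifted pinning. Hence we may assume $\sf G$ simply connected.

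Second, I would apply Proposition \ref{descent} to obtain a simply laced, simply connected, simple split group $\dot{\sf G}$, a rational morphism $\Wedge:\sf G\to\dot{\sf G}$, and an equivariant map $\mapa:\cal F_{\sf G}\to\cal F_{\dot{\sf G}}$ that sends positive flags to positive flags. Setting $\dot\grupo=\Wedge(\grupo)$, I would verify that $\dot\grupo$ satisfies the hypotheses of Corollary \ref{grupos->coherentes}. Applying $\mapa$ to the given positive loxodromic triple $(g_{0+},x_0,g_{0-})$ produces a triple in $\cal F_{\dot{\sf G}}$ which is positive by Proposition \ref{descent} and whose outer entries are $\mapa(g_{0\pm})$; since purely loxodromic elements act proximally on every flag variety of $\dot{\sf G}$, $\Wedge(g_0)$ is purely loxodromic with these as attracting and repelling flags. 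For minimality, observe that $\Lim{\dot\grupo,\{\dot\sroot\}}$ is the $\mapa$-image of the limit set of $\grupo$ on the partial flag variety of $\sf G$ associated to $\dot\sroot$, which in turn is a continuous equivariant image of $\Lim{\grupo,\simple}$; minimality of each $\Lim{\grupo,\{\sroot\}}$ forces minimality of $\Lim{\grupo,\simple}$ (via the fibers of the projections $\cal F\to\cal F_{\{\sroot\}}$) and therefore of all its continuous equivariant images. Corollary \ref{grupos->coherentes} then yields that $\dot\grupo$ is totally coherent in $\dot{\sf G}$.

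Third, I would transfer total coherence from $\dot\grupo$ in $\dot{\sf G}$ back to $\grupo$ in $\sf G$: for each $\sroot\in\simple_\ge$ one chooses $\dot\sroot\in\simple_{\dot\ge}$ above $\sroot$ under the descent automorphism $\bar\auto$ (table \ref{cinque}); the fundamental representation $\Fund_\sroot:\sf G\to\GL(V_\sroot)$ is then realized as an irreducible $\sf G$-summand of the composition $\Fund_{\dot\sroot}\circ\Wedge:\sf G\to\GL(V_{\dot\sroot})$, and the coherence data for $\Fund_{\dot\sroot}(\dot\grupo)$ --- proximal element, attracting eigenline, second-eigenvalue subspace, minimality of the limit sets on $\P(V_{\dot\sroot})$ and $\P(\wedge^2 V_{\dot\sroot})$ --- restrict to the required coherence data for $\Fund_\sroot(\grupo)$. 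Once total coherence of $\grupo$ in $\sf G$ is established, Corollaries \ref{rg=1} and \ref{rg=r} furnish the desired classification: $\h_{ss}$ is $\ge$, a principal $\sl_2(\R)$, or $\Int\ge$-conjugated to one of the entries of table \ref{tricota}. The main obstacle lies precisely in this third step: verifying that both clauses of Definition \ref{coherente} (the location of the second eigenline inside $\spa\Lim{\Fund_\sroot(\grupo)}^\P$, and minimality of the limit sets on $\P(V_\sroot)$ and $\P(\wedge^2V_\sroot)$) descend from $\dot{\sf G}$ to $\sf G$ through the chosen irreducible summand. This reduces to the explicit branching behavior of $\Fund_{\dot\sroot}$ under the automorphism $\bar\auto$ dictated by table \ref{cinque}, together with tracking limit sets under the partial flag projection induced by $\Wedge$.
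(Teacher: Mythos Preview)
Your first two steps coincide with the paper's argument. The divergence is in the third step, and this is where you create work that the paper avoids entirely.

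The paper does \emph{not} transfer total coherence back down to $\sf G$. Instead it observes that, under the descent of Proposition~\ref{descent}, the morphism $\Wedge:\sf G\to\dot{\sf G}$ realizes $\sf G$ as the fixed-point set of an algebraic automorphism of $\dot{\sf G}$ (table~\ref{cinque}), hence as a Zariski-closed subgroup. Consequently the Zariski closure of $\Wedge(\grupo)$ computed in $\dot{\sf G}$ is exactly the image of $\sf H$ under $\Wedge$, and its semi-simple part is $\h_{ss}$ itself. One then applies Corollary~\ref{simplyL} \emph{inside $\dot{\sf G}$}: it classifies the pair $(\h_{ss},\h_{ss}\hookrightarrow\dot\ge)$, and since $\h_{ss}\subset\ge\subset\dot\ge$ one simply reads off from table~\ref{cuartirolo} which entries are compatible with the constraint $\h_{ss}\subset\ge$. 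A glance at each row of table~\ref{cinque} shows this recovers table~\ref{tricota} (e.g.\ for $\ge=\sp(2n,\R)$, $\dot\ge=\sl_{2n}(\R)$: the only entries of table~\ref{cuartirolo} contained in $\sp(2n,\R)$ are $\sp(2n,\R)$ and the principal $\sl_2$; for $\ge=\so(3,4)$, $\dot\ge=\so(4,4)$: one picks up the extra possibility $\ge_2$; etc.).

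Your proposed third step --- restricting $\Fund_{\dot\sroot}$ to $\sf G$, isolating the summand $\Fund_\sroot$, and checking that both clauses of Definition~\ref{coherente} survive the passage to this summand --- may well be feasible, but it is precisely the branching analysis you flag as the main obstacle and leave unfinished. The paper's route sidesteps it completely: there is no need to know that $\grupo$ is totally coherent in $\sf G$, because the classification is already available in $\dot{\sf G}$ and the Zariski closure does not change.
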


\begin{proof}By passing to a finite cover we can assume that $\sf G$ is simply connected, the pre-image of $\grupo$ under this covering has again minimal limit sets and its limit set on $\cal F$ contains a positive loxodromic triple. From Proposition \ref{descent} one finds a simply-laced $\dot{\sf G}$ and a rational representation $\Wedge:\sf G\to\dot{\sf G}$ such that $\Wedge\grupo$ is partially positive. Applying Corollary \ref{simplyL} to $\Wedge\grupo$ gives the required result.\end{proof}

\subsection{Partially positive representations preserve type}\label{positiveReps} Recall that $\sf G$ is the real points of a real algebraic, Zariski connected, simple split group.

Let $\gripe$ be a proper Gromov-hyperbolic space and $\G<\isom(\gripe)$ a non-elementary discrete subgroup, then one has the following facts from Ghys-de la Harpe \cite[\S 8.2]{ghysharpe}:\begin{itemize}\item[i)] the action of $\G$ on the visual boundary of $X$ has a smallest closed $\G$-invariant subset denoted by $\bord X_\G$, the $\G$-action on $\bord X_\G$ is thus minimal;
\item[ii)] every $\g\in\G$ is either \begin{itemize}\item[-] of finite order (called \emph{elliptic}),\item[-] \emph{proximal}, i.e. has two fixed points $\g_-,\g_+\in \circulo$ such that for every $x\in \circulo-\{\g_-\}$ one has $\g^nx\to\g_+$ as $n\to+\infty,$\item[-] \emph{parabolic}, i.e. has a unique fixed point $x_\g\in\circulo$ and every $x\in\circulo$ converges to $x_\g$ under the iterates $\g^n$ as $n\to+\infty$ (some points will drift away from $x_\g$ before coming back though). \end{itemize}\item[iii)] The attracting points of proximal elements are dense in $\circulo.$
\end{itemize}

Let us fix throughout this subsection a partially positive representation $\rho:\G\to\sf G$ with continuous $\rho$-equivariant map $\xi:\circulo\to\cal F.$ We begin by showing that it is type preserving. Recall from Equation \eqref{uni} the definition of unipotent radical.

\begin{prop}\label{typeGamma}If $\g\in\G$ is proximal then $\rho(\g)$ is proximal on $\cal F$ with attracting flag $\xi(\g_+)$ and repelling flag $\xi(\g_-).$ If $\upupsilon\in\G$ is parabolic then there exists $k\in\N_{\geq1}$ such that $\rho(\upupsilon^k)$ belongs to the unipotent radical of $\xi(x_\upupsilon),$ moreover, there exists an open set $\cal O\subset \cal F$ such that $\upupsilon^nz\to\xi(x_\upupsilon)$ for every $z\in\cal O.$\end{prop}

\begin{proof}We divide the proof into Lemmas \ref{lox} and \ref{parab} below.\end{proof}

Let $\sf M$ be the centralizer in $\sf K$ of $\exp\a,$ as $\ge$ is split this is a finite group. For $\sroot\in\simple,$ let us denote by $\Fund=\Fund_\sroot:\sf G\to\SL(V)$ and by $\mapa:\cal F\to\P(V),$ $\wk{\mapa}:\cal F\to\P(V^*)$ the corresponding $\Fund$-equivariant maps (recall \S\,\ref{map}).

\begin{lemma}\label{lox} For every proximal $\g\in\G,$ $\Fund\rho(\g)$ is proximal with attracting line $\mapa\xi(\g_+)$ and repelling hyperplane $\wk{\mapa}\xi(\g_-).$
\end{lemma}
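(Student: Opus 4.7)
The plan is to use partial positivity to produce a single line whose iterates under $\Fund\rho(\g)$ converge to the expected attractor, and then to upgrade this one-orbit statement to proximality of $\Fund\rho(\g)$ on $\P(V)$ by means of Lusztig's positive basis.

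First, I would exploit equivariance. Since $\g$ fixes $\g_\pm\in\circulo$, the element $\rho(\g)$ fixes both flags $\xi(\g_\pm)$. Partial positivity applied to the pair $(\g_+,\g_-)$ furnishes $y\in\circulo$ such that $\big(\xi(\g_+),\xi(y),\xi(\g_-)\big)$ is positive; in particular $\xi(\g_+)$ and $\xi(\g_-)$ are transverse, so $\rho(\g)$ lies in the intersection of the opposite minimal parabolics stabilizing them, which for split $\sf G$ reduces to the Levi $\sf M\cdot A$. Writing $\rho(\g)=m\exp(H)$ with $m\in\sf M$ of finite order and $H\in\a$, the hyperbolic dynamics of $\g$ on $\circulo$ give $\g^n z\to\g_+$ for every $z\neq\g_-$, so continuity of $\xi$ and equivariance yield $\rho(\g)^n\xi(z)\to\xi(\g_+)$ in $\cal F$, and hence $\Fund(\rho(\g))^n\mapa\xi(z)\to V^+=\mapa\xi(\g_+)$ in $\P(V)$.

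Next I would normalize so that $\xi(\g_+)=[\Borel]$, $\xi(\g_-)=[\wk\Borel]$, and $\xi(y)=u\cdot[\wk\Borel]$ with $u\in U_{>0}$. When $\sf G$ is simply laced, Lusztig's positive basis (Proposition \ref{>0pesos}) implies that the matrix of $\Fund(u)$ in $\base_\Fund$ has strictly positive entries, so the vector $\mapa\xi(y)=\Fund(u)\,\mapa([\wk\Borel])$ has nonzero component in every restricted weight space of $V$. Passing to a subsequence $n_i$ along which $m^{n_i}=\id$ (by finiteness of $\sf M$), $\Fund(\rho(\g))^{n_i}=\exp(n_i\,\Fund(H))$ is diagonal in the weight decomposition, and projective convergence to $V^+$ then forces $\peso_\sroot(H)>\chi(H)$ for every weight $\chi\neq\peso_\sroot$ of $\Fund$. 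This is precisely proximality of $\Fund\rho(\g)$ with attracting line $\mapa\xi(\g_+)$; the $\wk\Borel$-invariant hyperplane $V^-=\wk\mapa\xi(\g_-)$, being $\rho(\g)$-stable, is then the repelling hyperplane.

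For general $\sf G$, I would reduce to the simply laced case by descent. Proposition \ref{descent} furnishes $\Wedge:\sf G\to\dot{\sf G}$ with $\dot{\sf G}$ simply laced, together with an equivariant map $\cal F_{\sf G}\to\cal F_{\dot{\sf G}}$ sending positive data to positive data. Consequently $\Wedge\circ\rho$ is partially positive into $\dot{\sf G}$, and the simply laced argument above applied there produces proximality of $\Fund_{\dot\sroot}\Wedge\rho(\g)$ on each $\P(V_{\dot\sroot})$. The resulting spectral information on $H$ (now viewed inside $\a_{\dot{\sf G}}$) forces $\sroot(H)>0$ for every $\sroot\in\simple$, so that $\rho(\g)$ itself is proximal on $\cal F_{\sf G}$, and the explicit attracting and repelling identifications for each fundamental representation of $\sf G$ then follow from equivariance of $\mapa$ and $\wk\mapa$. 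I expect the principal technical obstacle to lie in this last reduction, namely matching the root/weight data on $\sf G$ with what the descent correspondence produces on $\dot{\sf G}$ in a way that preserves the precise geometric identifications requested by the lemma.
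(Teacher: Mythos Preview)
Your approach is essentially the paper's: normalize so that $\xi(\g_\pm)$ are the standard opposite Borels, write $\rho(\g)=m\exp(H)$ with $m\in\sf M$ of finite order, use partial positivity to produce a point $y$ whose image $\mapa\xi(y)$ has a nonzero component in every basis vector of Lusztig's canonical basis, and then let the dynamics $\g^n y\to\g_+$ force the highest-weight eigenvalue to strictly dominate all others. The descent step is also handled the same way.

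There is, however, a concrete error in your second paragraph. You write $\xi(y)=u\cdot[\wk\Borel]$ with $u\in U_{>0}$ and then assert that ``the matrix of $\Fund(u)$ in $\base_\Fund$ has strictly positive entries.'' This is false: $u\in U_{>0}\subset U$ is unipotent, so $\Fund(u)$ is unipotent and hence triangular in any basis adapted to the weight filtration; it has many zero entries. Proposition~\ref{>0pesos} as stated in the paper applies only to $g\in\sf G_{>0}=\wk U_{>0}\,A\,U_{>0}$, not to elements of $U_{>0}$ alone. The fix is immediate and is exactly what the paper does: since $\posF=\sf G_{>0}\cdot[\Borel]$, write instead $\xi(y)=g\cdot[\Borel]$ with $g\in\sf G_{>0}$; then $\mapa\xi(y)=\Fund(g)V^+$, and now Proposition~\ref{>0pesos} legitimately gives that $\Fund(g)v$ has strictly positive coefficients in $\base_\Fund$ for any nonzero $v\in V^+$. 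With this correction your argument goes through verbatim.
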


\begin{proof}By passing to a finite cover we can assume that $\sf G$ is algebraically simply connected. In view of Proposition \ref{descent} we can also assume that $\sf G$ is simply laced and thus make use of Lusztig's canonical basis $\base_{\Fund_\sroot}$ (Proposition  \ref{>0pesos}).

By conjugating $\rho$ we may assume that $\xi(\g_+)=[\Borel]$ and that $\xi(\g_-)=[\wk{\Borel}].$  Since $\rho(\g)$ fixes both complete flags $\xi(\g_+)$ and $\xi(\g_-),$ it can be written as \begin{equation}\label{diagonalGamma}\rho(\g)=m_{\rho(\g)}\exp(a_\g)\end{equation} for a unique $a_\g\in\a$ and $m_{\rho(\g)}\in\sf M.$

The composition $\mapa\xi:\circulo\to\P(V)$ is a continuous $\Fund\rho$-equivariant map. By the assumptions $\xi(\g_+)=[\Borel]$ and $\xi(\g_-)=[\wk{\Borel}],$ one has $$\mapa\xi(\g_+)=V^+\textrm{ and }\wk\mapa\xi(\g_-)=V^-=\bigoplus_{\chi\in\poids(\fund)-\{\peso_\sroot\}}V^\chi$$ respectively.

By definition there exists $x\in\circulo$ distinct from $\g_+$ and $\g_-$  and $g\in \sf G_{>0}$ such that $\xi(x)=g\xi(\g_+).$ Lusztig's Proposition \ref{>0pesos} states, in particular, that if $v\in V^+$ is non-zero then $\Fund_\sroot(g)v=\sum_{\bf e\in\base_{\Fund}} c_{\bf e}\bf e$ with $c_{\bf e}>0$ for all $\bf e.$  

Additionally, Equation (\ref{diagonalGamma}) implies that $\Fund\rho(\g)$ is the commuting product of a matrix diagonal in $\base_{\Fund}$ and a finite order element. Let us denote thus by $ \Omega_{\bf e}(\g)$ the (possibly complex) eigenvalue of $\Fund\rho(\g)$ of the vector $\bf e\in\base_{\Fund}$ and let $\mu_1(\bar\phi\rho(\g))$ be the spectral radius of $\bar\phi\rho(\g)$.

If $k$ is the order of $m_{\rho(\g)},$ then $\Fund\big(\rho(\g)^k\big)$ is diagonal in $\base_{\Fund},$ so that $\Omega_{\bf e}(\g)^k\in\R$ and one has for all $n\in\N$\begin{equation}\label{diag}\frac1{\mu_1\big(\Fund\rho(\g)\big)^{nk}}\big(\Fund\rho(\g^{nk})\big)(gv)=\sum_{\bf e\in\base_{\Fund_\sroot}}\Big(\frac{\Omega_{\bf e}(\g)}{\mu_1\big(\Fund\rho(\g)\big)}\Big)^{nk}c_{\bf e}\bf e.\end{equation}

Since $\g^nx\to\g_+,$ equivariance implies $\Fund\rho(\g^n)(gV^+)\to V^+.$ Consequently, given that $c_{\bf e}>0,$ Equation \eqref{diag} yields $$|\Omega_{\bf e}(\g)|<\mu_1\big(\Fund\rho(\g)\big)$$ for every $\bf e$ except the one in $V^+$ and thus the spectral radius of $\Fund\rho(\g)$ is (only) attained on $V^+.$ Consequently $\Fund\rho(\g)$ is proximal and $\mapa\xi(\g_+)$ is the attracting point of $\bar\phi\rho(\g)$.\end{proof}

\begin{lemma}\label{parab}Let $\upupsilon\in\G$ be parabolic with fixed point $x_\upupsilon,$ then there exists $k\in\N_{\geq1}$ such that $\rho(\upupsilon^k)$ belongs to the unipotent radical of $\xi(x_\upupsilon),$ moreover, there exists an open set $\cal O\subset \cal F$ such that $\upupsilon^nz\to\xi(x_\upupsilon)$ for every $z\in\cal O.$\end{lemma}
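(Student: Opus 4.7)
The proof splits into two parts: (i) producing a power $\rho(\upupsilon^k)$ in the unipotent radical of the stabilizer of $\xi(x_\upupsilon)$, and (ii) exhibiting the open basin. As in the proof of Lemma \ref{lox}, pass to the simply connected cover and use Proposition \ref{descent} to reduce to the simply laced case, so Lusztig's canonical basis $\base_{\Fund_\sroot}$ from Proposition \ref{>0pesos} is available. After a $\sf G$-conjugation assume $\xi(x_\upupsilon)=[\Borel]$; then $\rho(\upupsilon)\in\Borel$ admits a Langlands decomposition $\rho(\upupsilon)=m\exp(a_\upupsilon)u$ with $m\in\sf M$, $a_\upupsilon\in\a$, $u\in U$, and the goal of (i) is $a_\upupsilon=0$.

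Pick $z\in\circulo-\{x_\upupsilon\}$; partial positivity gives $y\in\circulo$ with $(\xi(x_\upupsilon),\xi(y),\xi(z))$ positive. A further conjugation by an element of $\Borel$, which preserves the $\a$-component $a_\upupsilon$, arranges $\xi(z)=[\wk\Borel]$ and $\xi(y)=h\cdot[\wk\Borel]$ for some $h\in U_{>0}$. For a nonzero vector $v^-$ in the lowest-weight line of $V_\sroot$, Proposition \ref{>0pesos} gives that $\Fund_\sroot(h)v^-$ has strictly positive coordinates on every element of $\base_{\Fund_\sroot}$. Since $\rho(\upupsilon)\in\Borel$, the matrix of $\Fund_\sroot\rho(\upupsilon)$ in $\base_{\Fund_\sroot}$ is upper triangular, with diagonal entries of moduli $e^{\chi(a_\upupsilon)}$, $\chi\in\poids(\fund_\sroot)$. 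The convergence $\rho(\upupsilon)^n\xi(y)\to\xi(x_\upupsilon)$, read projectively via $\mapa_{\Fund_\sroot}$ on a vector with nonzero component in every weight direction, forces strict dominance of the highest weight, so $\peso_\sroot(a_\upupsilon)>\chi(a_\upupsilon)$ for all $\chi\ne\peso_\sroot$; specializing to $\chi=\peso_\sroot-\sroot$ gives $\sroot(a_\upupsilon)>0$ for every $\sroot\in\simple$, placing $a_\upupsilon$ in the open Weyl chamber $\a^+$. The element $\upupsilon^{-1}$ is parabolic with the same fixed point, and a direct calculation (using that $\sf M$ centralizes $A$ and $A$ normalizes $U$) shows that the $\a$-component of the Langlands decomposition of $\rho(\upupsilon^{-1})$ equals $-a_\upupsilon$; applying the same argument puts $-a_\upupsilon$ in $\a^+$, which is inconsistent with $a_\upupsilon\in\a^+$ unless $a_\upupsilon=0$. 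Hence $\rho(\upupsilon)=mu$, and since $\sf M$ is finite (as $\ge$ is split) and normalizes $U$, $\rho(\upupsilon^k)=(mu)^k\in U$ for $k=|\sf M|$.

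For (ii), write $\eta:=\rho(\upupsilon^k)=\exp(X)$ with $X\in\frak n$. Since $\xi$ is injective on $\circulo$ (by transversality of distinct pairs) and $\circulo$ has more than one point, $\eta\ne 1$ and so $X\ne 0$. The basin $\cal B=\{w\in\cal F:\eta^n w\to[\Borel]\}$ contains $\xi(\circulo-\{x_\upupsilon\})\ne\emptyset$, since $\eta^n\xi(x)=\xi(\upupsilon^{nk}x)\to\xi(x_\upupsilon)$; by the algebraic dynamics of a nontrivial unipotent element on a flag variety (its complement is a proper $\eta$-invariant algebraic subvariety of $\cal F$), $\cal B$ is open. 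Writing $n=mk+j$ with $0\le j<k$ and setting $\cal O=\bigcap_{j=0}^{k-1}\rho(\upupsilon)^{-j}\cal B$ produces a nonempty open set with $\rho(\upupsilon)^n w\to\xi(x_\upupsilon)$ for all $w\in\cal O$. The most delicate step is the spectral dominance in (i): the strict positivity of the coefficients of $\Fund_\sroot(h)v^-$ in $\base_{\Fund_\sroot}$ is precisely what prevents accidental cancellations between weight contributions of equal modulus when iterating the upper-triangular matrix $\Fund_\sroot\rho(\upupsilon)$.
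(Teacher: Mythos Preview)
There is a genuine gap in part (i). You assert that the convergence $\Fund_\sroot\rho(\upupsilon)^n\mapa\xi(y)\to V^+$, for a line with nonzero component in every weight direction, forces \emph{strict} dominance $\peso_\sroot(a_\upupsilon)>\chi(a_\upupsilon)$ for every $\chi\ne\peso_\sroot$. This is false: when $a_\upupsilon=0$ and $u$ is, say, regular unipotent, $\Fund_\sroot\rho(\upupsilon^k)$ is unipotent upper triangular in $\base_{\Fund_\sroot}$, a generic line still converges to $V^+$ under iteration (driven by the nilpotent part, not by any spectral gap), yet all diagonal moduli equal $1$. Since $a_\upupsilon=0$ is exactly the conclusion you are after, the argument is self-defeating; and logically, from $a_\upupsilon\in\inte\a^+$ together with $-a_\upupsilon\in\inte\a^+$ you obtain an outright contradiction with no hypothesis to discharge, not $a_\upupsilon=0$. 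The paper proves instead only the weak statement that $V^+$ lies in the generalized eigenspace $V_{\lambda_1}$ for the spectral radius $\lambda_1$ of $\Fund_\sroot\rho(\upupsilon^k)$: the all-positive line has nonzero projection to $V_{\lambda_1}$, hence its iterates accumulate on $\P(V_{\lambda_1})$, so the limit $V^+$ lies there; running the same for $\upupsilon^{-1}$ shows $\lambda_1^{-1}$ is also a spectral radius, and $\det=1$ then forces $\lambda_1=1$ and all diagonal entries equal $1$. Your route is repairable by replacing strict with non-strict inequalities, but justifying the non-strict inequality needs precisely this spectral-radius argument, not the bare convergence.

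Part (ii) has a related gap: the claim that for any nontrivial unipotent $\eta\in U$ the basin of $[\Borel]$ is the complement of a proper subvariety of $\cal F$ is false without a regularity hypothesis on $\eta$. For $\sf G=\SL_3(\R)$ and $\eta=\x_\sroot(1)$ for a single $\sroot\in\simple$, generic flags converge under $\eta^n$ to $\big(e_1,\spa(e_1,e_3)\big)$ rather than to $[\Borel]=\big(e_1,\spa(e_1,e_2)\big)$, and the basin of $[\Borel]$ is one-dimensional inside the three-dimensional flag variety. The paper instead uses directly that $\mapa\xi(x)$ has all positive coordinates in $\base_{\Fund_\sroot}$ to place it outside every proper $\Fund_\sroot\rho(\upupsilon)$-invariant subspace, and argues stability of the convergence on a neighborhood from that.
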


\begin{proof} Again we can assume that $\sf G$ is simply laced and algebraically simply connected and make use of Lusztig's canonical basis $\base_{\Fund}$ (Proposition  \ref{>0pesos}). We assume moreover that $\xi(x_\upupsilon)=[\Borel]$ and that $[\wk\Borel]=\xi(z_0)$ for some auxiliary point $z_0\in\circulo.$ One has then $\mapa\xi(x_\upupsilon)=V^+.$ Let us write \begin{equation}\label{descomposicion}\rho(\upupsilon)=m_{\rho(\upupsilon)} \exp(a_\upupsilon) u_\upupsilon \end{equation} where $m_\upupsilon\in\sf M$ has finite order, commutes with $\exp a_\upupsilon\in A$ and normalizes $u_\upupsilon\in U.$ 

Since every element of $\bf e\in\base_{\Fund}$ belongs to a restricted weight space $V_{\chi_{\bf e}}$ of $\Fund,$ we can order $\base_{\Fund}$ so that $\bf e\geq\bf f$ if $\chi_{\bf e}\succ\chi_{\bf f},$ (the order between elements lying in the same weight space, or between weight spaces of the same level, is not relevant for the following). The elements of $A\cdot U$ are upper triangular in $\base_{\Fund},$ so if $k$ is the order of $m_{\rho(\upupsilon)}$ then the transformation $\Fund\rho(\upupsilon^k)$ is upper triangular in $\base_{\Fund}.$

Let us denote by $\mu_1=\exp\lambda_1\big(\Fund\rho(\upupsilon^k)\big)$ the spectral radius of $\Fund\rho(\upupsilon^k)$ and by $V_{\mu_1}$ the sum of Jordan blocks of $\Fund\rho(\upupsilon^k)$ associated to $\mu_1.$ Since $\Fund$ has values in $\SL(V)$ (because $\sf G$ is simple) one has $\mu_1\geq1$.

By Equation \eqref{descomposicion} and the definition of $\base_{\Fund}$, the intersection $V_{\mu_1}\cap\base_{\Fund}$ is a basis of $V_{\mu_1}.$ Denote by $\pi:V\to V_{\mu_1}$ the projection parallel to the vector space spanned by the remaining elements of $\base_{\Fund}.$  If $\ell\in \P(V)$ is not contained in $\ker\pi$ then one has \begin{equation}\label{con}d_\P\big(\Fund\rho(\upupsilon)^{kn}\cdot \ell,\P(V_{\mu_1})\big)\to0\end{equation} as $n\to\infty.$


By definition, there exists $x\in\circulo-\{x_\upupsilon,z_0\}$ and $g\in\sf G_{>0}$ such that $\xi(x)=g\cdot[\Borel].$ As before, if $v\in V^+$ is non-zero then $\Fund(g)v$ has positive coefficients in $\base_{\Fund}.$ This implies, in particular, that $\mapa\xi(x)=\Fund(g)V^+\nsubset\ker\pi.$ Since $\upupsilon^nx\to x_\upupsilon$ one has $\Fund\rho(\upupsilon)^n\big(\mapa\xi(x)\big)\to \mapa\xi(x_\upupsilon),$ which combined with Equation \eqref{con} gives $\mapa\xi(x_\upupsilon)\in V_{\mu_1}.$ In particular, $$\Fund(\rho(\upupsilon^k))v=\mu_1 v.$$ 


Additionally, since $\upupsilon^{-1}$ is also parabolic with fixed point $x_\upupsilon$, the above argument applied to $\upupsilon^{-1}$ gives that $v\in V^+$ belongs to the eigenspace associated to the spectral radius of $\Fund\rho(\upupsilon^{-k})$. However $\Fund\rho(\upupsilon^{-k})v=\mu_1^{-1}v$ so the spectral radius of $\Fund\rho(\upupsilon^{-k})$ is $\mu_1^{-1}\leq1$. Since the spectral radius of any element is at least 1, we obtain $\mu_1=1$ and that $\Fund\rho(\upupsilon)^k$ is upper triangular on $\base_{\Fund}$ with 1's in the diagonal, i.e. $\rho(\upupsilon^k)\in U.$ 

Considering $x\in\circulo-\{x_\upupsilon\}$ and $g\in\sf G_{>0}$ as before; one has that $\mapa\xi(x)=\Fund(g)V^+$ does not belong to a $\Fund\rho(\upupsilon)$-invariant subspace. Consequently, since $\Fund\rho(\upupsilon)^n\mapa\xi(x)\to \mapa\xi(x_\upupsilon),$ the same holds on a neighborhood of $\mapa\xi(x)$ and the lemma is proved.\end{proof}

The following is an immediate consequence of Proposition \ref{typeGamma}.

\begin{cor}\label{falta} If $\rho:\G\to\sf G$ is partially positive then it has minimal limit sets.
\end{cor}

\begin{proof}Indeed, Proposition \ref{typeGamma} readily implies that the limit set $\Lim{\rho(\G),\simple}=\xi(\circulo)$ and moreover that $\Lim{\rho(\G),\sroot}=p_\sroot\big(\xi(\circulo)\big),$ where $p_\sroot:\cal F\to\cal F_{\{\sroot\}}$ is the canonical projection. \end{proof}

\subsection{Proof of Theorem \ref{loOtro}} Corollary \ref{falta} together with Theorem \ref{tA} would complete the proof of Theorem \ref{loOtro} provided the Zariski closure of $\rho(\G)$ where reductive. The purpose of this subsection is thus to bypass the 'reductive Zariski closure' assumption. Consequently, Proposition \ref{semi} below and Theorem \ref{tA} prove Theorem \ref{loOtro}.

We begin by recalling the following lemma. It is a well known fact that the reader may check in Guéritaud-Guichard-Kassel-Wienhard \cite[\S 2.5.4]{GGKW} or in Benoist's lecture notes \cite{benoistnotas}.

\begin{lemma}\label{ZarRed} Let $\grupo$ be a group and let $\rho\in\hom(\grupo,\sf G)$ have non-solvable Zariski closure $\sf L.$ Let $\l=\h\oplus_\pi R_u(\h)$ be a Levi decomposition of the Lie algebra of $\sf L$ as a semi-direct product, with $\h$ reductive and $R_u(\h)$ its unipotent radical. Then there exists $\eta\in\hom(\grupo,\sf G)$ whose Zariski closure has Lie algebra $\h$ and a sequence $(g_n)\in\sf G$ with $g_n\rho g_n^{-1}\to\eta.$ 
\end{lemma}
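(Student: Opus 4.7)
The plan is to degenerate $\rho$ to $\eta$ by conjugating along a one-parameter subgroup of $\sf G$ that contracts the unipotent radical of $\sf L$ to the identity while fixing a chosen Levi pointwise. First I would fix a Levi decomposition at the group level, $\sf L = \sf H \ltimes \sf R_u$ with $\mathrm{Lie}\,\sf H = \h$ and $\mathrm{Lie}\,\sf R_u = R_u(\l)$, so that the associated projection $p\colon \sf L \to \sf H$ is an algebraic group homomorphism restricting to the identity on $\sf H$. The candidate limit representation is
\[
\eta := p\circ\rho \colon \grupo \to \sf H \subset \sf G,
\]
which is a homomorphism; moreover, surjectivity of $p$ forces $\Zariski{\eta(\grupo)} = p(\Zariski{\rho(\grupo)}) = p(\sf L) = \sf H$, a group whose Lie algebra is exactly $\h$.

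The crux is the construction of a one-parameter subgroup $\lambda\colon \R_{>0} \to \sf G$ such that $\Ad\lambda(t)$ acts as the identity on $\h$ and has strictly positive exponents in $t$ on every weight space of $R_u(\l)$. Non-solvability of $\sf L$ ensures that $\sf H$ is a nontrivial reductive algebraic group and therefore contains a nontrivial $\R$-split torus $\sf T$; one then decomposes $R_u(\l)$ into $\sf T$-weight spaces and chooses a cocharacter $\lambda$ generically in $X_*(\sf T)\otimes\R$ (arguing inductively along the lower central series of $R_u(\l)$ if the zero weight space of $\sf T$ on $R_u(\l)$ is nontrivial) so that its pairing with every weight is strictly positive. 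This existence is precisely a Hilbert--Mumford-type statement for the $\sf G$-orbit of $\l$ in the variety of subalgebras of $\ge$, and is the standard content of \cite[\S 2.5.4]{GGKW}.

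Granting the cocharacter $\lambda$, set $g_n := \lambda(t_n)$ for a sequence $t_n\searrow 0$, and write each $\rho(\gamma) = \eta(\gamma)\cdot u(\gamma)$ with $u(\gamma) := \eta(\gamma)^{-1}\rho(\gamma) \in \sf R_u$. Since $\lambda(t_n)$ centralizes $\sf H$ and contracts $\sf R_u$ to the identity,
\[
g_n \rho(\gamma) g_n^{-1} = \eta(\gamma) \cdot \bigl(g_n u(\gamma) g_n^{-1}\bigr) \longrightarrow \eta(\gamma)
\]
pointwise in $\gamma \in \grupo$, which yields the required convergence $g_n\rho g_n^{-1} \to \eta$.

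The main obstacle is thus the middle step: producing a cocharacter in $\sf G$ that simultaneously centralizes $\h$ and contracts every weight space of $R_u(\l)$. The subtlety is that $\sf T$ may act with a nontrivial zero weight space on $R_u(\l)$, and that the centralizer of $\sf H$ in $\sf G$ need not contain enough cocharacters to compensate directly; one must therefore exploit the algebraicity of the embedding $\sf L\subset\sf G$ and argue by an induction along the filtration of $R_u(\l)$ by its descending central series, which is where the cited references do the bulk of the technical work.
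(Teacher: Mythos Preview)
The paper does not give its own proof of this lemma; it simply records it as a well-known fact and points to \cite[\S 2.5.4]{GGKW} and Benoist's notes \cite{benoistnotas}. So there is no in-paper argument to compare against, and your overall plan --- define $\eta=p\circ\rho$ via the Levi projection and then conjugate $\rho$ toward $\eta$ along a one-parameter subgroup contracting $\sf R_u$ --- is exactly the standard one.

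There is, however, a genuine inconsistency in your construction of the cocharacter. You first require $\Ad\lambda(t)$ to act as the identity on $\h$, precisely so that $g_n\eta(\gamma)g_n^{-1}=\eta(\gamma)$ and your displayed identity $g_n\rho(\gamma)g_n^{-1}=\eta(\gamma)\cdot\big(g_n u(\gamma) g_n^{-1}\big)$ holds. But you then propose to take $\lambda$ in $X_*(\sf T)$ for a maximal split torus $\sf T$ of $\sf H$. Such a $\lambda$ lies \emph{inside} $\sf H$ and acts nontrivially on the root spaces of $\h$; it does not centralize $\sf H$. With this choice, $g_n\eta(\gamma)g_n^{-1}$ will typically diverge --- take any $\gamma$ with $\eta(\gamma)\notin Z_{\sf H}(\sf T)$ and a root of $(\sf H,\sf T)$ on which $\lambda$ is negative --- so the sequence $g_n\rho(\gamma)g_n^{-1}$ need not converge at all. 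The induction along the lower central series of $R_u(\l)$ addresses only the contraction of $\sf R_u$ and does nothing for this failure.

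The fix used in the references is to locate $\lambda$ in the centralizer of $\sf H$ in $\sf G$, not in $\sf H$ itself. By the Borel--Tits theorem, the unipotent group $\sf R_u$ is contained in the unipotent radical of some parabolic $\sf P$ of $\sf G$ with $\sf L\subset N_{\sf G}(\sf R_u)\subset\sf P$; since $\sf H$ is reductive and $\sf H\cap R_u(\sf P)$ is a normal unipotent subgroup of $\sf H$, hence trivial, one may choose a Levi factor $\sf M$ of $\sf P$ containing $\sf H$. A cocharacter $\lambda$ in the split center of $\sf M$ that contracts $R_u(\sf P)$ then centralizes $\sf M\supset\sf H$ and contracts $\sf R_u\subset R_u(\sf P)$, after which your final computation is correct verbatim.
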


As in Guéritaud-Guichard-Kassel-Wienhard \cite[\S 2.5.4]{GGKW}, we say that $\eta$ is the \emph{semi-simplification} of $\rho$ (regardless its Zariski closure is reductive and not necessarily semi-simple, and regardless of any uniqueness issues). We will also require the following slight modification of \cite[Proposition 4.13]{GGKW} whose proof works verbatim.

\begin{prop}[{\cite[Proposition 4.13]{GGKW}}]\label{semi-equi}Let $\grupo<\isom X$ be non-elementary and let $\rho:\Lambda\to\sf G$ be a representation with a continuous equivariant map $\xi:\bord X_\grupo\to\cal F$. Assume that $\xi$ is \begin{enumerate}\item transverse, i.e. for every $x\neq y$ one has $(\xi(x),\xi(y))\in\cal F^{(2)}$, and \item dynamics preserving, i.e. for every proximal $\g\in\grupo$ the image $\rho(\g)$ is proximal on $\cal F$ with attracting flag $\xi(\g_+)$.\end{enumerate} Then the semi-simplification $\eta:\grupo\to\sf G$ of $\rho$ also has a continuous $\eta$-equivariant map satisfying both these conditions.
\end{prop}

We proceed now to the main step.

\begin{prop}\label{semi}If $\rho:\G\to\sf G$ is partially positive then its semi-simplification $\eta$  has minimal limit sets and contains a positive loxodromic triple.
\end{prop}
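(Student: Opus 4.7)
The plan is to transfer the positive-triple and minimality properties from $\rho$ to its semi-simplification $\eta$ using the conjugating sequence $(g_n)$ furnished by Lemma \ref{ZarRed}.

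Fix $(g_n)$ in $\sf G$ with $\rho_n := g_n \rho g_n^{-1} \to \eta$. Since the Jordan--Kostant--Lyapunov projection is continuous and conjugation-invariant, $\lambda_{\sf G}(\eta(\gamma)) = \lambda_{\sf G}(\rho(\gamma))$ for every $\gamma \in \G$; in particular $\sroot$-proximality is preserved on each $\cal F_{\{\sroot\}}$, and whenever $\rho(\gamma)$ is proximal on $\cal F$ so is $\eta(\gamma)$. For each $\gamma \in \G$ proximal as an isometry of $\gripe$, Proposition \ref{type} gives $\rho(\gamma)$ proximal on $\cal F$ with attracting/repelling flags $\xi(\gamma_\pm)$; continuity of the attracting and repelling flag maps on the open locus of proximal elements of $\sf G$ then yields
\begin{equation*}
\eta(\gamma)_\pm = \lim_{n\to\infty} \rho_n(\gamma)_\pm = \lim_{n\to\infty} g_n \cdot \xi(\gamma_\pm).
\end{equation*}

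Next, I would produce the positive loxodromic triple. Pick a proximal $g_0 \in \G$ and use partial positivity of $\rho$ to obtain $y_0 \in \circulo$ with $(\xi(g_{0+}), \xi(y_0), \xi(g_{0-}))$ positive. The subset $U := \{y \in \circulo : (\xi(g_{0+}), \xi(y), \xi(g_{0-})) \text{ is positive}\}$ is open in $\circulo$ by Proposition \ref{abierto} together with continuity of $\xi$. Density of attracting fixed points of proximal elements of $\G$ in $\circulo$ then allows me to find a proximal $h \in \G$ with $h_+ \in U$, so that the triple $(\xi(g_{0+}), \xi(h_+), \xi(g_{0-}))$ is positive. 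By $\sf G$-invariance of positivity, for each $n$ the triple $(\rho_n(g_0)_+, \rho_n(h)_+, \rho_n(g_0)_-) = g_n \cdot (\xi(g_{0+}), \xi(h_+), \xi(g_{0-}))$ is positive; by the first step it converges as $n\to\infty$ to $(\eta(g_0)_+, \eta(h)_+, \eta(g_0)_-)$. Since positive triples form a connected component of the open generic locus of $\cal F^3$ (Proposition \ref{abierto}), the limit is positive provided it is generic. The pair $\eta(g_0)_\pm$ is transverse by proximality; the main obstacle is securing transversality of $\eta(h)_+$ with $\eta(g_0)_\pm$, which I expect to resolve by exploiting Zariski density of $\eta(\G)$ in its reductive Zariski closure $\sf H$: the subset of the $\sf H$-orbit of $\eta(g_0)_+$ transverse to both $\eta(g_0)_\pm$ is Zariski open and nonempty, and imposing this open condition simultaneously with the open condition $h_+ \in U$ is possible by density of attracting fixed points in any open subset of $\circulo$.

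Finally, for minimality of each $\Lim{\eta(\G), \{\sroot\}}$, I would invoke Benoist's theorem. Because $\eta(\G)$ is Zariski dense in the reductive group $\sf H$, which is of non-compact type (as witnessed by the proximal element $\eta(g_0)$), Benoist's theorem recalled in the introduction applies inside $\sf H$ and asserts that the limit set of $\eta(\G)$ in every $\sf H$-partial flag variety is the unique minimal closed $\eta(\G)$-invariant subset. For each $\sroot \in \simple$, the $\sf H$-orbit of $\eta(g_0)_+^{\{\sroot\}}$ inside $\cal F_{\{\sroot\}}$ identifies with an $\sf H$-partial flag variety; the set $\Lim{\eta(\G), \{\sroot\}}$ lies inside this orbit and coincides with the corresponding $\sf H$-limit set therein. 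Transferring minimality across this identification delivers the required minimality and completes the proof.
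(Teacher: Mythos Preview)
Your overall architecture (transfer via the conjugating sequence $g_n$, replace $y_0$ by an attracting point $h_+$, pass to the limit) matches the paper's, but both of the places you flag as needing work contain genuine gaps that the paper closes differently.

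For the positive triple, the problem is exactly where you say it is, and your proposed fix does not work. The two conditions you want to impose on $h$ live in different spaces: $h_+\in U$ is an open condition in $\circulo$, while ``$\eta(h)_+$ transverse to $\eta(g_0)_\pm$'' is a condition in $\cal F$. Density of attracting points in $\circulo$ lets you meet the first, but you have no mechanism to guarantee that among the proximal $h$ with $h_+\in U$ any has $\eta(h)_+$ in the transverse locus; you have not even shown that $\eta(h)_+$ depends only on $h_+$. Zariski density of $\eta(\G)$ in $\sf H$ does not help here, because it gives no control on the boundary point $h_+$. The paper resolves this by first producing a continuous $\eta$-equivariant map $\xi_\eta:\circulo\to\cal F$ (via the argument of \cite[Prop.~4.13]{GGKW}) with $\xi_\eta(\gamma_\pm)=\eta(\gamma)_\pm$ for proximal $\gamma$; density of proximal pairs in $\circulo^{(2)}$ then makes $\xi_\eta$ transverse, so $\xi_\eta(h_+)$ is automatically transverse to $\xi_\eta(g_{0\pm})$ and the limit triple, lying in $\overline{\posF}$ and generic, is forced into $\posF$ by Proposition~\ref{abierto}.

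For minimality, your appeal to Benoist inside $\sf H$ is not justified as written. The $\sf H$-orbit of a point in the $\sf G$-flag variety $\cal F_{\{\sroot\}}$ is not in general an $\sf H$-flag variety (the stabilizer need not be parabolic in $\sf H$), the orbit need not be closed, and you have not argued that all $\{\sroot\}$-attracting points of $\eta(\G)$ lie in a single such orbit. The paper avoids this entirely: once $\xi_\eta$ exists, $\Lim{\eta(\G),\simple}=\xi_\eta(\circulo)$ is minimal because $\circulo$ is, and the partial limit sets $\Lim{\eta(\G),\{\sroot\}}$ are the projections $p_\sroot\xi_\eta(\circulo)$ (using that every $\eta(\gamma)$ is purely loxodromic, elliptic, or unipotent up to a power), hence minimal as well. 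The missing ingredient throughout your argument is precisely the construction of $\xi_\eta$.
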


\begin{proof}By continuity of the Jordan projection and Proposition \ref{typeGamma}, one has that $\eta(\g)$ is purely loxodromic for every proximal $\g\in\G,$ and that for every parabolic $\upupsilon\in\G$ there exists $k=k_\upupsilon$ such that $\eta(\upupsilon)^k$ is unipotent. 

Proposition \ref{semi-equi} gives a $\eta$-equivariant continuous transverse map $\xi_\eta:\circulo\to\cal F$ such that for every proximal $\g\in\G$ the flags $\xi_\eta(\g_+)$ and $\xi_\eta(\g_-)$ are respectively the attracting and repelling flags of $\eta(\g).$ The limit set $$\Lim{\eta(\G),\simple}=\xi_\eta\big(\circulo\big)$$ is thus minimal, and since every element of $\eta(\G)$ is either purely loxodromic, unipotent (up to a finite power) or elliptic, for every $\sroot\in\simple$ the limit set $\Lim{\eta(\G),\{\sroot\}}$ is the projection of $\Lim{\eta(\G),\simple\}}$ to $\cal F_{\{\sroot\}}$ and is thus also minimal.


In order to find a positive loxodromic triple in $\xi_\eta(\circulo),$ we observe that for every proximal $\g\in\G$ one has  $g_n\xi_\rho(\g_+)\to\xi_\eta(\g_+)$ as $n\to\infty.$ Indeed, for every $\sroot\in\simple$ the line $\mapa_\sroot\big(g_n\cdot\xi_\rho(\g)\big)$ is the eigenline of $\Fund_\sroot(g_n\rho(\g)g_n^{-1})$ associated to its spectral radius $$\lambda_1\big(\Fund_\sroot(g_n\rho(\g)g_n^{-1})\big)=\lambda_1\big(\Fund_\sroot(\rho(\g))\big)=\lambda_1\big(\Fund_\sroot(\eta(\g))\big).$$ Consequently, any accumulation point of $\big\{\mapa_\sroot\big(g_n\cdot\xi_\rho(\g)\big)\big\}$ is an eigenline associated to $\lambda_1\big(\Fund_\sroot(\eta(\g))\big);$ since $\Fund_\sroot\big(\eta(\g)\big)$ is proximal, this eigenline is $\mapa_\sroot\big(\xi_\eta(\g_+)\big).$

By assumption, there exists $x\in\circulo$ such that $\big(\xi_\rho(\g_+),\xi_\rho(x),\xi_\rho(\g_-)\big)$ is a positive triple of flags. By Proposition \ref{abierto}, $\posF$ is an open subset of $\cal F,$ thus, since attracting points of proximal elements are dense in $\circulo,$ there exists a proximal $h\in\G$ such that $\big(\xi_\rho(\g_+),\xi_\rho(h_+),\xi_\rho(\g_-)\big)$ is also a positive triple. 

We claim that $\big(\xi_\eta(\g_+),\xi_\eta(h_+),\xi_\eta(\g_-)\big)$ is a positive triple. Indeed, let us assume with out loss of generality that $\xi_\eta(\g_+)=[\Borel]$ and that $\xi_\eta(\g_-)=[\wk\Borel].$ One has the convergence $$g_n\cdot\big(\xi_\rho(\g_+),\xi_\rho(h_+),\xi_\rho(\g_-)\big)\to\big([\Borel],\xi_\eta(h_+),[\wk\Borel]\big)$$ and the triple $g_n\cdot\big(\xi_\rho(\g_+),\xi_\rho(h_+),\xi_\rho(\g_-)\big)$ is positive by definition. We may then also assume that for every $n,$ $g_n\cdot\xi_\rho(h_+)\in g_n\cdot \posF.$ The limit $\xi_\eta(h_+)$ of the sequence $g_n\cdot\xi_\rho(h_+)$ lies thus in the topological closure $\overline{\posF}.$ Proposition \ref{abierto} states that every element in the topological boundary of $\posF$ is not transverse to either $[\Borel]$ or $[\wk\Borel].$ However, as was observed earlier, $\xi_\eta(h_+)$ is both transverse to $[\Borel]$ and $[\wk\Borel]$ and thus necessarily lies in $\posF,$ the topological interior of $\overline{\posF}.$ \end{proof}

To wrap up the proof of Theorem \ref{loOtro}, we observe that the Zariski closure of $\rho(\G)$ and that of its semi-simplification $\eta(\G)$ have the same reductive part $\h$ (Lemma \ref{ZarRed}), and Proposition \ref{semi} permits to apply Theorem \ref{tA} to $\eta$, giving the desired conclusion.

\subsection{Hyperconvexity}To end this section we record the following remark that will be useful in Bridge\-man-Pozzetti-Wienhard-S. \cite{HessianHff}.

\begin{obs}Assume that $\circulo$ is homeomorphic to a circle, and that a partially positive $\rho:\G\to\sf G$ verifies the extra condition that $\xi$ sends positive ordered triples on $\circulo$ to positive triples of flags. Then for every $\sroot\in\simple$ and $x,y,z\in\circulo$ pairwise distinct one has $$\big(\mapa\xi(x)\oplus\mapa\xi(y)\big)\cap\wk\mapa_{\wedge^2\Fund_\sroot}\xi(z)=\{0\}.$$
\end{obs}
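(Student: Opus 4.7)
The plan is to interpret the stated intersection via the Plücker embedding: the $2$-plane $P=\mapa\xi(x)\oplus\mapa\xi(y)\subset V_\sroot$ corresponds to the line $[\wedge^2 P]\in\P(\wedge^2 V_\sroot)$, while $\wk\mapa_{\wedge^2\Fund_\sroot}\xi(z)$ is the hyperplane of $\wedge^2 V_\sroot$ complementary to the top weight line $V^{\peso_\sroot}\wedge V^{\peso_\sroot-\sroot}$ (transported by the $\sf G$-action to $\xi(z)$). Since $\wedge^2\Fund_\sroot$ is proximal (as recorded in the proof of Corollary \ref{grupos->coherentes}), the space $V^{\peso_\sroot-\sroot}$ is itself one-dimensional, and the claim becomes the transversality assertion $[\wedge^2 P]\not\subset\wk\mapa_{\wedge^2\Fund_\sroot}\xi(z)$.

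Next I would apply the descent procedure of Proposition \ref{descent}, exactly as in the proof of Corollary \ref{final}, to reduce to the case where $\sf G$ is simply laced and simply connected. This makes Lusztig's canonical basis $\base_{\Fund_\sroot}$ of Proposition \ref{>0pesos} available; let $\mathbf{e}_0\in\base_{\Fund_\sroot}$ denote the unique element lying in $V^{\peso_\sroot-\sroot}$.

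Because $\circulo$ is a circle and the points $x,y,z$ are pairwise distinct, up to swapping $x$ and $y$ I may assume $(x,y,z)$ is positively cyclically ordered, so by hypothesis the triple $(\xi(x),\xi(y),\xi(z))$ is positive. Using $\sf G$-equivariance I conjugate to the normal form $\xi(x)=[\Borel]$, $\xi(z)=[\wk{\Borel}]$, and $\xi(y)=g\cdot[\Borel]$ with $g\in\wk{U}_{>0}$. For any $v\in V^{\peso_\sroot}\setminus\{0\}$, Proposition \ref{>0pesos} expresses
$$\Fund_\sroot(g)v=\sum_{\mathbf{e}\in\base_{\Fund_\sroot}}c_{\mathbf{e}}\,\mathbf{e}\quad\text{with}\quad c_{\mathbf{e}}>0\text{ for every }\mathbf{e},$$
so the Plücker line $[\wedge^2 P]$, spanned by $v\wedge\Fund_\sroot(g)v=\sum_{\mathbf{e}\in\base_{\Fund_\sroot}}c_{\mathbf{e}}\,v\wedge\mathbf{e}$, will have a nonzero component $c_{\mathbf{e}_0}\,v\wedge\mathbf{e}_0$ along the top weight line $V^{\peso_\sroot}\wedge V^{\peso_\sroot-\sroot}$. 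This is what concludes the proof.

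The main delicate point is fixing the interpretation of the intersection as a Plücker transversality statement; once this is set, the calculation is a direct transposition of the positivity-of-coefficients argument already used in Corollary \ref{grupos->coherentes}, applied to the $2$-vector $v\wedge\Fund_\sroot(g)v$ rather than to a single $\Fund_\sroot(g)v$.
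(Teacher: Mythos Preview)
Your argument is correct and follows essentially the same route as the paper's proof: both reduce to the simply laced, simply connected case, normalize $\xi(x)=[\Borel]$, $\xi(z)=[\wk\Borel]$, $\xi(y)=g\cdot[\Borel]$ with $g\in\wk U_{>0}$, and then use that $\Fund_\sroot(g)v$ has all coefficients positive in Lusztig's basis $\base_{\Fund_\sroot}$ to extract a nonzero component on $V^{\peso_\sroot-\sroot}$. The only cosmetic difference is that the paper works directly with the line $P\cap V^-\subset V_\sroot$ and observes it is not contained in $\sum_{\chi\neq\peso_\sroot,\peso_\sroot-\sroot}V^\chi$, whereas you rephrase this via the Pl\"ucker embedding as $[\wedge^2 P]\notin V^-_{\wedge^2}$; the underlying computation is identical.
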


Here we interpret $\wk\mapa_{\wedge^2\Fund_\sroot}\xi(z)$ as a $\dim V_\sroot-2$-dimensional subspace of $V_\sroot.$ In the language of Pozzetti-S.-Wienhard \cite{PSW1}, the remark states that the curve $\mapa\xi(\circulo)$ is $(1,1,2)$-hyperconvex. 

\begin{proof}We can assume that $\sf G$ is simply laced and algebraically simply connected. We may also assume that $\xi(x)=[\wk\Borel],$ $\xi(z)=[\Borel]$ and that $\xi(y)=g\xi(x)$ for a $g\in \wk{U}_{>0}.$ We mimic now the proof of Corollary \ref{grupos->coherentes}. Since $\Fund_\sroot(g)$ has positive coefficients in the basis $\base_{\Fund_\sroot},$ the intersection of the plane $$P_y=\mapa\xi(x)\oplus\mapa\xi(y)=\mapa\xi(x)\oplus\Fund_\sroot(g)\mapa\xi(x)=V^+\oplus \Fund_\sroot(g)V^+$$ with $V^-,$ is not contained in any partial sum of restricted weight subspaces, in particular it is not contained in $$\sum_{\chi\in\poids(\fund_\sroot)-\{\peso_\sroot,\peso_\sroot-\sroot\}}V^\chi= \wk\mapa_{\wedge^2\Fund_\sroot}\xi(z)$$ as required.\end{proof}

\section{Group level}\label{gr}

Let us consider now a non-elementary discrete subgroup $\G<\isom(\gripe)$ of a proper Gromov-hyperbolic space $\gripe,$ a simple split $\sf G$ and the space $$\PP(\G,\sf G)=\{\rho:\G\to\sf G \textrm{ partially positive}\}.$$ In view of Proposition \ref{typeGamma}, if $\rho\in\PP(\G,\sf G)$ and $\g\in\G$ has infinite order, then the \emph{elliptic component} $m_{\rho(\g)}\in\sf M$ (as in Equations \eqref{diagonalGamma} or \eqref{descomposicion} according to the type of $\g$) is well defined. Once such $\g\in\G$ is fixed, we get a continuous map $\PP(\G,\sf G)\to\sf M$, $$\rho\mapsto m_{\rho(\g)},$$ and since $\sf M$ is finite, this map is locally constant. Its image is thus an invariant of the connected component of $\PP(\G,\sf G)$ containing $\rho.$

Let us consider $\rho\in\PP\big(\G,\SL_d(\R)\big)$ and denote by $\sf H$ the Zariski closure of $\rho(\G)$, we will use the above map to decide if $\rho(\G)$ is contained in $\sf H_0$, the identity component of $\sf H$.

\begin{lemma} The center of $\sf H$ is contained in $\{\pm\id\}$.
\end{lemma}

\begin{proof}Consider an element $z$ in the center of $\sf H$ and a proximal $\g\in\G$. The attracting line $\xi(\g_+)$ of $\rho(\g)$ is invariant by $z$, let $a\in\R$ be the eigenvalue of $z$ on $\xi(\g_+).$ The set $\{g\g_+:g\in\G\}$ is dense in $\bord X_\G$. Additionally, $g\g_+$ is the attracting line of $g\g g^{-1}$ and one sees that the eigenvalue of $z$ on $\xi(g\g_+)$ is also $a$. By Corollary \ref{reductiveClosure} $\sf H$ acts irreducibly on $\R^d$ so $\{\xi(g\g_+):g\in\G\}$ spans $\R^d$, giving that $z$ is a homothety.\end{proof}


Thus $\sf H_0$ is (conjugated to) one of the groups in Table \ref{lista} below. \begin{table}[h!]
  \begin{center}
    \begin{tabular}{l} 
   - $\SL_d(\R),$\\- a principal $\SL_2(\R),$\\ - $\Sp_{2n}(\R)$ if $d=2n$ for all $n\geq1,$\\ - $\SO_0(n,n+1)$ if $d=2n+1$ for all $n\geq1,$\\ - the fundamental representation for the short root of $\Ge_2$ if $d=7.$
    \end{tabular}
    \caption{Identity component of the Zariski closure of $\rho(\G).$} \label{lista}
  \end{center}
\end{table}

Observe that for every infinite order $\g\in\G$ the elliptic component $m_{\rho(\g)}\in\sf M\cap\sf H.$ This latter finite group is nothing but the centralizer in $\sf K_{\sf H}$ of $\exp\a_{\sf H},$ so if $m_{\rho(\g)}\in \sf H_0$ then $\rho(\g)\in\sf H_0.$

\begin{defi} 

A discrete and faithful morphism $\rho_0:\G\to\SL_d(\R)$ that factors as $$\G\to\SL_2(\R)\xrightarrow{\tau_d}\SL_d(\R),$$ where $\tau_d$ is a principal embedding, will be called \emph{principal}\footnote{This is usually referred to as \emph{Fuchsian} in the literature.}. Let us fix $\frak H$, a connected component of $\PP(\G,\SL_d(\R)\big)$ that contains a principal representation.\end{defi}

\begin{cor}\label{conexo} Assume $\G$ is torsion-free. Then for every $\rho\in\frak H$ the group $\rho(\G)$ is contained in the identity component of its Zariski closure.
\end{cor}

\begin{proof}Observe that the group  $$M:=\sf M_{\tau_d\big(\SL_2(\R)\big)}=\Big\{\tau\big(\begin{smallmatrix}-1 & 0 \\ 0 & -1\end{smallmatrix}\big),\tau\big(\begin{smallmatrix}1 & 0 \\ 0 & 1\end{smallmatrix}\big)\Big\}$$ is contained in all groups in Table \ref{lista}. If $\rho\in\frak H$ has Zariski closure $\sf H,$ then for every $\g\in\G-\{\id\}$ one has $m_{\rho(\g)}\in M\subset \sf H_0$ and thus $\rho(\g)\subset\sf H_0$. \end{proof}

Finally, let $S$ be a closed connected orientable surface of genus $\geq2$ and let $\rho:\pi_1S\to\PSL_d(\R)$ belong to a Hitchin component. Assume first that $\rho$ lifts to a representation $\widetilde\rho:\pi_1S\to\SL_d(\R).$ Then Theorem \ref{HP} assures that $\widetilde\rho\in\frak H(\pi_1S,\SL_d(\R))$ and Corollary \ref{conexo} implies that the Zariski closure of $\rho$ is the projectivisation of a group in Table \ref{lista}.  The  following lemma completes thus the proof of  Guichard's classification (Corollary \ref{gui}).

\begin{lemma} Every Hitchin representation $\rho:\pi_1S\to\PSL_d(\R)$ lifts to a representation with values in $\SL_d(\R)$.
\end{lemma}

\begin{proof}Culler's Theorem 4.1 in \cite{Culler} implies that every Hitchin representation lifts provided one of them does. Additionally, if $\eta:\pi_1S\to\PSL_2(\R)$ is discrete and faithful, then $\eta(\pi_1S)$ is in particular torsion-free so \cite[Corollary 2.3]{Culler} implies that $\eta$ lifts to a representation in $\SL_2(\R)$, giving the desired lemma.\end{proof}



\begin{obs} \item \begin{itemize}\item The case of Hitchin representations with values in $\SO_0(n,n)$ has been treated by Carvajales-Dai-Pozzetti-Wienhard \cite[Corollary 7.10]{CDPW}. \item The above argument for Hitchin representations in $\SL_d(\R)$ also applies to the cusped Hitchin representations studied by Canary-Zhang-Zimmer \cite{CuspedCZZ}.\end{itemize}
\end{obs}


\appendix

\section{The Hasse diagrams for extremal roots}\label{figurasdiagramas}

In this appendix we prove Lemma \ref{diagramasClasif}. To this end we compute the Hasse diagrams for the extremal roots of irreducible reduced root systems and compute, in a case by case manner, the existence/non-existence of surjective level preserving maps between them. Let us simplify notation and denote, for a simple root $x\in\simple_{\sf J}$ of some root system $\sf J,$ by $\hasse{\sf J}x$ the Hasse diagram $\hasse{\sf J}{\peso_x}$ for the fundamental weight $\peso_x.$

Most of the situations are ruled out by the following simple facts. If $f:\simple_{\sf L}\to\simple_{\sf J}$ is surjective and $\T^f:\hasse{\sf L}{\aa}\to\hasse{\sf J}{f(\aa)}$ is a surjective diagram map with labeling $f$ then:\begin{itemize}\item[-]$\rk\sf J\leq\rk\sf L,$\item[-]both $\hasse{\sf L}{\aa}$ and $\hasse{\sf J}{f(\aa)}$ have the same total amount of levels,\item[-] if $\chi$ is the only vertex at a given level, then the number of arrows pointing downwards in $\hasse{\sf L}{\aa}$ is greater than that of $\T^f(\chi)$ in $\hasse{\sf J}{f(\aa)},$ \item[-] to show non-existence of such $f,$ it it sufficient to find one extremal root of $\sf L$ whose Hasse diagram does not surject to any diagram of $\sf J$ (for extremal roots).\end{itemize}

We refer the reader to the corresponding figures for the labeling of simple roots for each Dynkin diagram.

\begin{lemma}Leaving aside the case $f=$identity, one has the following.\begin{itemize}\item[- Type $\A:$] The only surjective diagram map $\T^f:\hasse{\A_d}{\bb_1}\to\hasse{\sf J}{x}$ with $x$ extremal are \begin{itemize}\item[-]$d=2n$ and $\sf J=\B_n$ and $x=\bb$ for all $n$ and moreover $\Ge_2$ and $x=\aa$ if $d=6,$\item[-] $d=2n-1,$ $\sf J=\Ce_n$ and $x=\bb.$ \end{itemize}\item[- Type $\B:$]The only surjective diagram maps $\T^f:\hasse{\B_n}{\bb}\to\hasse{\sf J}{x}$ with $x$ extremal is $n=3$ and $\sf J=\Ge_2$ and $x=\aa.$\item[- Type $\Ce:$]There is no surjective diagram map $\T^f:\hasse{\Ce_n}{\bb}\to\hasse{\sf J}{x}$ with $x$ extremal.\item[- Type $\D:$] The only surjective diagram maps $\T^f:\hasse{\D_n}{\bb}\to\hasse{\sf J}{x}$ with $x$ extremal are \begin{itemize}\item[-] $\sf J=\B_{n-1}$ with $x=\bb$ for all $n,$\item[-] moreover one has $\sf J=\B_3$ with $x=\aa$ and $\sf J=\Ge_2$ with $x=\aa$ if $n=4.$\end{itemize}\end{itemize}
\end{lemma}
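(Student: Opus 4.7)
The plan is to proceed case by case on the source system $\sf L$ together with its chosen extremal root $\aa\in\simple_{\sf L}$. For each pair $(\sf L,\aa)$ I would first draw $\hasse{\sf L}{\peso_\aa}$ level by level using Remark \ref{generador}, then enumerate the candidate targets $(\sf J,x)$ with $x\in\simple_{\sf J}$ extremal, $\rk \sf J\leq\rk \sf L$ and the same total number of levels, and finally either exhibit a surjective diagram map with the required labeling $f$, or rule it out using the three coarse constraints recorded above (rank, total level count, and downward-edge count at levels containing a single vertex).

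For type $\A$ with source extremal root $\bb_1$, the diagram $\hasse{\A_d}{\peso_{\bb_1}}$ is a linear chain of $d+1$ vertices, one per level, with successive edges labeled by the simple roots in order. Since every level contains a unique vertex, the third coarse constraint forces the target to be a chain with a single vertex at each level and of the same length $d+1$. A census of extremal-root diagrams of simple types shows that the only such chains are $\hasse{\B_n}{\peso_{\bb_1}}$ of length $2n+1$, $\hasse{\Ce_n}{\peso_\bb}$ (for the short extremal) of length $2n$, and $\hasse{\Ge_2}{\peso_\aa}$ of length $7$. Matching lengths yields precisely the three cases in the statement, and the labeling $f$ is read off from the associated folding of the source Dynkin diagram. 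An essentially identical argument handles type $\B$ with source $\peso_\bb$ being the standard $(2n+1)$-chain of $\so(n,n+1)$, leaving only $n=3$, $\sf J=\Ge_2$, $x=\aa$. For type $\Ce$ with source taken to be the short extremal the source is itself a chain of length $2n$; a direct check of the list above shows that no other extremal chain of this length exists with rank $\leq n$, so no non-identity map is possible. For the long extremal the diagram widens, and comparing level widths with those of extremal-root diagrams of smaller or equal rank rules out every non-identity target.

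For type $\D$ with source $\peso_\bb$, the diagram $\hasse{\D_n}{\peso_\bb}$ has $2n$ levels, is a chain at the top and bottom and widens to width two at two middle levels, matching the weight structure of the standard representation of $\so(n,n)$. Combining the single-vertex-per-level constraint at the extremities with the matching of level counts leaves $\B_{n-1}$ as the only target in general, with the labeling coming from the standard folding $\D_n\to\B_{n-1}$. When $n=4$ the outer triality of $\D_4$ produces two additional families of foldings, giving the sporadic targets $\B_3$ with $x=\aa$ and $\Ge_2$ with $x=\aa$; both surjections are drawn explicitly in Figure (\ref{d4b3}), and the $\Ge_2$ case proceeds from the quotient of $\simple_{\D_4}$ by the full triality.

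The main obstacle is the $\D_4$ case: triality produces several a priori candidate foldings of $\D_4$, and one must verify that the two resulting foldings onto $\B_3$ are genuinely distinct as diagram maps (they correspond to non-$\Int\so(4,4)$-conjugate embeddings of $\so(3,4)$, as recorded in the remark following Lemma \ref{diagramasClasif}), while the folding onto $\Ge_2$ requires an explicit level-by-level check of edge-label compatibility. Once these verifications are in place, inspection of the diagrams compiled in Appendix \ref{figurasdiagramas} closes all remaining cases and completes the lemma.
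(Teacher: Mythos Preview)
Your approach is essentially the same as the paper's: both arguments observe that the source diagrams $\hasse{\A_d}{\bb_1}$, $\hasse{\B_n}{\bb}$, $\hasse{\Ce_n}{\bb}$ are linear chains (and $\hasse{\D_n}{\bb}$ is a chain except for one diamond), and then combine the rank bound $\rk\sf J\leq\rk\sf L$ with the total level count and the single-vertex arrow constraint to pin down the target. Two small corrections: $\hasse{\D_n}{\bb}$ has $2n-1$ levels (not $2n$) and widens to width two at a single middle level (not two); and the discussion of distinguishing the two $\B_3$-foldings of $\D_4$ is not part of this lemma, which only lists the possible target pairs $(\sf J,x)$ rather than counting maps to each.
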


\begin{proof}Observe that all Hasse diagrams $\hasse{\A_n}{\bb_1}$, $\hasse{\B_n}{\bb}$ (Figure (\ref{HasseAn})) and $\hasse{\Ce_n}{\bb}$ (Figure (\ref{HasseCn})) consist on exactly one arrow exiting each vertex. By restricting the total amount of levels given by the existence of $\T^f$ together with the fact that $\rk\sf J\leq n$ (in each case) one completes the proof. A similar argument works for $\hasse{\D_n}{\bb_1}$ (see also Figure (\ref{d4b3})). \end{proof}

We now treat the type $\EE$ family, we will show that there is no surjective diagram map from $\hasse{\EE_k}{\aa}$ for $k=6,$ $7$ or $8$ to any other Hasse diagram $\hasse{\sf J}{x}$ with extremal $x$, except for $\hasse{\EE_6}{\aa}\to\hasse{\F_4}{\aa}$ (as shown in Figure (\ref{E6-F4})).

\begin{lemma} There is no surjective map $\T^f$ from $\hasse{\EE_k}{\aa}$ for $k=6,$ $7$ or $8$ onto any of $\hasse{\A_n}{\bb}\approx\hasse{\A_n}{\aa}, $ $\hasse{\B_n}{\bb},$ $\hasse{\Ce_n}{\bb},$ for $n\leq8$ nor onto $\hasse{\Ge_2}{\bb}$ or $\hasse{\Ge_2}{\aa}.$
\end{lemma}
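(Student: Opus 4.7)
The plan is to imitate the preceding lemma, relying on the two numerical obstructions recorded at the start of this appendix: a surjective diagram map $\T^f:\hasse{\sf L}{\aa}\to\hasse{\sf J}{f(\aa)}$ must be level preserving, and at every level the source must carry at least as many vertices as the target. First I would tabulate the total number of levels of every potential target: $n+1$ for $\hasse{\A_n}{\bb},$ $2n+1$ for $\hasse{\B_n}{\bb},$ $2n$ for $\hasse{\Ce_n}{\bb},$ $7$ for $\hasse{\Ge_2}{\aa}$ and $11$ for $\hasse{\Ge_2}{\bb}.$ Under the restriction $n\leq 8$ the largest value is $17,$ attained only by $\hasse{\B_8}{\bb}.$

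Next I would record, for each extremal $\aa\in\simple_{\EE_k}$ with $k=6,7,8,$ the number of levels of $\hasse{\EE_k}{\aa}$ (i.e.\ $\het(\peso_\aa-w_0\peso_\aa)+1$), a quantity readable from any standard table of fundamental representations. This yields $17$ levels for the $27$-dimensional minuscule representation of $\EE_6$ (associated to $\aa_1$ and, by diagram symmetry, to $\aa_6$); $23$ for the $78$-dimensional adjoint of $\EE_6$ (associated to $\aa_2$); $28$ for the $56$-dimensional minuscule of $\EE_7;$ $35$ for the adjoint of $\EE_7;$ and at least $59$ for every extremal fundamental of $\EE_8$ (the adjoint has exactly $59$ levels). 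All of these exceed $17$ except $\hasse{\EE_6}{\aa_1},$ so every other pair is eliminated by level mismatch alone.

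The only potentially surviving case is $\hasse{\EE_6}{\aa_1}\to\hasse{\B_8}{\bb},$ and the plan for handling it is a direct comparison of the top levels. Since the $27$-dimensional representation of $\EE_6$ is minuscule, its Hasse diagram is determined quickly from Proposition \ref{cuerda}: every weight has multiplicity one, and a short sequence of inner-product computations shows that the top four levels form a linear chain $\peso_{\aa_1}\to\peso_{\aa_1}-\aa_1\to\peso_{\aa_1}-\aa_1-\aa_3\to\peso_{\aa_1}-\aa_1-\aa_3-\aa_4;$ that two arrows, labelled $\aa_2$ and $\aa_5,$ descend from the unique level-$4$ vertex; and that from the level-$5$ vertex $\peso_{\aa_1}-\aa_1-\aa_3-\aa_4-\aa_5$ two further arrows descend, labelled $\aa_2$ and $\aa_6.$ The target $\hasse{\B_8}{\bb}$ is a linear chain with palindromic label sequence $\bb_1,\bb_2,\ldots,\bb_8,\bb_8,\ldots,\bb_2,\bb_1,$ and level preservation forces $f(\aa_1)=\bb_1,\,f(\aa_3)=\bb_2,\,f(\aa_4)=\bb_3.$ The $4\to 5$ transition then forces $f(\aa_2)=f(\aa_5)=\bb_4,$ whereas the $5\to 6$ transition forces $f(\aa_5)=\bb_5,$ contradicting $\bb_4\neq\bb_5.$

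The main obstacle is precisely this last step: although minusculity makes the necessary calculations short, book-keeping errors are easy around the branching of the $\EE_6$ Dynkin diagram. I would therefore supplement the written argument with a picture of $\hasse{\EE_6}{\aa_1}$ labelled as in Figures \ref{g2sl} and \ref{d4b3}, so that the crucial local structure of the top six levels can be checked by inspection.
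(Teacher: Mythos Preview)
Your argument is correct, but it takes a slightly different path from the paper and misses a shortcut that the paper exploits.

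The paper does not try to match level counts against all targets with $n\leq 8$. It first invokes the very first ``simple fact'' listed at the top of the appendix: surjectivity of $\T^f$ forces surjectivity of $f:\simple_{\EE_k}\to\simple_{\sf J}$, hence $n=\rk\sf J\leq\rk\EE_k=k$. With this in hand the paper then observes that the targets $\hasse{\A_n}{\bb},\hasse{\B_n}{\bb},\hasse{\Ce_n}{\bb}$ are linear chains, so the branching visible in $\hasse{\EE_k}{\aa}$ (the rhombus after the initial chain) forces several simple roots of $\EE_k$ to collapse under $f$; this pushes the bound down to $n\leq k-4\leq 4$, and then a crude level comparison finishes.

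Your route---tabulating all level counts and leaving $\hasse{\EE_6}{\aa_1}\to\hasse{\B_8}{\bb}$ as the sole survivor---works, but you then spend a local labeling computation on a case that is vacuous: $f:\simple_{\EE_6}\to\simple_{\B_8}$ cannot be surjective simply because $6<8$. Your labeling argument is nonetheless valid (with one cosmetic slip: the constraint $f(\aa_5)=\bb_5$ you cite at the $5\to6$ transition comes from the \emph{other} level-$5$ vertex $\peso_{\aa_1}-\aa_1-\aa_3-\aa_4-\aa_2$, not the one you describe; the one you describe gives $f(\aa_2)=\bb_5$, which yields the same contradiction).

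Two further remarks. First, the lemma concerns only the single extremal root the paper calls $\aa$ (node $k$ in Bourbaki numbering), so your survey of the adjoint and other extremal fundamental representations is unnecessary. Second, your level count of $11$ for $\hasse{\Ge_2}{\bb}$ is right; the paper's proof actually states $14$ there, which appears to be a slip (confusing the dimension of the adjoint with its number of levels), though the inequality it needs still holds.
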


\begin{proof} The non-existence of such map comes from the fact that $\hasse{\EE}{\aa}$ has too many levels (compared to the fact that $n$ must be smaller than $k$), observe that Figure (\ref{HasseE}) depicts $\hasse{\EE_k}{\aa}$ up to levels $9,$ $10$ and $11$ respectively for $k=6,$ $7$ or $8.$ The case $\hasse{\Ge_2}{\aa}$ is readily discarded since it has 7 levels.

We now treat $\hasse{\sf J}{x}$ for $\sf J=\A_n,$ $\B_n,$ $\Ce_n$ and $x=\bb.$ Since these diagrams consist on only one arrow pointing downwards at each level, from Figure (\ref{HasseE}) one sees that if such a $\T^f$ existed then necessarily $$f(\bb_2)=f(\sroot)=f(\bb)=f(\bb_3)=f(\bb_4).$$ Since $f$ is surjective, the above equalities imply that $\sf J$ has rank $\leq k-4,$  that is $n\leq k-4\leq4.$ However $\hasse{\A_4}{\bb}$ has 5 levels, $\hasse{\B_4}{\bb}$ has 9 levels and $\hasse{\Ce_4}{\bb}$ has $8$ levels, but $\hasse{\EE}{\aa}$ has at least 9 levels (actually at least 17 	as seen in Figure (\ref{E6-F4})).

Finally, from Figure (\ref{HasseG}) one sees that $\hasse{\Ge_2}{\bb}$ has 14 levels but Figure (\ref{E6-F4}) shows that $\hasse{\EE}{\aa}$ has at least $17$ levels.\end{proof}

\begin{lemma} \item\begin{itemize}\item[-] There is no surjective map $\T^f$ from $\hasse{\EE_k}{\aa}$ $k=6,$ $7$ or $8$ onto $\hasse{\B_n}{\aa},$ $\hasse{\Ce_n}{\aa},$ $\hasse{\D_n}{\aa}\approx\hasse{\D_n}{\sroot},$ $\hasse{\EE_j}{\sroot}$ ($j=6,$ $7$ or $8$), $\hasse{\EE_{k-1}}{\aa},$ (if $k=7$ or $8$) $\hasse{\EE_{k-2}}{\aa}$ (if $k=8$).
\item[-] There is no surjective map $\T^f$ from $\hasse{\EE_7}{\aa}$ or $\hasse{\EE_8}{\aa}$ onto $\hasse{\EE_6}{\aa},$ $\hasse{\EE_j}{\bb}$ ($j=6,$ $7$ or $8$), $\hasse{\F_4}{\bb}$ and $\hasse{\F_4}{\aa}.$\end{itemize}\end{lemma}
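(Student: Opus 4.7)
The plan is to continue the case-by-case verification used in the previous lemma, resting on the three elementary obstructions to the existence of a surjective diagram map $\T^f:\hasse{\sf L}{\chi}\to\hasse{\sf J}{\chi'}$ with labeling $f:\simple_{\sf L}\to\simple_{\sf J}$: (i) the two diagrams must have the same total number of levels; (ii) at every level the number of weights of the target is bounded above by that of the source; and (iii) whenever $\T^f$ is forced by level-preservation and $f$-labeling to identify two specific vertices, the set of downward-arrow labels at the source vertex must cover, under $f$, the corresponding set at the target. As a first step I would tabulate total level counts and level-by-level multiplicities for every diagram appearing in the statement, reading them off the figures in this appendix.

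For the targets $\hasse{\B_n}{\aa},\hasse{\Ce_n}{\aa},\hasse{\D_n}{\aa}\approx\hasse{\D_n}{\sroot}$ the relevant fundamental weight yields a diagram whose total level count grows linearly in $n$ in a manner directly readable from the figures. Surjectivity of $f:\simple_{\EE_k}\to\simple_{\sf J}$ forces $n\leq k\leq 8$, leaving finitely many pairs $(k,n)$; for each of them a numerical comparison with the total level count of $\hasse{\EE_k}{\aa}$ triggers obstruction (i). The $\F_4$-targets $\hasse{\F_4}{\bb}$ and $\hasse{\F_4}{\aa}$, considered only against $\hasse{\EE_7}{\aa}$ and $\hasse{\EE_8}{\aa}$, fall to the same single-table level-count argument.

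The intra-$\EE$ cases are the subtle ones. The only non-trivial diagram automorphism of the $\EE$ family is the $\EE_6$ folding, which fixes the short-branch extremal root and exchanges the two long-leg ones; consequently, for inequivalent choices of extremal root within the same $\EE_k$ the fundamental-weight Hasse diagrams have distinct total level counts, which discards all targets of the form $\hasse{\EE_j}{\sroot}$ and $\hasse{\EE_j}{\bb}$ by obstruction (i). For the remaining targets $\hasse{\EE_{k-1}}{\aa},\hasse{\EE_{k-2}}{\aa},\hasse{\EE_6}{\aa}$ at the same extremal root $\aa$ with strictly smaller index, the total level count of $\hasse{\EE_k}{\aa}$ is a strictly increasing function of $k$, so obstruction (i) again applies.

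The hard part, and the bulk of the verification, is the explicit computation of total level counts and multiplicity profiles for the $\EE$-type diagrams themselves; this is a large but finite check best displayed through the figures. Should any particular pair resist obstruction (i), I would fall back to (ii)--(iii), locating the first level at which source and target multiplicity profiles diverge: since the top of any Hasse diagram has multiplicity one at each level, a discrepancy can only arise sufficiently deep in the diagram, and that is where the detailed computation would be concentrated.
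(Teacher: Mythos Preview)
Your plan would eventually succeed via the fallback, but its primary assertion is false and it is far more laborious than the paper's argument. You claim that for the classical targets $\hasse{\B_n}{\aa}$, $\hasse{\Ce_n}{\aa}$, $\hasse{\D_n}{\aa}$ with $n\leq k$ a comparison of total level counts against $\hasse{\EE_k}{\aa}$ always triggers obstruction (i). It does not: $\hasse{\EE_6}{\aa}$ has exactly $17$ levels (Figure~\ref{E6-F4}), and so does $\hasse{\Ce_4}{\aa}$, since for $\Ce_4$ one has $2\peso_\aa = 2\bb+4\bb_2+6\bb_3+4\aa$, of height $16$. So at least this pair forces you into the fallback, and you have not checked that no further coincidences occur; as written the proposal is a programme rather than a proof.

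The paper's argument bypasses all level-count bookkeeping and uses a single uniform observation, which is really a sharp instance of your obstruction (iii): compare the \emph{first} level at which a vertex has two outgoing edges. In $\hasse{\EE_k}{\aa}$ the initial chain $\aa,\bb_{k-2},\ldots,\bb_3$ postpones the first branching to level $k-2\geq4$ (hence $\geq5$ for $k=7,8$), whereas every target in the first bullet already branches at level $3$ (or, for the targets $\hasse{\EE_{k-1}}{\aa},\hasse{\EE_{k-2}}{\aa}$, at level $k-3<k-2$), and every target in the second bullet branches at level $4$. At any level with a unique vertex on each side, the single source edge cannot cover two target edges under $f$, so surjectivity fails. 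No total level counts, no multiplicity tables---just one glance at the top of each diagram.
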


\begin{proof} In $\hasse{\EE_k}{\aa}$ the first level with more than one exiting arrow is at least 4, however the diagrams appearing in the first item have $2$ exiting arrows at the third level. Similarly the first level with more than one exiting arrow in $\hasse{\EE_7}{\aa}$ or $\hasse{\EE_8}{\aa}$ is at least $5,$ but the diagrams listed in the second item have earlier multiple exiting arrows.\end{proof}

The $\EE$ family is thus achieved with the next Lemma.

\begin{lemma}There is no surjective map $\T^f$ from $\hasse{\EE_k}{\aa}$ for $k\in\{6,7,8\}$ onto $\hasse{\D_n}{\bb}.$
\end{lemma}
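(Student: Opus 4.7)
The plan is to reduce the lemma to a simple level-counting argument. The key point is that any surjective diagram map $\T^f:\hasse{\EE_k}{\aa}\to\hasse{\D_n}{\bb}$ is level-preserving and surjective on vertices, so the two Hasse diagrams must have exactly the same total number of levels; moreover, since the underlying labeling $f:\simple_{\EE_k}\to\simple_{\D_n}$ is necessarily surjective, one has $n\leq k$. Writing $L_k$ for the total number of levels of $\hasse{\EE_k}{\aa}$, the argument thus reduces to showing $L_k>2k-1$ for each $k\in\{6,7,8\}$, because (as one reads off Figure \ref{HasseDn}) the diagram $\hasse{\D_n}{\bb}$ has exactly $2n-1$ levels: an initial chain of $n-2$ arrows with labels $\bb,\bb_2,\ldots,\bb_{n-2}$, one branching level, and a symmetric returning chain of $n-2$ arrows down to the lowest weight.

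To compute $L_k$ I would identify in each case the irreducible representation of $\EE_k$ with highest weight $\peso_\aa$. For $k=6$ this is the $27$-dimensional minuscule representation, and $L_6=17$ is directly read off Figure \ref{E6-F4}. For $k=7$, $\peso_\aa$ corresponds to the $56$-dimensional minuscule representation; since $-\id$ lies in the Weyl group $\Weyl$ of $\EE_7$, the lowest weight is $-\peso_\aa$, and a direct computation via the inverse Cartan matrix gives that $2\peso_\aa$ has coefficient sum $27$ in the simple roots, whence $L_7=28$. For $k=8$, $\peso_\aa$ is the highest root of $\EE_8$, so the representation is the adjoint one; its weights range from level $1$ (the highest root) to level $1+2(h-1)=59$, where $h=30$ is the Coxeter number of $\EE_8$.

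Assembling these facts, $L_6=17>11$, $L_7=28>13$, and $L_8=59>15$, each of which strictly exceeds $2k-1\geq 2n-1$, contradicting the required equality $L_k=2n-1$. The main obstacle is really just the bookkeeping: correctly identifying the representation associated to $\peso_\aa$ in each exceptional case and carrying out the level-span computation. Once these numerical values of $L_k$ are in hand, the level-counting principle applied above finishes the proof immediately.
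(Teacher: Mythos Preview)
Your argument is correct and is actually more direct than the paper's own proof. You use a pure level-count: the diagram $\hasse{\D_n}{\bb}$ has exactly $2n-1$ levels, the surjectivity of $f$ forces $n\leq k$, and then the explicit values $L_6=17$, $L_7=28$, $L_8=59$ (which you correctly identify via the $27$-, $56$-, and $248$-dimensional representations, respectively) all exceed $2k-1$. This uses only the first two of the ``simple facts'' listed at the beginning of \S\ref{figurasdiagramas}.

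The paper instead argues structurally. It first matches the position of the first branching: in $\hasse{\D_n}{\bb}$ the chain runs for $n-2$ steps before the rhombus, while in $\hasse{\EE_k}{\aa}$ it runs for $k-3$ steps; hence $n=k-1$. It then observes that after the rhombus the $\EE_k$ diagram branches again (through $\bb,\sroot,\bb_3$), whereas $\hasse{\D_n}{\bb}$ is a single chain there, forcing several simple roots of $\EE_k$ to be identified under $f$ and giving $n\leq k-2$, a contradiction. Your approach avoids this second structural step entirely at the cost of computing $L_7$ and $L_8$ (which the paper never needs explicitly); the paper's approach stays within the portion of the diagrams already drawn in the figures. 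Both are valid, and yours is arguably cleaner.
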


\begin{proof}Since in $\hasse{\D_n}{\bb}$ there is only one arrow starting at each node for every level up to $n-2,$ if such a $\T^f$ exists then one must have $n-2=k-3.$ However, by looking at the levels after the first rhombus in Figure (\ref{HasseE}) one sees that $$f(\bb)=f(\sroot)=(\bb_3),$$ thus $n\leq k-2,$ which is a contradiction with $n=k-1.$\end{proof}

We now deal with  $\F_4$ and $\Ge_2$.

\begin{lemma}\item \begin{itemize}\item[-]Let $x$ be an extremal root of $\F_4$. Then, other than $f=\id$, there is no surjective map $\T^f$ from $\hasse{\F_4}{\aa}$ to any other Hasse diagram $\hasse{\sf J}{z}$ for extremal $z$.\item[-]Let $x$ be a root of $\G_2$. There, other than $f=\id$, there is no surjective map $\T^f$ from $\hasse{\Ge_4}{x}$ to any other Hasse diagram $\hasse{\sf J}{z}$ for extremal $z$.
\end{itemize}
\end{lemma}

\begin{proof} Follows easily since the other reduced root systems with $\rk \sf J\leq4$ and $\leq2$ respectively do not have enough levels.
\end{proof}

\begin{figure}
\begin{tikzpicture}[baseline= (a).base]

\node[scale=.8] (a) at (-3,0){
\begin{tikzcd}[column sep=small]
 & \circ \arrow[d, "\bb_1"]& & \\ 
  & \circ \arrow[d, "\bb_2"]& & \\
 & \vdots \arrow[d, "\bb_{n-1}"]& & \\
 & \circ\arrow[d, "\bb_n"]&  & \\
 & \circ  & &
\end{tikzcd}};
\node[scale=.8] at (-3,-4){$\hasse{\A_n}{\peso_{\bb_1}}$ };

\node[scale=.8] (a) at (-1.5,0){
\begin{tikzcd}[column sep=small]
 & \circ \arrow[d, "\bb_n"]& & \\ 
  & \circ \arrow[d, "\bb_{n-1}"]& & \\
 & \vdots\arrow[d, "\bb_{2}"]& & \\
 & \circ\arrow[d, "\bb_1"]&  & \\
 & \circ  & &
\end{tikzcd}};
\node[scale=.8] at (-1.5,-4){$\hasse{\A_n}{\peso_{\bb_n}}$ };

\node[scale=.8] at (-2.5,-5){\dynkin[labels={\bb_1,\bb_2,,\bb_n},scale=1.4] A{}};

\node[scale=.8] (a) at (0,0){
\begin{tikzcd}[column sep=small]
 & \circ \arrow[d, "\bb"]& & \\ 
  & \circ \arrow[d, "\bb_2"]& & \\
 & \vdots\arrow[d, "\bb_{n-1}"]&  & \\
 & \circ \arrow[d, "\aa"]& & \\ 
  & \circ \arrow[d, "\aa"]& & \\
   & \circ\arrow[d, "\bb_{n-1}"]&  & \\
 & \vdots\arrow[d, "\bb"]&  & \\
 & \circ  & &
\end{tikzcd}};
\node[scale=.8] at (0,-4){$\hasse{\B_n}{\peso_{\bb}}$};

\node[scale=.8] at (4,0){\begin{tikzcd}[column sep=small,]
 && \circ \arrow[d,   "\aa"]& && &&\\ 
 && \circ \arrow[d,   "\bb_{n-1}"]&  &&&& \\
 && \circ \arrow[dl,  swap, "\aa"] \arrow[dr,  "\bb_{n-2}"]  & &&&&\\
&  \circ  \arrow[dr,  "\bb_{n-2}"] &  & \arrow[dl,  "\aa"] \circ  \arrow[dr,  "\bb_{n-3}"] &&& &\\
 && \circ \arrow[dl,  swap, "\bb_{n-1}"]  \arrow[dr,  "\bb_{n-3}"] &&\circ \arrow[dl,  "\aa"]\arrow[dr,  "\bb_{n-4}"] &&&\\
 &\circ\arrow[d,swap,"\aa"]\arrow[dr, "\bb_{n-3}"]&  & \circ\arrow[dl, "\bb_{n-1}"]\arrow[dr, "\bb_{n-4}"]&    &\circ\arrow[dl,"\aa"]\arrow[dr,"\bb_{n-5}"]&&\\
&\circ &\circ\arrow[d,swap,"\aa" description] \arrow[dr, "\bb_{n-4}"]\arrow[dl, near end, swap, "\bb_{n-2}"]& & \circ\arrow[dl, "\bb_{n-1}"]\arrow[dr, "\bb_{n-5}"]&   &\circ\arrow[dl,"\aa"]\arrow[dr,"\bb_{n-6}"]\\
&\circ& \circ\arrow[ul,near end, crossing over,"\bb_{n-3}" description]&\circ & &\circ&&\circ
\end{tikzcd}};
\node[scale=.8] at (4,-4){$\hasse{\B_n}{\peso_{\aa}}$ up to level 8};
\node[scale=.8] at (2,-5){\dynkin[labels={\bb,\bb_2,,\bb_{n-1},\aa},scale=1.4] B{}};

%
%
%
%
%

\end{tikzpicture}\caption{Hasse for extremal roots of $\A_n$ (left) and $\B_n$ (right)}\label{HasseBn}\label{HasseAn}
\end{figure}

%
%
%
%
%

\begin{figure}
\begin{tikzpicture}[baseline= (a).base]

\node[scale=.8] (a) at (4,0.6){
\begin{tikzcd}[column sep=small]
 & \circ \arrow[d, dash, "\bb"]& & \\ 
 & \circ\arrow[d, dash, "\bb_2"]&  & \\
 &\vdots\arrow[d,dash,"\bb_{n-2}"]&&\\
 & \circ \arrow[dl, dash,swap,"\sroot"] \arrow[dr, dash,"\aa"]  & &\\
  \circ  \arrow[dr, dash,"\aa"] &  & \arrow[dl, dash,"\sroot"] \circ  & \\
  & \circ  \arrow[d, dash,"\bb_{n-2}"]&   &   \\
  & \vdots\arrow[d, dash,"\bb"]& & \\
 & \circ &  &
\end{tikzcd}};
\node[scale=.8] at (4,-3.5){$\hasse{\D_n}{\peso_{\bb}}$};

\node[scale=.8] at (7,0.6){
\begin{tikzcd}[column sep=small]
 & \circ \arrow[d, dash,"\aa"]& & \\ 
  & \circ \arrow[d, dash,"\bb_{n-2}"]& & \\
 & \circ\arrow[d, dash,swap, "\bb_{n-3}"]\arrow[dr,dash,"\sroot"]& & \\
 & \circ\arrow[d, dash,swap, "\bb_{n-4}"]\arrow[dr,dash,"\sroot"]&\circ \arrow[d,dash,"\bb_{n-3}"] & \\ 
 & \circ\arrow[d, dash,swap, "\bb_{n-5}"]\arrow[dr,dash,"\sroot"]&\circ \arrow[d,dash,"\bb_{n-4}"] & \\ 
 & \circ\arrow[d, no head,dashed]&  \circ\arrow[d,no head, dashed]& \\
 & \circ\arrow[dr,dash,"\sroot"]&\circ \arrow[d,dash,"\bb"] & \\
 & &  \circ&
\end{tikzcd}};
\node[scale=.8] at (7,-3.5){$\hasse{\D_n}{\peso_{\aa}}\approx\hasse{\D_n}{\peso_{\sroot}}$};

\node[scale=.8] at (5,-4.5){\dynkin[labels={\bb,\bb_2,,,\sroot,\aa},scale=1.4] D{}};

\node[scale=.8] (a) at (-3,0){
\begin{tikzcd}[column sep=small]
  & \circ \arrow[d, "\bb"]& & \\ 
  & \circ \arrow[d, "\bb_2"]& & \\
 & \vdots\arrow[d, "\bb_{n-1}"]&  & \\
 & \circ \arrow[d, "\aa"]& & \\ 
  & \circ \arrow[d, "\bb_{n-1}"]& & \\
 & \vdots\arrow[d, "\bb"]&  & \\
 & \circ  & &
\end{tikzcd}};
\node[scale=.8] at (-3,-3.5){$\hasse{\Ce_n}{\peso_{\bb}}$};

\node[scale=.8] at (0,0){\begin{tikzcd}[column sep=small,]
& & \circ \arrow[d,   "\aa"]& & \\ 
& & \circ \arrow[d,   "\bb_{n-1}"]&  & \\
& & \circ \arrow[dl,  swap, "\bb_{n-1}"] \arrow[dr,   "\bb_{n-2}"]  & &\\
&  \circ  \arrow[dr,  "\bb_{n-2}"]\arrow[dl, swap,"\aa"] &  & \arrow[dl,  "\bb_{n-1}"] \circ  \arrow[dr,  "\bb_{n-3}"] & \\
\circ \arrow[dr, "\bb_{n-2}"] & &\circ\arrow[dl, "\aa"]\arrow[dr, "\bb_{n-3}"]  &  & \circ\arrow[dl,"\bb_{n-1}"]   &\\
&\circ   & & \circ    &  &\end{tikzcd}};
\node[scale=.8] at (0,-3.5){$\hasse{\Ce_n}{\peso_{\aa}}$ up to level 6};

\node[scale=.8] at (-1,-4.5){\dynkin[labels={\bb,\bb_2,,\bb_{n-1},\aa}, scale=1.4] C{}};

%

\end{tikzpicture}\caption{Hasse for extremal roots of $\Ce_n$ (left) and $\D_n$ (right)}\label{HasseDn}\label{HasseCn}
\end{figure}

\begin{figure}
\begin{tikzpicture}
\node[scale=.8]   at (-6,0){
\begin{tikzcd}[column sep=small,]
 &&& \circ \arrow[d,   "\bb"]& &&&& \\ 
 &&& \circ \arrow[d,   "\bb_2"]& & &&&\\ 
 &&& \circ \arrow[d,   "\bb_3"]&  &&& &\\
 &&& \circ \arrow[dl,   swap]{}{\sroot} \arrow[dr, dash]{}{\bb_4}  & &&&&\\
&&\circ  \arrow[dr,   swap,  "\bb_4"] &  & \arrow[dl,   swap, "\sroot"] \circ  \arrow[dr,  "\bb_5"] & &&&\\
&& & \circ \arrow[dr,  "\bb_5"] \arrow[dl,  swap, "\bb_3"] &  & \circ \arrow[dr,  "\bb_6"]\arrow[dl,   swap, "\sroot"] &&&\\
&&\circ \arrow[dl, swap, "\bb_2"]\arrow[dr,  "\bb_5"]&& \circ\arrow[dl,  "\bb_3"]\arrow[dr,  "\bb_6"] &&\circ\arrow[dl,  "\sroot"]\arrow[dr,  "\aa"]&&\\
&\circ\arrow[dl,  swap,"\bb"]\arrow[dr,  "\bb_5"]&&\circ\arrow[dl,  swap,"\bb_2"]\arrow[d,  "\bb_4"]\arrow[dr,  "\bb_6"]&&\circ\arrow[dl,  "\bb_3"]\arrow[dr,  "\aa"]&&\circ\arrow[dl,  "\sroot"]&\\
\circ\arrow[dr, "\bb_5"]&& \circ\arrow[dl, swap,"\bb"]\arrow[d,  "\bb_4"]\arrow[dr,  "\bb_6"]&\circ\arrow[dl,  crossing over,"\bb_2"]& \circ\arrow[dl,  "\bb_2"]\arrow[d,  "\bb_4"]\arrow[dr,  "\aa"]&&\circ\arrow[dl,  "\bb_3"]&&\\
&\circ& \circ&\circ\arrow[d, dashed]& \circ\arrow[ul, crossing over, "\bb_6"]&\circ&&&\\
& & & {}  &   &    &  & &
\end{tikzcd}};
\node[scale=.8] at (-6,-4.5){$\hasse{\EE}{\peso_{\bb}}$ up to level 10};

\node[scale=.8] at (0,0){
\begin{tikzcd}[column sep=small,]
& & \circ \arrow[d, dash,"\sroot"]& & &&\\ 
& & \circ \arrow[d, dash,"\bb_3"]& & &&\\
& & \circ\arrow[dl, dash,swap, "\bb_{2}"]\arrow[dr,dash,"\bb_4"]& & &&\\
 &   \circ\arrow[dl, dash,swap, "\bb"]\arrow[dr,dash,"\bb_4"] & &\circ\arrow[dl, dash, "\bb_2"]\arrow[dr,dash,"\bb_5"]& &&\\
\circ\arrow[dr,dash,"\bb_4"] & &  \circ\arrow[dl,dash,swap,"\bb"]\arrow[d,dash,"\bb_3"]\arrow[dr,dash,"\bb_5"] & & \circ\arrow[dl,dash,"\bb_2"]\arrow[dr,dash,"\bb_6"]&&\\
&\circ\arrow[dr,dash,near end,swap,"\bb_5"]\arrow[d,dash,near start,swap, "\bb_3"] &\circ\arrow[dl,dash,crossing over, near start, "{\scriptscriptstyle\bb}" description] &\circ\arrow[dl,dash,near end, "{\scriptscriptstyle\bb}"]\arrow[d,dash,"\bb_3"]\arrow[dr,dash,"\bb_6"]& &\circ\arrow[dl, dash, "\bb_2"]\arrow[dr, dash, "\aa"]&\\
&\circ &\circ &\arrow[d,dashed]\arrow[ul,dash,near end,swap, crossing over, "{\scriptscriptstyle \bb_5}" description]\circ&\circ& &\circ\\
&& &{}&&&
\end{tikzcd}
};\node[scale=.8] at (0,-4.5){$\hasse{\EE}{\peso_{\sroot}}$ up to level 7};
%

\node[scale=1] at (-3,-5){\dynkin[labels={\bb,\sroot,\bb_2,,,,\bb_6,\aa},scale=1.4] E8};

\end{tikzpicture}\caption{Hasse for extremal roots of the $\EE$ family}\label{HasseE}
\end{figure}

\begin{figure}
\begin{tikzpicture}
\node[scale=.8] at (0,0.6){
\begin{tikzcd}[column sep=small]
 && \circ \arrow[d,  "\aa"]& && \\ 
 & & \circ \arrow[d,   "\bb_6"]& & &\\
& & \vdots\arrow[d,  "\bb_3"]&  && \\
 & & \circ\arrow[dl,  swap, "\bb_{2}"]\arrow[dr, "\sroot"]& && \\
 &   \circ\arrow[dl,  swap, "\bb"]\arrow[dr, "\sroot"] & &\circ\arrow[dl,   "\bb_2"] &&\\
 \circ\arrow[dr,  swap, "\sroot"]& &\circ \arrow[dl,  swap,"\bb"]\arrow[dr,  swap,"\bb_3"]& && \\
& \circ\arrow[dr,  swap, "\bb_3"]& &\circ\arrow[dl,  swap,"\bb"]\arrow[dr,  swap,"\bb_4"]&& \\
& & \circ\arrow[dr,  swap, "\bb_4"]\arrow[dl,  swap, "\bb_2"]& &\circ\arrow[dl,  swap,"\bb"]\arrow[dr,  swap,"\bb_5"] &\\
& \circ& &\arrow[d,dashed]\circ&&\circ\\
& & &{}& &
\end{tikzcd}};\node[scale=1] at (0,-4){first levels of $\hasse{\EE}{\peso_{\aa}}$};

\end{tikzpicture}\caption{Hasse for extremal roots of the $\EE$ family, continued}
\end{figure}

\begin{figure}
\begin{tikzpicture}
\node[scale=.8]  (a) at (-5,0){
\begin{tikzcd}[column sep=small,]
&&  & \circ \arrow[d,   "\bb"]& & \\ 
&& & \circ \arrow[d,   "\nu"]& & \\ 
&& & \circ \arrow[d,   "\sroot"]&  & \\
&& & \circ \arrow[dl,   swap, "\sroot"] \arrow[dr,  "\aa"]  & &\\
&&  \circ  \arrow[dr,   swap,  "\aa"]\arrow[dl,   swap,  "\nu"] &  & \circ \arrow[dl,   "\sroot"]  & \\
& \circ\arrow[dl, "\bb"] \arrow[dr, "\aa"]& & \circ \arrow[dr,  "\aa"] \arrow[dl,  swap, "\nu"] &  &\\
\circ\arrow[dr, "\aa"] &  & \circ \arrow[dl,  swap, "\bb"]\arrow[d, "\sroot"] \arrow[dr,   "\aa"]&   &  \circ \arrow[dl,   "\nu"]    & \\
 &  \circ\arrow[d, swap,"\sroot"] \arrow[dr, near start, "\aa" description] &  \circ\arrow[dl, near end, crossing over,"\bb" description]  & \circ\arrow[d, "\sroot"]\arrow[dl, near end,swap,"\bb" description]     &  &\\
 &  \circ\arrow[dl, "\nu"]\arrow[dr, "\aa"]  &  \circ\arrow[d, "\sroot"]  & \circ\arrow[ul, crossing over,swap, near end, "\aa" description]\arrow[dl, "\bb"] \arrow[dr, "\sroot"]    &  &\\
\circ \arrow[dr, "\aa"] \arrow[d, "\sroot"]    &  &  \circ  \arrow[dl, "\nu"] \arrow[dr, "\sroot"]   &      &  \circ \arrow[dl, "\bb"]  &\\
\circ \arrow[drr, swap,"\aa"] &  \circ \arrow[dr, "\sroot"] &    & \circ   \arrow[dl, "\nu"]  &   &\\
 &   &  \circ\arrow[d, dashed]  &  &  &\\
 &   &  {}  &    &  &
\end{tikzcd}};
\node[scale=.8] at (-5,-7.2){$\hasse{\F_4}{\peso_{\bb}}$ up to level 12};

\node[scale=.8] at (-1,0.6){
\begin{tikzcd}[column sep=small]
 && \circ \arrow[d,  "\aa"]& && \\ 
 & & \circ \arrow[d,   "\eps"]& & &\\
& & \circ\arrow[d,  "\nu"]&  && \\
 & & \circ\arrow[dl,  swap, "\eps"]\arrow[dr, "\sroot"]& && \\
 &   \circ\arrow[dl,  swap, "\aa"]\arrow[dr, "\sroot"] & &\circ\arrow[dl,   "\eps"] &&\\
 \circ\arrow[dr,  swap, "\sroot"]& &\circ \arrow[dl,  swap,"\aa"]\arrow[dr,  swap,"\nu"]& && \\
& \circ\arrow[dr,  swap, "\nu"]& &\circ\arrow[dl,  swap,"\aa"]\arrow[dr,  swap,"\eps"]&& \\
& & \circ\arrow[dr,  swap, "\eps"]& &\circ\arrow[dl,  swap,"\aa"] &\\
& & &\circ&&\\
& & \circ\arrow[ur, "\eps"]& &\circ\arrow[ul,"\aa"] &\\
& \circ\arrow[ur, "\nu"]& &\circ\arrow[ul,"\aa"]\arrow[ur,"\eps"]&& \\
 \circ\arrow[ur, "\sroot"]& &\circ \arrow[ul,"\aa"]\arrow[ur,"\nu"]& && \\
 &   \circ\arrow[ul,  "\aa"]\arrow[ur, "\sroot"] & &\circ\arrow[ul,   "\eps"] &&\\
 & & \circ\arrow[ul,"\eps"]\arrow[ur, "\sroot"]& && \\
& & \circ\arrow[u,  "\nu"]&  && \\
& & \circ \arrow[u,   "\eps"]& & &\\
 && \circ \arrow[u,  "\aa"]& && 
\end{tikzcd}};
\node[scale=.8] at (-1,-7.2){$\hasse{\F_4}{\peso_{\aa}},$ it has 17 levels. };

\node[scale=.8] at (-3,-8){\dynkin[labels={\bb,\nu,\sroot,\aa},scale=1.4] F4};

\node[scale=.8]  at (2.5,0){
\begin{tikzcd}[column sep=small,]
 & \circ \arrow[d, "\bb"]&  \\ 
 & \circ \arrow[d, dash, "\aa"]&  \\ 
 & \circ \arrow[d, dash, "\aa"]&   \\
 & \circ \arrow[dl, dash, swap, "\bb"] \arrow[dr, dash,"\aa"]  & \\
  \circ  \arrow[dr, dash, swap,  "\aa"] &  & \circ \arrow[dl, dash, "\bb"]   \\
  & \circ \arrow[dr,dash, "\aa"] \arrow[dl,dash, swap, "\bb"] &   \\
 \circ \arrow[dr, dash, swap, "\aa"] & & \circ \arrow[dl, dash,"\bb"]   \\
  & \circ \arrow[d,dash,"\aa"]  &       \\
  & \circ \arrow[d,dash,"\aa"]  &       \\
  & \circ \arrow[d,dash,"\bb"]  &       \\
  & \circ&
\end{tikzcd}};
\node[scale=.8] at (2.5,-7.2){$\hasse{\Ge_2}{\peso_{\bb}}$};
\node[scale=.8] at (4.5,0){
\begin{tikzcd}[column sep=small]
 & \circ \arrow[d, dash]& & \\ 
  & \circ \arrow[d, dash]& & \\
 & \circ\arrow[d, dash]&  & \\
 & \circ \arrow[d, dash]& & \\
 & \circ \arrow[d, dash]& & \\
 & \circ \arrow[d, dash]& & \\
 & \circ  & &
\end{tikzcd}};\node[scale=.8] at (4.5,-7.2){$\hasse{\Ge_2}{\peso_{\aa}}$};

\node[scale=.8] at (3.5,-8){\dynkin[labels={\bb,\aa},scale=1.4] G2};

\end{tikzpicture}\caption{Hasse for extremal roots of $\F_4$ (left) and $\Ge_2$ (right)}\label{HasseF}\label{HasseG}
\end{figure}

\begin{figure}[h]
\begin{tikzpicture}
\node[scale=1] at (-4,0.6){
\begin{tikzcd}[column sep=small]
 && \circ \arrow[d,  "\aa"]& && \\ 
 & & \circ \arrow[d,   "\bb_4"]& & &\\
& & \circ\arrow[d,  "\bb_3"]&  && \\
 & & \circ\arrow[dl,  swap, "\bb_{2}"]\arrow[dr, "\sroot"]& && \\
 &   \circ\arrow[dl,  swap, "\bb"]\arrow[dr, "\sroot"] & &\circ\arrow[dl,   "\bb_2"] &&\\
 \circ\arrow[dr,  swap, "\sroot"]& &\circ \arrow[dl,  swap,"\bb"]\arrow[dr,  swap,"\bb_3"]& && \\
& \circ\arrow[dr,  swap, "\bb_3"]& &\circ\arrow[dl,  swap,"\bb"]\arrow[dr,  swap,"\bb_4"]&& \\
& & \circ\arrow[dr,  swap, "\bb_4"]\arrow[dl,  swap, "\bb_2"]& &\circ\arrow[dl,  swap,"\bb"]\arrow[dr,  swap,"\aa"] &\\
& \circ& &\circ&&\circ\\
& & \circ\arrow[ur, "\bb_2"]\arrow[ul, "\bb_4"]& &\circ\arrow[ul,"\aa"]\arrow[ur,"\bb"] &\\
& \circ\arrow[ur, "\bb_3"]& &\circ\arrow[ul,"\aa"]\arrow[ur,"\bb_2"]&& \\
 \circ\arrow[ur, "\sroot"]& &\circ \arrow[ul,"\aa"]\arrow[ur,"\bb_3"]& && \\
 &   \circ\arrow[ul,  "\aa"]\arrow[ur, "\sroot"] & &\circ\arrow[ul,   "\bb_4"] &&\\
 & & \circ\arrow[ul,"\bb_{4}"]\arrow[ur, "\sroot"]& && \\
& & \circ\arrow[u,  "\bb_3"]&  && \\
& & \circ \arrow[u,   "\bb_4"]& & &\\
 && \circ \arrow[u,  "\aa"]& && 
\end{tikzcd}};

\node[scale=1] at (-4.5,-9){$\hasse{\EE_6}\aa\approx\hasse{\EE_6}\bb$};
\node[scale=1] at (-4.5,-10){\dynkin[labels={\bb,\sroot,\bb_2,\bb_3,\bb_4,\aa},scale=1.4] E6};

\node[scale=1] (a) at (0,1){$\begin{dynkinDiagram}E6\draw[thick] (root 1) to [out=-45, in=-135] (root 6);\draw[thick] (root 3) to [out=-45, in=-135] (root 5);\end{dynkinDiagram}$};

\node[scale=1] (b) at (0,-1) {$\dynkin[scale=1.4] F4$};
\draw[scale=1,->] (a) -- (b) node[midway,left] {$f$};

\node[scale=1] at (4,0.6){
\begin{tikzcd}[column sep=small]
 && \circ \arrow[d,  "\aa"]& && \\ 
 & & \circ \arrow[d,   "\eps"]& & &\\
& & \circ\arrow[d,  "\nu"]&  && \\
 & & \circ\arrow[dl,  swap, "\eps"]\arrow[dr, "\sroot"]& && \\
 &   \circ\arrow[dl,  swap, "\aa"]\arrow[dr, "\sroot"] & &\circ\arrow[dl,   "\eps"] &&\\
 \circ\arrow[dr,  swap, "\sroot"]& &\circ \arrow[dl,  swap,"\aa"]\arrow[dr,  swap,"\nu"]& && \\
& \circ\arrow[dr,  swap, "\nu"]& &\circ\arrow[dl,  swap,"\aa"]\arrow[dr,  swap,"\eps"]&& \\
& & \circ\arrow[dr,  swap, "\eps"]& &\circ\arrow[dl,  swap,"\aa"] &\\
& & &\circ&&\\
& & \circ\arrow[ur, "\eps"]& &\circ\arrow[ul,"\aa"] &\\
& \circ\arrow[ur, "\nu"]& &\circ\arrow[ul,"\aa"]\arrow[ur,"\eps"]&& \\
 \circ\arrow[ur, "\sroot"]& &\circ \arrow[ul,"\aa"]\arrow[ur,"\nu"]& && \\
 &   \circ\arrow[ul,  "\aa"]\arrow[ur, "\sroot"] & &\circ\arrow[ul,   "\eps"] &&\\
 & & \circ\arrow[ul,"\eps"]\arrow[ur, "\sroot"]& && \\
& & \circ\arrow[u,  "\nu"]&  && \\
& & \circ \arrow[u,   "\eps"]& & &\\
 && \circ \arrow[u,  "\aa"]& && 
\end{tikzcd}};

\node[scale=1] at (4,-9){$\hasse{\F_4}\aa$ };

\node[scale=1] at (4,-10){\dynkin[labels={\sroot,\nu,\eps,\aa}, scale=1.4] F4};

\end{tikzpicture}\caption{A surjective map $\T^f:\hasse{\EE_6}\aa\to\hasse{\F_4}\aa$ with labeling $f.$}\label{E6-F4}
\end{figure}



\bibliographystyle{plain}

\

\author{\vbox{\footnotesize\noindent 
	Andr\'es Sambarino\\
	CNRS - Sorbonne Universit\'e et Universit\'e Paris Cit\'e\\ IMJ-PRG\\ 
	4 place Jussieu 75005 Paris France\\
	\texttt{andres.sambarino@imj-prg.fr}
\bigskip}}

\end{document}